\newtheorem{theorem}{Theorem}[section]
\newtheorem{lemma}[theorem]{Lemma}
\newtheorem{corollary}[theorem]{Corollary}
\newtheorem{prop}[theorem]{Proposition}
\theoremstyle{definition}
\newtheorem{definition}[theorem]{Definition}
\newtheorem{remark}[theorem]{Remark}
\numberwithin{equation}{section}
\begin{document}

\title[]
{\Large Szlenk and $w^\ast$-dentability indices of C*-algebras}
\author{Lixin Cheng$^\dag$, Zhizheng Yu$^\sharp$ }
\address{$^\dag,  ^\sharp $:  School of Mathematical Sciences, Xiamen University, Xiamen, Fujian 361005, China}
\email{lxcheng@xmu.edu.cn\;\;(L. Cheng)}
\email{19020200156565@stu.xmu.edu.cn\;\;(Z. Yu)}

\thanks{$^\dag$ Support in partial
by NSFC, grant no. 11731010.}
\date{}
\maketitle

{\bf Abstract:}{\small 
~Let $\mathcal A$ be a infinite dimensional C*-algebra and $1<p<\infty$. We compute the Szlenk index of $\mathcal A$ and $L_p(\mathcal A)$, and show that $Sz(\mathcal A)=\Gamma'(i(\mathcal A))$ and $Dz(\mathcal A)=Sz(L_p(\mathcal A))=\omega Sz(\mathcal A)=\omega\Gamma'(i(\mathcal A))$, where $i(\mathcal A)$ is the noncommutative Cantor-Bendixson index, $\Gamma'(\xi)$ is the minimum ordinal number which is greater than $\xi$ of the form $\omega^\zeta$ for some $\zeta$ and we agree that $\Gamma'(\infty)=\infty$ and $\omega\cdot\infty=\infty$. As an application, we compute the Szlenk index [respectively, $w^\ast$-dentability index] of a C*-tensor product $\mathcal A\otimes_\beta\mathcal B$ of non-zero C*-algebras $\mathcal A$ and $\mathcal B$ in terms of $Sz(\mathcal A)$ and $Sz(\mathcal B)$ [respectively, $Dz(\mathcal A)$ and $Dz(\mathcal B)$]. When $\mathcal A$ is a separable C*-algebra, we show that there exists $a\in \mathcal A_h$ such that $Sz(\mathcal A)=Sz(C^\ast(a))$ and $Dz(\mathcal A)=Dz(C^\ast(a))$, where $C^\ast(a)$ is the C*-subalgebra of $\mathcal A$ generated by $a$.
\par
{\bf Key words:}
Szlenk index; $w^\ast$-dentability index; C*-algebras; Scattered; Banach spaces.\par

2020 Mathematics Subject Classification.
Primary 46B20; 46L05. Secondary 46L06.

\section[Introduction]{\large Introduction}
In this paper we deal with the Szlenk index of C*-algebra. Let us first recall the definition of the $Szlenk~index$ and the $w^\ast$-$dentability~index$. Suppose $X$ is a Banach space and $K$ is weak*-compact subset of $X^\ast$.
For $\varepsilon>0$, we let
\[
s_\varepsilon(K)=\{x^\ast\in K:{\rm for~any~}w^\ast-{\rm neighborhood}~V~{\rm of}~x^\ast, {\rm diam}(V\cap K)>\varepsilon\}.
\]
We define the  transfinite derived sets by $s_\varepsilon^0(K)=K$, for $\alpha\geq1$
\[
s_\varepsilon^\alpha(K)=\left\{
\begin{array}{rcl}
s_\varepsilon(s_\varepsilon^\beta(K))~~~ &  & {if~\alpha=\beta+1,}\\
\bigcap_{\beta<\alpha}s_\varepsilon^\beta(K)&  & {otherwise.}\\
\end{array}
\right.
\]
We define $Sz_\varepsilon(K)=\min\{\alpha:s_\varepsilon^\alpha(K)=\emptyset\}$ if this class of ordinals is non-empty, and we write $Sz_\varepsilon(K)=\infty$ otherwise. We agree to the convention that $\xi<\infty$ for any ordinal $\xi$ (It's easy to see $Sz_\varepsilon(K)$ is never a limit ordinal when $K\neq\emptyset$.) We let
 \[
 Sz(K)=\sup_{\varepsilon>0}Sz_\varepsilon(K),
 \]
 with the convention that the supremum is $\infty$ if $Sz_\varepsilon(K)=\infty$ for some $\varepsilon$. Last the $Szlenk~index~of~X$ is defined by 
  \begin{equation}\label{1.1}
  Sz(X)=Sz(B_{X^\ast}).
  \end{equation}
When $X$ is a complex Banach space, we let $X_{\mathbb R}$ be the real restriction of $X$. It's easy to see that $Sz(X_{\mathbb R})=Sz(X)$. Now, we recall that a w*-open slice of $X^\ast$ is a subset of $X^\ast$ of the form $\{y\in X^\ast:{\rm Re}~y^\ast(x)>a\}$ for some $a\in\mathbb R$ and $x\in X$. we let
\[
d_\varepsilon(K)=\{x^\ast\in K:{\rm for~any~}w^\ast{\rm -open~slice}~V~{\rm contains}~x^\ast, {\rm diam}(V\cap K)>\varepsilon\}.
\]
We define $d_\varepsilon^\alpha(K)$, $Dz_\varepsilon(K)$, $Dz(K)$ and $Dz(X)$ similarly.

The Szlenk index was first introduced by W. Szlenk \cite{Szlenk} in separable Banach space, in order to show that there is no separable reflexive Banach space universal for the class of all separable reflexive Banach space. The definition of Szlenk index in \cite{Szlenk} is slightly different than (\ref{1.1}), but they are the same for $X$ containing no isomorph of $\ell_1$, can see \cite[Prop 3.3]{Lancien1993}. The Szlenk and $w^\ast$-dentability indices have become an important tool in Banach space theory. A survey \cite{Lancien2006} by G. Lancien for the Szlenk and $w^\ast$-dentability indices and their applications. A remarkable conclusion of the Szlenk index is that it perfectly determines the isomorphism classes of $C(K)$ with $K$ is an infinite compact metrizable space, which can obtain by F. Bessage and A. Pe{\l}zy\'nski \cite{BP}, A.A. Miljutin \cite{Miljutin}, and C. Samuel \cite{Samuel}. On the other hand, A important conclusion of the $w^\ast$-dentability index is that $Dz(X)=\omega$ if and only if $X$ is superreflexive for the nonzero Banach space $X$ \cite{Lancien1995,HMVZ}. The conclusion of C. Samuel is that $Sz(C([0,\omega^{\omega^\alpha}]))=\omega^{\alpha+1}$ whenever $\alpha$ is a countable ordinal. The first extension of C. Samuel's result is by G. Lancien \cite{Lancien1996}, he proved that $Sz(C(K))=\omega^{\alpha+1}$ when $K$ is a compact Hausdorff space satisfying $K^{(\omega^\alpha)}\neq\emptyset$ and $K^{(\omega^{\alpha+1})}=\emptyset$ with $\alpha<\omega_1$. A beautiful proof of C. Samuel's result was given by P. H\'ajek and G. Lancien in \cite{HL2007}, and they computed the Szlenk index of $C([0,\alpha])$ for the ordinal $\omega_1\leq\alpha<\omega_1\omega$, is that $\omega_1\omega$, in \cite{HL2007}. In \cite{HLP2009}, P. H\'ajek, G.Lancien and A. Proch\'azka first compute the $w^\ast$-dentability index of the spaces $C(K)$ where K is a countable compact metric space, namely $Dz(C([0,\omega^{\omega^\alpha}]))=\omega^{1+\alpha+1}$, whenever $0\leq\alpha<\omega_1$. More generally, $Dz(C(K))=\omega^{1+\alpha+1}$ if $K^{(\omega^\alpha)}\neq\emptyset$ and $K^{(\omega^{\alpha+1})}=\emptyset$ with $\alpha<\omega_1$. P.A.H Brooker \cite{B2013} continues to extend the results of C. Samuel, P. H\'ajek, G. Lancien and A. Proch\'azka, showing that $Sz(C([0,\alpha]))=\omega^{\gamma+1}$ and $Dz(C([0,\alpha]))=\omega^{1+\gamma+1}$, where $\gamma$ satisfying $\omega^{\omega^\gamma}\leq\alpha<\omega^{\omega^{\gamma+1}}$. The computation of Szlenk and $w^\ast$-dentability indices for general $C(K)$ spaces with $K$ is a compact Hausdorff space is solved by R.M. Causey in \cite{Causey2017,Causey2017b}. He proved that $Sz(C(K))=\Gamma(i(K))$ and $Dz(C(K))=Sz(L_p(C(K)))=\omega Sz(C(K))=\omega\Gamma(i(K))$, see Theorem \ref{3.1}. Especially to deserve to be mentioned, in \cite{Causey2017}, R.M. Causey proved a beautiful conclusion: If $L$ is a weak* compact set in dual space of some Banach space, then $Sz(\overline{co}^\ast(L))=\Gamma(Sz(L))$. The previous conclusion of the Szlenk index of $C(K)$ is a corollary of this result.

The main purpose of this paper is to compute the Szlenk and $w^\ast$-dentability indices of C*-algebras. In particular, it is a extend the previous result of $C(K)$ spaces with $K$ is compact Hausdorff topology space to the noncommutative C*-algebra. For arbitrary infinite dimension C*-algebra $\mathcal A$, Ghasemi and Koszmider \cite{GK} introduce the noncommutative Cantor-Bendixson index $i(\mathcal A)$ (If $\mathcal A$ is scattered C*-algebra, then $i(\mathcal A)=i(\widehat{\mathcal A})$, see Corollary \ref{4.7}). We show that
\[
Sz(\mathcal A)=\Gamma'(i(\mathcal A))~and~Dz(\mathcal A)=Sz(L_p(\mathcal A))=\omega Sz(\mathcal A)=\omega\Gamma'(i(\mathcal A))
\]
where, $1<p<\infty$, $\Gamma'(\xi)$ is the minimum ordinal number which is greater than $\xi$ of the form $\omega^\zeta$ for some $\zeta$ and we agree to the convention $\Gamma'(\infty)=\infty$ and $\omega\cdot\infty=\infty$. As a application, we show that the Szlenk index [respectively, $w^\ast$-dentability index] of a C*-tensor product $\mathcal A\otimes_\beta\mathcal B$ of non-zero C*-algebras $\mathcal A$ and $\mathcal B$ is $\max\{Sz(\mathcal A),Sz(\mathcal B)\}$ [respectively, $\max\{Dz(\mathcal A),Dz(\mathcal B)\}$]. Inspired by the proof method of H.X. Lin\cite[Th. 5.4]{Lin}, as another application, we show that for arbitrary separable C*-algebra $\mathcal A$, there exists $a\in \mathcal A_h$ such that $Sz(\mathcal A)=Sz(C^\ast(a))$ and $Dz(\mathcal A)=Dz(C^\ast(a))$, where $C^\ast(a)$ is the C*-subalgebra of $\mathcal A$ generated by $a$. After completing this work, we found R.M. Causey provided a short calculation of the Szlenk index of $C(K)$ spaces using the Grasberg norm, see \cite{Causeyarxiv}.

 \section{\large Preliminaries}
Let $X$ be a Banach space, if $Sz(X)<\infty$, it is easy to see that it equivalent to every bounded non-empty subset of $X^\ast$ has non-empty weak* open set of  arbitrarily small diameter. Recall that a real Banach space $X$ is said to be $Asplund$ if every convex continuous function defined on a convex open subset $U$ of $X$ is Fréchet differentiable on a dense $G_\delta$ subset of $U$. If $X$ is a complex Banach space, we say that it is $Asplund$ if $X_{\mathbb R}$, the real restriction of $X$, is Asplund. In fact, the Szlenk index is relevant to the classical theory of Asplund spaces. The following statement summarizes some equivalent properties (e.g. see the book of R. Deville, G. Godefroy and V. Zizler \cite{DGZ} for a complete reference).
\begin{theorem}\label{2.1}
Let $X$ be a Banach space. The following assertions are equivalent:
\begin{itemize}
\item [(a)] $X$ is an Asplund space.
\item [(b)] $Sz(X)<\infty$.
\item [(c)] Every bounded non-empty subset of $X^\ast$ has non-empty weak* open set of  arbitrarily small diameter.
\item [(d)] $Dz(X)<\infty$
\item [(e)] Every bounded non-empty subset of $X^\ast$ has non-empty weak*-slices of  arbitrarily small diameter.
\item [(f)] Every separable subspace of $X$ has a separable dual.
\item [(g)] $X^\ast$ has the Radon-Nikodym property.
\end{itemize}
\end{theorem}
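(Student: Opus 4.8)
The plan is to organize the seven conditions into a web of implications, isolating the ``soft'' equivalences that come directly from the derivation definitions from the ``hard'' classical core of Asplund space theory, and then to close the cycle through one genuinely nontrivial bridge.

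First I would dispatch (b)$\Leftrightarrow$(c) and (d)$\Leftrightarrow$(e) by unwinding the definitions. For a w*-compact set $K$ one has $s_\varepsilon(K)\neq K$ exactly when $K$ admits a non-empty relatively w*-open subset of diameter at most $\varepsilon$, because $K\setminus s_\varepsilon(K)$ is precisely the union of all such small relatively open pieces; the same holds for $d_\varepsilon$ with w*-open slices in place of w*-open sets. Since $s_\varepsilon$ and $d_\varepsilon$ are monotone under inclusion and strictly decreasing as long as the current set still contains a small open piece, w*-compactness forces the transfinite derivation to stabilize, and its only possible fixed point is $\emptyset$; this gives finiteness of the index. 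A scaling argument using $Sz_\varepsilon(rB_{X^*})=Sz_{\varepsilon/r}(B_{X^*})$ (and the analogue for $Dz$) transfers the fragmentation property between $B_{X^*}$ and an arbitrary bounded subset, and letting $\varepsilon$ vary over all positive reals produces the ``arbitrarily small diameter'' formulation. As every w*-open slice is in particular w*-open, (e)$\Rightarrow$(c) is immediate.

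Next I would invoke the classical theory, citing \cite{DGZ}, to obtain (a)$\Leftrightarrow$(c), (a)$\Leftrightarrow$(f) and (a)$\Leftrightarrow$(g). Here (a)$\Leftrightarrow$(c) is the characterization of Asplund spaces by norm-fragmentability of the w*-compact subsets of the dual; (a)$\Leftrightarrow$(f) is the separable determination of the Asplund property; and (a)$\Leftrightarrow$(g) is the Namioka--Phelps--Stegall theorem that $X$ is Asplund if and only if $X^*$ has the Radon--Nikodym property. At this point every condition except (d) and (e) has been tied to (a).

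The crux, and the step I expect to be the main obstacle, is therefore to close the loop by proving (c)$\Rightarrow$(e) (equivalently (b)$\Rightarrow$(d)): one must upgrade fragmentation by arbitrary relatively w*-open sets to fragmentation by w*-open \emph{slices}. This is exactly the passage from norm-fragmentability of $X^*$ (or, via (a)$\Leftrightarrow$(g), the RNP of $X^*$) to w*-dentability of its bounded subsets, and the difficulty is that the RNP a priori only supplies slices cut out by functionals from the bidual $X^{\ast\ast}$, whereas (e) demands slices cut out by $X$ itself. I would handle this by exploiting that $X^*$ is a dual space: on a w*-compact set a small relatively w*-open piece can be shrunk, through a convex-separation / Bishop--Phelps type argument, to a w*-slice of comparably small diameter, so that norm-fragmentation already forces w*-dentability. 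Combining this bridge with the soft equivalences of the first paragraph and the classical identifications of the second then yields the full chain of equivalences.
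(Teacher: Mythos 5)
You are comparing against a moving target here: the paper does not prove Theorem \ref{2.1} at all — it presents it as a summary of classical Asplund-space theory with a pointer to \cite{DGZ} — so any correct proof is ``different from the paper's'' by default, and the right question is whether your reconstruction matches the standard argument. Your architecture does: the soft equivalences (b)$\Leftrightarrow$(c) and (d)$\Leftrightarrow$(e) by unwinding the derivations, the classical identifications (a)$\Leftrightarrow$(c), (a)$\Leftrightarrow$(f), (a)$\Leftrightarrow$(g) from \cite{DGZ} and \cite{NP}, and a single hard bridge (c)$\Rightarrow$(e) is exactly the standard decomposition. Two small repairs in the soft part: to pass from $w^\ast$-compact sets to arbitrary bounded sets $A$ in (b)$\Rightarrow$(c) and (d)$\Rightarrow$(e) you need the closure trick (a $w^\ast$-open set or slice meeting $\overline{A}^{w^\ast}$ meets $A$, and ${\rm diam}(V\cap A)\leq{\rm diam}\bigl(V\cap\overline{A}^{w^\ast}\bigr)$), and the stabilization of the derivation is a cardinality fact rather than a compactness fact — indeed you do not even need stabilization: if $s_\varepsilon(K)=K$ then $s_\varepsilon^\alpha(K)=K$ for all $\alpha$, contradicting finiteness of the index, so $K\setminus s_\varepsilon(K)\neq\emptyset$ already at the first step.

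The genuine soft spot is your mechanism for the crux, which is wrong as literally stated: a small relatively $w^\ast$-open piece of a $w^\ast$-compact set cannot in general be ``shrunk'' to a $w^\ast$-slice of comparable diameter, because any slice through that piece may pick up points of the set located far away; the small slice whose existence (e) asserts typically sits elsewhere. The classical argument (essentially Namioka--Phelps) runs: reduce to $C=\overline{co}^{w^\ast}(A)$, since a small slice of $C$ traces to a small non-empty slice of $A$ ($\sup_A x=\sup_C x$); use \emph{hereditary} fragmentation — pieces of auxiliary subsets, not just of $C$ — to get $V$ with ${\rm diam}(V\cap C)\leq\varepsilon$; then if $D:=\overline{co}^{w^\ast}(C\setminus V)\neq C$, separate a point of $C\setminus D$ from $D$ by a $w^\ast$-continuous functional, i.e.\ an element of $X$ (this, not Bishop--Phelps, is where the dual structure enters), obtaining a slice of $C$ disjoint from $D$ and hence of diameter at most $\varepsilon$. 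But when $D=C$ this separation is unavailable, and closing that case requires the Bourgain ``super-lemma'' on slices of $\overline{co}(A\cup B)$ (or the Asplund--Namioka averaging argument) — a genuinely nontrivial idea that your gesture at ``convex separation / Bishop--Phelps'' does not supply. Alternatively you can bypass the geometry entirely and prove (a)$\Rightarrow$(e) directly: the support function $\sigma_A(x)=\sup_{f\in A}f(x)$ is convex and continuous on $X$, hence Fréchet differentiable at some $x_0$ since $X$ is Asplund, and the Šmulyan-type criterion converts Fréchet differentiability of $\sigma_A$ at $x_0$ into $w^\ast$-slices of $A$ determined by $x_0$ of arbitrarily small diameter; this is shorter, avoids the super-lemma, and is essentially the route taken in \cite{DGZ}.
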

A topological space $T$ is said to be $scattered$ if each of its nonempty (closed) subsets has a relative isolated point. Let us recall the definition of the Cantor-Bendixson derivative of the topological space. For topological space $T$, and for every ordinal number $\alpha$, we define the $\alpha$-$th~derived~set~T^{(\alpha)}$ by transfinite induction: $T^{(0)}=T$; $T^{(1)}=T'=$ the set of all cluster points of $T$;
\[
T^{(\alpha)}=\left\{
\begin{array}{rcl}
\left(T^{(\beta)}\right)'~~~ &  & {if~\alpha=\beta+1,}\\
\bigcap_{\beta<\alpha}T^{(\beta)}&  & {otherwise.}\\
\end{array}
\right.
\]
We let
\[
i(T)=\min\{\alpha:T^{(\alpha)}=\emptyset\},
\]
if this class of ordinals is non-empty, and we write $i(T)=\infty$ otherwise. It's easy to see that $i(T)<\infty$ if and only if $T$ is scattered.

A $C(K)$ space, where $K$ is a compact Hausdorff space, is Asplund if and only if $K$ is a scattered, which was originally shown by I. Namioka and R.R. Phelps \cite{NP}. R.M. Causey's result offers a direct proof of this conclusion. The following theorem:
\begin{theorem}\label{2.2}
Let $K$ is a compact Hausdorff space. Then the following condition are equivalent:
\begin{itemize}
\item [(a)] $K$ is scattered.
\item [(b)] $i(K)<\infty$.
\item [(c)] $Sz(C(K))<\infty$
\item [(d)] $Dz(C(K))<\infty$
\item [(g)] $C(K)$ is Asplund space.
\end{itemize}
\end{theorem}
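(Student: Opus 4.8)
The plan is to reduce everything to the general Asplund-space dichotomy of Theorem \ref{2.1} applied to $X=C(K)$. The equivalences (c) $\Leftrightarrow$ (g) $\Leftrightarrow$ (d) are then immediate, since Theorem \ref{2.1} asserts that for any Banach space $X$ the conditions $Sz(X)<\infty$, $Dz(X)<\infty$ and ``$X$ is Asplund'' are mutually equivalent. Likewise (a) $\Leftrightarrow$ (b) is immediate from the definition of $i(K)$: if $K$ is scattered then each Cantor--Bendixson derivative strictly discards the relatively isolated points it contains, so by a cardinality argument the transfinite sequence $K^{(\alpha)}$ must terminate at $\emptyset$, giving $i(K)<\infty$; conversely, were $K$ non-scattered, some nonempty closed $F\subseteq K$ would satisfy $F'=F$, forcing $K^{(\alpha)}\supseteq F\neq\emptyset$ for all $\alpha$ and hence $i(K)=\infty$. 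Thus the whole theorem collapses to linking scatteredness of $K$ with the Asplund property of $C(K)$, that is, to proving (a) $\Leftrightarrow$ (g).

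For (a) $\Rightarrow$ (g) I would use the dual description of $C(K)$. When $K$ is scattered and compact, every regular Borel measure on $K$ is purely atomic; I would prove this by transfinite induction along the derived sets, observing that the relatively isolated points of $K^{(\alpha)}$ are dense in $K^{(\alpha)}\setminus K^{(\alpha+1)}$ and that a nonatomic part of a measure would be forced to vanish on all these points and hence on all of $K$. Consequently $C(K)^\ast=M(K)$ is isometric to $\ell_1(K)$, which has the Radon--Nikodym property; by the equivalence (g) $\Leftrightarrow$ (a) of Theorem \ref{2.1}, $C(K)$ is Asplund.

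For the converse I would argue by contraposition. If $K$ is not scattered it contains a nonempty perfect closed subset $F$, and every nonempty perfect compact Hausdorff space carries a continuous surjection $q\colon F\to[0,1]$ (built from a Cantor scheme of nonempty relatively open subsets of $F$ with disjoint closures, mapping first onto the Cantor set and then onto $[0,1]$). The composition $f\mapsto f\circ q$ embeds $C([0,1])$ isometrically into $C(F)$, while the restriction map $C(K)\to C(F)$ is a surjection because $F$ is closed in $K$ (Tietze extension), so $C(F)$ is a quotient of $C(K)$. Since the Asplund property passes to both quotients and subspaces, and since $C([0,1])$ is a separable space with nonseparable dual $M([0,1])$ --- hence not Asplund by criterion (f) of Theorem \ref{2.1} --- it follows that $C(K)$ is not Asplund.

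The main obstacle is the two classical structural facts feeding the equivalence (a) $\Leftrightarrow$ (g): the atomicity of measures on scattered compacta (needed for (a) $\Rightarrow$ (g)) and the existence of a continuous surjection from a perfect set onto $[0,1]$ (needed for its converse). Both are standard but require the transfinite induction and the Cantor-scheme construction to be spelled out carefully; everything else is a direct invocation of Theorem \ref{2.1} and the definition of $i(K)$.
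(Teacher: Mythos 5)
Your proof is correct, but it takes a genuinely different route from the paper, which in fact offers no self-contained proof of this theorem at all: the paper treats it as a summary statement, getting (c)$\Leftrightarrow$(d)$\Leftrightarrow$(g) from Theorem \ref{2.1}, (a)$\Leftrightarrow$(b) from the definition of $i(K)$ (exactly as you do), and the bridge between scatteredness of $K$ and the Asplund property of $C(K)$ by citation --- originally Namioka--Phelps, or ``directly'' via Causey's quantitative Theorem \ref{4.1}, since $Sz(C(K))=\Gamma(i(K))$ makes $Sz(C(K))<\infty$ equivalent to $i(K)<\infty$ on the spot. You instead prove the bridge from scratch by the classical qualitative arguments: Rudin's theorem that regular Borel measures on scattered compacta are purely atomic, giving $C(K)^\ast\cong\ell_1(K)$ with the Radon--Nikodym property for (a)$\Rightarrow$(g), and the Pe{\l}czy\'nski--Semadeni surjection criterion (a perfect closed $F\subseteq K$ maps continuously onto $[0,1]$, so $C([0,1])$ embeds into the quotient $C(F)$ of $C(K)$) for the contrapositive. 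Each approach buys something: the paper's route is a one-line corollary of a much deeper index computation that is the theme of the paper anyway; yours is elementary, self-contained, and does not presuppose any Szlenk-index machinery beyond Theorem \ref{2.1} and the hereditary behaviour of Asplundness under subspaces and quotients (which, if you prefer to stay inside the paper, also follows from Proposition \ref{3.1}(b)). Two of your steps deserve slightly more care than your sketch gives them, though neither is a gap. First, the Cantor scheme produces a continuous surjection onto $2^{\omega}$ only from the closed limit set $A=\bigcap_n\bigcup_{|s|=n}\overline{U_s}\subseteq F$, not from all of $F$; you must then either extend by Tietze and compose with the retraction of $\mathbb R$ onto $[0,1]$ to recover surjectivity from $F$, or, more simply, note that $C(A)$ is itself a quotient of $C(F)$ and embed $C([0,1])$ there. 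Second, in the atomicity argument the decisive point is inner regularity: the relatively isolated points of $K^{(\alpha)}$ form a relatively open set all of whose compact subsets are finite, hence a null set for any nonatomic regular measure, which drives the transfinite induction $|\mu|(K\setminus K^{(\alpha)})=0$; merely observing that a nonatomic measure ``vanishes on these points'' is not by itself enough. An even quicker variant: the support of a nonzero nonatomic regular measure is a nonempty perfect closed set, contradicting scatteredness directly.
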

A. Pe{\l}czy\'nski and Z. Semadendi \cite{PS} provides several equivalent conditions for a compact Hausdorff space to be scattered. 

We know that for arbitrary compact Hausdorff topological space $K$ is scattered if and only if for any nonempty closed subset $L$ of $K$ has a relative isolated point, which equivalent to $C(L)$ has nonzero minimal projection for any nonempty closed subset $L$ of $K$. Which equivalent to $C(L)\cong C(K)/\mathcal J_L$, where $\mathcal J_L=\{f\in C(K):f|_L=0\}$, has nonzero minimal projection for any nonempty closed subset $L$ of $K$. Which equivalent to for every nonzero quotient algebra of $C(K)$, it has nonzero minimal projection. Based on this idea, we can immediately obtain the non-commutative generalization of a scattered compact Hausdorff space to C*-algebras. By $``C^\ast$-$algebra"$ we shall always mean a non-zero C*-algebra.
\begin{definition}
A projection $p$ in a C*-algebra $\mathcal A$ is called $minimal$ if $p\mathcal Ap=\mathbb Cp$. The set of minimal projections of $\mathcal A$ will be denoted by $At(\mathcal A)$. The C*-subalgebra of $\mathcal A$ generated by the minimal projections  of  $\mathcal A$ will  be denoted $\mathcal I^{At}(\mathcal A)$.
\end{definition}
\begin{definition}\label{scatteredc}
A C*-algebra $\mathcal A$ is called $scattered$ if for every nonzero quotient C*-algebra of $\mathcal A$, it has nonzero minimal projection.
\end{definition}
If $\mathcal A$ is a non-zero C*-algebra, we denote by $\widehat{\mathcal A}$ the set of unitary equivalence classes of non-zero irreducible representation of $\mathcal A$. If $(H,\pi)$ is a non-zero irreducible representation of $\mathcal A$ we denote its unitarily equivalence class in $\widehat{\mathcal A}$ by $[H,\pi]$, and we set $\ker[H,\pi]=\ker(\pi)$.

In 1963, J. Tomiyama studied C*-algebras with separable dual in \cite{Tomiyama}, which it is Asplund. The notion of a scattered C*-algebra was first introduced independently by H. Jensen \cite{Jensen1977} and M.L. Rothwell \cite{Rothwell}, which is different from the definition in Definition \ref{scatteredc} but equivalent to it, see \cite{GK}. In \cite{Chu1981}, C. Chu proved that a C*-algebra is scattered if and only if its dual has the Radon-Nikodym property, that is, it is Asplund. The reader is referred to \cite{Chu1982,Chu1994,Huruya,Jensen1978,Jensen1979,Kusuda2010,Kusuda2012,Lin,Wojtaszczyk}  for other equivalent conditions on scattered C*-algebras and some conclusions about scattered C*-algebras. The following statement summarizes which we will use the conclusions of the scattered C*-algebra.
\begin{prop}\label{2.5}
	Suppose that $\mathcal A$ is a C*-algebra. The following conditions are equivalent:
	\begin{itemize}
	    \item [(a)]
	$\mathcal A$ is scattered.
	    \item [(b)]
	$\mathcal A$ is Asplund space.
            \item [(c)]
        $Sz(\mathcal A)<\infty$.
           \item [(d)]
        $Dz(\mathcal A)<\infty$.
 	    \item [(e)]
	$\mathcal A$ is of type I and $\widehat{\mathcal A}$ is scattered.
	    \item [(f)]
	$\mathcal A$ dose not contains a copy of the C*-subalgebra $C_0((0,1])$.
	   \item [(g)]
	Every C*-subalgebra of $\mathcal A$ is scattered.
	  \item [(h)]
    Every quotient C*-algebra of $\mathcal A$ is scattered.
	\end{itemize}
\end{prop}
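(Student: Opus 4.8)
The plan is to treat the equivalence (a) $\Leftrightarrow$ (b) as the central hub and then distribute it to the remaining conditions. For the hub I would invoke Chu's theorem \cite{Chu1981}, which asserts that a C*-algebra is scattered if and only if its dual has the Radon--Nikodym property; combining this with the equivalence (a) $\Leftrightarrow$ (g) of Theorem \ref{2.1} (namely that $X^\ast$ has the RNP iff $X$ is Asplund) yields (a) $\Leftrightarrow$ (b) at once. Since $\mathcal A$ is in particular a (complex) Banach space and $Sz(\mathcal A)=Sz(\mathcal A_{\mathbb R})$, with the analogous identity for $Dz$, Theorem \ref{2.1} then delivers the chain (b) $\Leftrightarrow$ (c) $\Leftrightarrow$ (d) for free by applying it to the real restriction.

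For the hereditary conditions (g) and (h) I would argue by soft Banach-space stability. The implications (g) $\Rightarrow$ (a) and (h) $\Rightarrow$ (a) are immediate, since $\mathcal A$ is both a C*-subalgebra of itself and, modulo the zero ideal, a quotient C*-algebra of itself. For the converses, the Asplund property passes to closed subspaces and to quotient spaces: a C*-subalgebra of $\mathcal A$ is a closed subspace, and a quotient C*-algebra $\mathcal A/\mathcal J$ is a quotient Banach space, so each inherits (b) from $\mathcal A$ and is therefore scattered by the already-established (a) $\Leftrightarrow$ (b). This closes the loop (a) $\Leftrightarrow$ (g) $\Leftrightarrow$ (h) with no further C*-algebraic input.

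The genuinely noncommutative content sits in (e) and (f), and here I would lean on the structure theory rather than reprove it. For (a) $\Leftrightarrow$ (e), the characterization of scattered C*-algebras as exactly the type I C*-algebras with scattered spectrum $\widehat{\mathcal A}$ is due to Jensen and Rothwell \cite{Jensen1977,Rothwell} (see also \cite{GK}), and I would cite it directly. For (a) $\Leftrightarrow$ (f), the idea is that $C_0((0,1])$ is the canonical non-scattered obstruction: its spectrum $(0,1]$ is not scattered, so $C_0((0,1])$ is not Asplund and hence not scattered, whence by (g) no scattered $\mathcal A$ can contain an isometric copy of it; conversely, if $\mathcal A$ is not scattered then it fails to be type I or has non-scattered spectrum, and in either case the structure theory produces a commutative non-scattered, hence a copy of $C_0((0,1])$, inside $\mathcal A$.

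I expect the main obstacle to be the reverse direction of (a) $\Leftrightarrow$ (f) --- extracting an honest C*-subalgebra isomorphic to $C_0((0,1])$ from a non-scattered algebra. This is precisely the step that cannot be reduced to Banach-space arguments and requires either the type I / CCR structure theory or an explicit construction of a commutative non-scattered subalgebra. Since the present proposition is stated only to \emph{collect} facts that will be used later, the cleanest route is to cite the equivalence as established in \cite{Jensen1977,GK} together with the further equivalent conditions assembled in \cite{Chu1982,Huruya,Wojtaszczyk}, rather than to rebuild the argument here.
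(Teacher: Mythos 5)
Your proposal is correct and takes essentially the same route as the paper: Proposition \ref{2.5} is stated there without proof, as a summary of known results resting on exactly the sources you invoke --- Chu \cite{Chu1981} for (a)$\Leftrightarrow$(b) via the Radon--Nikodym property and Theorem \ref{2.1}, Jensen, Rothwell and Ghasemi--Koszmider \cite{Jensen1977,Rothwell,GK} for (e), and the further references \cite{Chu1982,Huruya,Kusuda2010,Kusuda2012,Lin,Wojtaszczyk} for the remaining equivalences --- and the soft glue you add (Theorem \ref{2.1} for (b)$\Leftrightarrow$(c)$\Leftrightarrow$(d), heredity of the Asplund property under closed subspaces and quotient spaces for (g) and (h)) is sound. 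You also correctly identify that the only step requiring genuine C*-structure theory is producing a copy of $C_0((0,1])$ inside a non-scattered algebra, which both you and the paper delegate to the cited literature.
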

The following theorem can be found in H. Jensen \cite{Jensen1978}:
\begin{theorem}[\text{H. Jensen}]\label{2.6}
Let $\mathcal A$ be a separable or type I C*-algebra. Then $\mathcal A$ is scattered if and only if $\widehat{\mathcal A}$ is scattered.
\end{theorem}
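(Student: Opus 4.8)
The plan is to obtain both directions from Proposition \ref{2.5}, isolating the single genuinely nontrivial implication. The equivalence (a)$\Leftrightarrow$(e) of Proposition \ref{2.5} already states that $\mathcal A$ is scattered if and only if $\mathcal A$ is of type I \emph{and} $\widehat{\mathcal A}$ is scattered. Thus when $\mathcal A$ is assumed to be of type I the theorem is immediate: the clause ``type I'' is free, so ``$\mathcal A$ scattered'' collapses to ``$\widehat{\mathcal A}$ scattered''. Likewise, in the separable case the forward implication costs nothing, since (a)$\Rightarrow$(e) shows that a scattered $\mathcal A$ has scattered spectrum automatically. Hence the entire content of the theorem reduces to a single statement: \emph{if $\mathcal A$ is separable and $\widehat{\mathcal A}$ is scattered, then $\mathcal A$ is of type I}. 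Once this is known, Proposition \ref{2.5}(e) upgrades it, together with scatteredness of $\widehat{\mathcal A}$, back to scatteredness of $\mathcal A$.

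To prove this reduced statement I would first record the elementary topological fact that a scattered space is necessarily $T_0$. Scatteredness is hereditary (every nonempty subspace of a scattered space is again scattered), so applying it to a two-point subspace $\{x,y\}$ with $x\neq y$ produces a relatively isolated point, that is, an open set of $\widehat{\mathcal A}$ containing exactly one of $x,y$; this is precisely the $T_0$ separation of the two points. Consequently the hypothesis that $\widehat{\mathcal A}$ is scattered forces the spectrum, in its Jacobson (hull-kernel) topology, to be $T_0$. At this point I would invoke Glimm's structure theorem for separable C*-algebras, which asserts that a separable C*-algebra is of type I if and only if its spectrum $\widehat{\mathcal A}$ is $T_0$ (equivalently, distinct irreducible representations have distinct kernels). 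The nontrivial direction yields that $\mathcal A$ is of type I, and combining this with $\widehat{\mathcal A}$ scattered gives scatteredness of $\mathcal A$ through Proposition \ref{2.5}(e), completing the separable case.

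The main obstacle is precisely this appeal to Glimm's theorem: the implication ``$\widehat{\mathcal A}$ is $T_0$ $\Rightarrow$ $\mathcal A$ is type I'' is the deep half of Glimm's dichotomy — its contrapositive asserts that a non-type-I separable C*-algebra has a non-$T_0$ (indeed non-smooth) spectrum — and it is exactly the input needed to remove the standalone hypothesis ``type I'' occurring in Proposition \ref{2.5}(e). One could attempt a more hands-on route avoiding Glimm by name: build a transfinite composition series of closed ideals $(I_\alpha)$ with $I_0=0$, $I_\lambda=\overline{\bigcup_{\alpha<\lambda}I_\alpha}$ at limit stages, and each successive quotient $I_{\alpha+1}/I_\alpha$ a dual C*-algebra (a $c_0$-sum of compact-operator algebras), thereby verifying Definition \ref{scatteredc} directly, with separability keeping the series of countable length. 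However, the crucial step of that construction — extracting a dual ideal from the relatively isolated points of the remaining spectrum — is itself of Glimm type, since it requires that an isolated point of the spectrum of a separable algebra be implemented by an ideal of compact operators. I therefore expect the honest core of any proof to be Glimm's theorem, with the remaining steps being the elementary topological lemma above and the bookkeeping furnished by Proposition \ref{2.5}.
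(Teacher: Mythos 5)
Your proposal is correct, but note first what you are comparing against: the paper does not prove Theorem \ref{2.6} at all --- it is stated as background with a citation to Jensen \cite{Jensen1978}, just as Proposition \ref{2.5} is stated without proof. So your argument is not a variant of an internal proof but a reconstruction, and it is essentially the classical one: Jensen's separable case does indeed rest on Glimm's dichotomy, exactly as you predicted in your final paragraph. Your reduction is the right one. The type I case and the forward separable implication are immediate from (a)$\Leftrightarrow$(e) of Proposition \ref{2.5}, and the only genuine content is: separable plus $\widehat{\mathcal A}$ scattered implies type I. Your route --- scattered $\Rightarrow$ $T_0$, then Glimm's theorem that a separable C*-algebra is type I if and only if $\widehat{\mathcal A}$ is $T_0$ (equivalently, inequivalent irreducible representations have distinct kernels; see \cite[Th.~9.1]{Dixmier}, since the paper's entry \cite{Glimm} is the 1960 UHF paper, not the 1961 type I paper) --- is sound, and there is no circularity within the paper's logical architecture: the equivalence (a)$\Leftrightarrow$(e) of Proposition \ref{2.5} comes from Jensen's type I results \cite{Jensen1977,Jensen1978} and does not presuppose the separable statement you are proving. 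One small point to patch: the paper's definition of scattered quantifies over \emph{closed} subsets, and your two-point set $\{x,y\}$ is generally not closed in $\widehat{\mathcal A}$, whose points need not even be closed. The fix is the standard equivalence of the two formulations: if every nonempty closed set has a relatively isolated point and $S\neq\emptyset$ is arbitrary, pick $x$ isolated in $\overline{S}$ via an open $U$ with $U\cap\overline{S}=\{x\}$; since $x\in\overline{S}$, the set $U\cap S$ is nonempty and contained in $\{x\}$, so $x\in S$ is isolated in $S$. With that sentence inserted, your $T_0$ step, and hence the whole proof, is complete.
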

The following theorem, which are due to S. Ghasemi and P. Koszmide \cite{GK}.
\begin{theorem}[\text{S. Ghasemi and P. Koszmide}]\label{2.7}
	Let $\mathcal A$ be a C*-algebra. Then
\begin{itemize}
	\item [(a)] 
$\mathcal I^{At}(\mathcal A)$ is an ideal of $\mathcal A$.
    \item [(b)]
$\mathcal I^{At}(\mathcal A)$ is isomorphic to *-subalgebra of the algebra $\mathcal K(H)$ of all compact operators on a Hilbert space $H$, i.e. there exists Hilbert space $\{H_i\}_{i\in I}$ such that $\mathcal I^{At}(\mathcal A)$ is isomorphic to the $c_0$-sum of $\{\mathcal K(H_i)\}_{i\in I}$.
    \item [(c)]
$\mathcal I^{At}(\mathcal A)$ contains all ideals of $\mathcal A$ which are isomorphic to a *-subalgebra of $\mathcal K(H)$ for some Hilbert space $H$.
    \item [(d)]
If $\mathcal J$ is an ideal of $\mathcal A$, then $\mathcal I^{At}(\mathcal J)=\mathcal I^{At}(\mathcal A)\cap\mathcal J$.
\end{itemize}
\end{theorem}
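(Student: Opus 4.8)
The plan is to reduce the whole theorem to a single structural fact about minimal projections. If $p\in At(\mathcal A)$ and $a\in\mathcal A$, then $pa^\ast ap\in p\mathcal Ap=\mathbb Cp$, so $pa^\ast ap=\lambda p$ for some $\lambda\ge 0$. If $\lambda=0$ then $ap=0$; otherwise $v:=\lambda^{-1/2}ap$ is a partial isometry with $v^\ast v=p$ and $q:=vv^\ast$. First I would record two elementary consequences, both flowing from the identities $v=vp=qv$: (i) $q$ is again minimal, since $v^\ast\mathcal Av=pv^\ast\mathcal Avp\subseteq p\mathcal Ap=\mathbb Cp$ forces $q\mathcal Aq=v(v^\ast\mathcal Av)v^\ast=\mathbb Cq$; and (ii) the off-diagonal corners are one-dimensional, $p\mathcal Aq=\mathbb Cv^\ast$ and $q\mathcal Ap=\mathbb Cv$ (given $pxq$, one has $v(pxq)=q(vx)q\in\mathbb Cq$, and applying $v^\ast$ recovers $pxq\in\mathbb Cv^\ast$). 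The crux of the entire theorem is then the claim that $v\in\mathcal I^{At}(\mathcal A)$.

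For this claim I would split into two cases. If $pq\neq0$, then $pq\in p\mathcal Aq=\mathbb Cv^\ast$ is a nonzero scalar multiple of $v^\ast$, so $v^\ast\in\mathbb C\,pq\subseteq\mathcal I^{At}(\mathcal A)$ because $pq$ is a product of two minimal projections; hence $v\in\mathcal I^{At}(\mathcal A)$. If $pq=0$, then $P:=p+q$ is a projection and, by (i)--(ii), $P\mathcal AP=\mathbb Cp+\mathbb Cq+\mathbb Cv+\mathbb Cv^\ast$ is a copy of $M_2(\mathbb C)$ with matrix units $\{p,q,v,v^\ast\}$. The key device is to manufacture $v$ out of genuine minimal projections: the rank-one projections $r=\frac{1}{2}(p+q+v+v^\ast)$ and $s=\frac{1}{2}\bigl(p+q+i(v-v^\ast)\bigr)$ of this $M_2(\mathbb C)$ satisfy $r\mathcal Ar=r(P\mathcal AP)r=\mathbb Cr$ (and similarly for $s$), so they are minimal in $\mathcal A$ and lie in $\mathcal I^{At}(\mathcal A)$; since $p,q\in\mathcal I^{At}(\mathcal A)$ this yields $v+v^\ast$ and $v-v^\ast$, hence $v\in\mathcal I^{At}(\mathcal A)$. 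I expect this orthogonal case---recognizing $P\mathcal AP\cong M_2(\mathbb C)$ and extracting the off-diagonal partial isometry from honest minimal projections---to be the main obstacle; everything afterward is bookkeeping.

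Granting the claim, part (a) is immediate: for minimal $p$ we get $ap=\lambda^{1/2}v\in\mathcal I^{At}(\mathcal A)$, and taking adjoints gives $pa\in\mathcal I^{At}(\mathcal A)$; since $\mathcal I^{At}(\mathcal A)$ is the closed span of products of minimal projections, linearity and continuity of multiplication promote this to $a\,\mathcal I^{At}(\mathcal A)\subseteq\mathcal I^{At}(\mathcal A)$ and $\mathcal I^{At}(\mathcal A)\,a\subseteq\mathcal I^{At}(\mathcal A)$, so it is a two-sided ideal. For part (b) I would organize the minimal projections by Murray--von Neumann equivalence, noting first that two minimal projections are either orthogonal or equivalent (if $pq\neq0$, normalizing $pq$ gives an implementing partial isometry). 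Writing $\{C_i\}_{i\in I}$ for the equivalence classes, within each $C_i$ I would choose by Zorn a maximal mutually orthogonal family indexed by $S_i$, fix a base point, and use the connecting partial isometries---all in $\mathcal I^{At}(\mathcal A)$ by the claim---to build a matrix-unit system $\{v^i_{st}\}_{s,t\in S_i}$ generating a copy $A_i\cong\mathcal K(H_i)$ with $H_i=\ell_2(S_i)$. Projections from distinct classes are orthogonal, so the $A_i$ are mutually orthogonal; maximality forces every minimal projection of class $C_i$ to lie in $A_i$ as a rank-one operator. Hence $\mathcal I^{At}(\mathcal A)$ is the $c_0$-sum of the $A_i$, which embeds as a $\ast$-subalgebra of $\mathcal K\bigl(\bigoplus_{i\in I}H_i\bigr)$.

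Finally, (c) and (d) rest on one observation: if $\mathcal J$ is an ideal of $\mathcal A$ and $p$ is a minimal projection of $\mathcal J$, then $p\mathcal Ap=p\mathcal Jp=\mathbb Cp$ (because $p\in\mathcal J$ and $\mathcal J$ is an ideal force $pap\in p\mathcal Jp$), so $p\in At(\mathcal A)$. For (c), an ideal $\mathcal J$ isomorphic to a $\ast$-subalgebra of $\mathcal K(H)$ is, by the classical structure theorem, a $c_0$-sum of algebras $\mathcal K(H_k)$ and hence generated by its minimal projections; these are minimal in $\mathcal A$, so $\mathcal J=C^\ast(At(\mathcal J))\subseteq C^\ast(At(\mathcal A))=\mathcal I^{At}(\mathcal A)$. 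For (d), the inclusion $\mathcal I^{At}(\mathcal J)\subseteq\mathcal I^{At}(\mathcal A)\cap\mathcal J$ holds because minimal projections of $\mathcal J$ are simultaneously minimal in $\mathcal A$ and lie in $\mathcal J$; conversely $\mathcal I^{At}(\mathcal A)\cap\mathcal J$ is an intersection of ideals of $\mathcal A$, hence an ideal of $\mathcal J$, and is isomorphic to a $\ast$-subalgebra of $\mathcal K(H)$ by (b), so applying (c) inside $\mathcal J$ gives $\mathcal I^{At}(\mathcal A)\cap\mathcal J\subseteq\mathcal I^{At}(\mathcal J)$.
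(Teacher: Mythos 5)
The paper offers no proof of Theorem \ref{2.7} at all: it is imported from Ghasemi--Koszmider \cite{GK} as a quoted result, so your argument can only be judged on its own merits, and on those merits it is essentially correct and gives a self-contained proof that the paper lacks. I verified the crux claim carefully: with $v=vp=qv$ the corner computations $q\mathcal Aq=\mathbb Cq$, $p\mathcal Aq=\mathbb Cv^\ast$, $q\mathcal Ap=\mathbb Cv$ are right; in the non-orthogonal case $pq$ is indeed a nonzero multiple of $v^\ast$ and lies in $\mathcal I^{At}(\mathcal A)$; and in the orthogonal case one checks $v^2=(vp)(qv)=0$, so $\{p,q,v,v^\ast\}$ really are matrix units, the elements $r=\frac12(p+q+v+v^\ast)$ and $s=\frac12(p+q+i(v-v^\ast))$ are rank-one projections of $P\mathcal AP\cong M_2(\mathbb C)$, and since $r=PrP$ one gets $r\mathcal Ar=r(P\mathcal AP)r=\mathbb Cr$, so $r,s$ are honestly minimal in $\mathcal A$ and $v=\frac12[(2r-p-q)-i(2s-p-q)]\in\mathcal I^{At}(\mathcal A)$. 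Part (a) then follows exactly as you say, and your (c) and (d) are sound: minimality passes from an ideal to the ambient algebra because $p\mathcal Ap\subseteq p\mathcal Jp$, the classical structure theorem for C*-subalgebras of $\mathcal K(H)$ is legitimately invoked, and re-applying (c) with $\mathcal J$ as the ambient algebra is valid since (c) is proved for arbitrary C*-algebras.

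The one place where you assert rather than prove is in (b): \emph{``maximality forces every minimal projection of class $C_i$ to lie in $A_i$.''} This is true but not automatic, and it is the only real debt in the writeup. To close it with your own tools: let $e\in C_i$ and pick $u$ with $u^\ast u=p_{t_0}$, $uu^\ast=e$. Then $p_su\in p_s\mathcal Ap_{t_0}=\mathbb Cv_s$, say $p_su=c_sv_s$, and $u^\ast\bigl(\sum_{s\in F}p_s\bigr)u=\bigl(\sum_{s\in F}\vert c_s\vert^2\bigr)p_{t_0}\leq u^\ast u$ gives the Bessel bound $\sigma:=\sum_s\vert c_s\vert^2\leq1$, with countable support, so $g=\sum_sc_sv_s$ converges in norm to an element of $A_i$. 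If $\sigma=1$, then $\Vert u-g\Vert^2=\Vert(u-g)^\ast(u-g)\Vert=1-\sigma=0$, whence $u\in A_i$ and $e=uu^\ast\in A_i$. If $\sigma<1$, the defect $w=u-g$ satisfies $w^\ast w=(1-\sigma)p_{t_0}$ and $p_sw=c_sv_s-c_sv_s=0$ for every $s$, so by your transfer-of-minimality lemma (i) the projection $(1-\sigma)^{-1}ww^\ast$ is a minimal projection of class $C_i$ orthogonal to the whole family $\{p_s\}_{s\in S_i}$, contradicting maximality. Since this completion uses only the corner lemma and lemma (i) that you already established, the gap is routine; with it filled, the proof of all four parts stands.
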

\begin{definition}
	If $S$ is a subset of a C*-algebra $\mathcal A$, we let
	\[
	{\rm hull}'(S)=\big\{[H,\pi]\in\widehat{\mathcal A}:\ker[H,\pi]\supset S\big\}.
	\]
	If $R$ is a non-empty set of $\widehat{\mathcal A}$, we denote
	\[
	\ker'(R)=\bigcap_{[H,\pi]\in R}\ker\pi.
	\]
We set $\ker'(\emptyset)=\mathcal A$.
\end{definition}
We will use some classical theory of the spectrum of C*-algebra in the following text.  A good comprehensive reference is the book of J. Dixmier \cite{Dixmier}. The following theorem, it is see to obtain by basic conclusions which the theory of the spectrum of C*-algebra and the conclusion of J. Dixmier \cite{Dixmier1964}.

\begin{theorem}\label{2.9}
	Let $\mathcal A$ be of separable or type I C*-algebra. Then $\mathcal A$ is isomorphic to the $c_0$-sum of $\{\mathcal K(H_i)\}_{i\in I}$, where $\{H_i\}_{i\in I}$ is a family of Hilbert spaces,  if and only if $\widehat{\mathcal A}$ is discrete.
\end{theorem}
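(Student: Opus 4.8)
The plan is to prove the two implications separately, and to arrange matters so that the separable hypothesis is reduced to the type I hypothesis at the very start; a single structural argument then settles the substantive direction in both cases. Throughout I would lean on the standard dictionary from the theory of the spectrum (Dixmier): the identification of $\widehat{\mathcal J}$ with an open subset of $\widehat{\mathcal A}$ for a closed two-sided ideal $\mathcal J$, and more generally the bijection between open subsets of $\widehat{\mathcal A}$ and closed two-sided ideals of $\mathcal A$.

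For the easy direction, suppose $\mathcal A$ is isometrically $*$-isomorphic to the $c_0$-sum of $\{\mathcal K(H_i)\}_{i\in I}$. Each summand embeds as a closed two-sided ideal, and these ideals are mutually orthogonal. First I would record that $\mathcal K(H)$ admits, up to unitary equivalence, a unique non-zero irreducible representation, so $\widehat{\mathcal K(H_i)}$ is a single point $t_i$. Next I would argue that every non-zero irreducible representation $\pi$ of the $c_0$-sum is non-zero on exactly one summand: if $\pi$ were non-zero on two orthogonal ideals $\mathcal K(H_i)$ and $\mathcal K(H_j)$, then $\pi(\mathcal K(H_i))\pi(\mathcal K(H_j))=0$, while each range, being a non-zero $\pi(\mathcal A)$-invariant subspace, is the whole representation space by irreducibility, a contradiction. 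Hence $\widehat{\mathcal A}=\{t_i:i\in I\}$, and since each $\{t_i\}=\widehat{\mathcal K(H_i)}$ is open, $\widehat{\mathcal A}$ is discrete.

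For the converse, assume $\widehat{\mathcal A}$ is discrete. The first step is a reduction to the type I case. Discreteness makes $\widehat{\mathcal A}$ a $T_1$ space; since the closure of $\{[H,\pi]\}$ in $\widehat{\mathcal A}$ equals $\{[H',\sigma]:\ker\sigma\supseteq\ker\pi\}$, the $T_1$ property forces the canonical map $\widehat{\mathcal A}\to\mathrm{Prim}(\mathcal A)$, $[\pi]\mapsto\ker\pi$, to be injective. If $\mathcal A$ is type I this map is already a bijection; if $\mathcal A$ is separable, injectivity of this map is, by Glimm's theorem, equivalent to $\mathcal A$ being type I. Thus in either case we may assume $\mathcal A$ is type I. Now, since each singleton $\{t_i\}$ is open, it corresponds to a closed ideal $\mathcal J_i$ with $\widehat{\mathcal J_i}=\{t_i\}$. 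A one-point space has only the trivial open subsets, so $\mathcal J_i$ has only the trivial closed ideals, hence is simple; a simple type I C*-algebra is elementary, and comparing $\pi_i(\mathcal J_i)$ with the compact operators it must contain yields $\mathcal J_i\cong\mathcal K(H_i)$.

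It then remains to assemble the pieces. Distinct points give $\mathcal J_i\cap\mathcal J_j=\{0\}$ for $i\neq j$, so the ideals $\{\mathcal J_i\}$ are mutually orthogonal; the closed linear span $\overline{\sum_i\mathcal J_i}$ is a closed ideal whose spectrum is $\bigcup_i\{t_i\}=\widehat{\mathcal A}$, and an ideal with full spectrum must be all of $\mathcal A$. By orthogonality the natural map from the $c_0$-sum of $\{\mathcal J_i\}$ into $\mathcal A$ is an injective $*$-homomorphism, hence isometric, with dense range, and therefore an isometric $*$-isomorphism onto $\mathcal A$; combined with $\mathcal J_i\cong\mathcal K(H_i)$ this gives the result. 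I expect the main obstacle to be precisely the reduction step in the separable case: passing from a discrete spectrum to the type I property is not purely topological and genuinely uses Glimm's characterization of type I separable C*-algebras together with Dixmier's correspondence between ideals and open sets. Once type I is secured, the remainder is the routine decomposition of a C*-algebra with discrete spectrum into its minimal ideals, where the only care needed is the clean identification of each one-point-spectrum ideal as an algebra of compact operators.
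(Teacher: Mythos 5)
Your proposal is correct, and it fills in precisely the argument the paper intends: the paper states Theorem \ref{2.9} without proof, remarking only that it follows from basic spectral theory of C*-algebras and Dixmier's 1964 result, and your route — Glimm's theorem to reduce the separable case to type I via injectivity of $\widehat{\mathcal A}\to\mathrm{Prim}(\mathcal A)$, the bijection between open subsets of $\widehat{\mathcal A}$ and closed ideals to identify each open singleton with a simple ideal, the fact that a simple type I C*-algebra is elementary, and the orthogonal-ideal $c_0$-sum assembly — is exactly that intended argument made explicit. The only steps you use without comment (that ideals of type I algebras are type I, and that an injective $*$-homomorphism is isometric) are standard facts, so there is no gap.
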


\begin{definition}
	Let $\mathcal A$ be a C*-algebra. A von Neumann algebra $\mathcal M$ is called universal enveloping von Neumann algebra of $\mathcal A$, if the following conditions hold
	\begin{itemize}
		\item [(1)] $\mathcal A\subset\mathcal M$,
		\item [(2)] $\mathcal A$ is $\sigma$-weakly dense in $\mathcal M$,
		\item [(3)] $\{\varphi|_\mathcal A:\varphi\in\mathcal M_\ast\}=\mathcal A^\ast$.
	\end{itemize}
\end{definition}
Such the von Neumann algebra is definitely exist (and uniquely determined up to isomorphism). A well-know is that 
\[
\mathcal M=\left({\bigoplus}_{\varphi\in\mathcal S(\mathcal A)}\left(H_\varphi,\pi_\varphi\right)\right)\Big(\mathcal A\Big)'',
\]
(e.g. see \cite{Takesaki}). In particular, by Kaplansky density theorem, $T:\varphi\in\mathcal M_\ast\mapsto\varphi|_\mathcal A\in\mathcal A^\ast$ is linearly isometric surjective map from $\mathcal M_\ast$ to $\mathcal A^\ast$, then $T^\ast$ is $w^\ast$-$w^\ast$ linearly isometric surjective map from $\mathcal A^{\ast\ast}$ to $\mathcal M$. Hence, without loss of generality, we can see that $\mathcal A^{\ast\ast}$ is a von Neumann algebra.

\section{\large Szlenk and $w^\ast$-dentability indices of Banach spaces}
Next, we list some conclusions concerning the Szlenk index.
\begin{prop}\label{3.1}
Let $X$ be a Banach space.
	\begin{itemize}
	    \item [(a)] $Sz(X)=1$ if and only if $\dim X<\infty$.
		\item [(b)]
If $Y$ is a closed subspace of $X$, then
\[
	Sz(Y), Sz(X/Y)\leq Sz(X).
\]
        \item [(c)]
        If $K\subset X^\ast$ is a non-empty $w^\ast$-compact convex set and $Sz(K)<\infty$, then there exists an ordinal $\xi$ such that 
\[
Sz(K)=\omega^\xi.
\]
In particular, if $Sz(X)<\infty$, then $Sz(X)=\omega^{\zeta}$ for some ordinal $\zeta$. The same is true for the $w^\ast$-dentability index.

        \item [(d)]
If $K,L\subset X^\ast$ are w*-compact and $K\subset L$, then
\[
Sz(K)\leq Sz(L).
\]
        \item [(e)]
For any $A,K,L\subset X^\ast$ nonempty w*-compact set, ordinal $\xi$ and $m\in\mathbb N$, if $A\subset K+L$, then
\[
s_{4\varepsilon}^{\omega^\xi m}(A)\subset\bigcup_{i+j=m}\big[s_\varepsilon^{\omega^\xi i}(K)+s_\varepsilon^{\omega^\xi j}(L)\big]
\]
        \item [(f)]
Let $\alpha$ be an ordinal. Assume that
\[
\forall\varepsilon>0~\exists\delta(\varepsilon)>0~s_\varepsilon^\alpha\big(B_{X^\ast}\big)\subset(1-\delta(\varepsilon))B_{X^\ast}.
\]
Then
\[
Sz(X)\leq\alpha\cdot\omega.
\]
      \end{itemize}
\end{prop}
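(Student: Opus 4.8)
My plan is to treat the six assertions in order of increasing difficulty, isolating one summation estimate that drives both (c) and (e). For (a), note $Sz(X)\geq 1$ always since $s_\varepsilon^0(B_{X^\ast})=B_{X^\ast}\neq\emptyset$, so $Sz(X)=1$ means $s_\varepsilon^1(B_{X^\ast})=\emptyset$ for every $\varepsilon>0$, i.e. every $x^\ast\in B_{X^\ast}$ has a $w^\ast$-neighborhood of diameter $\leq\varepsilon$; by $w^\ast$-compactness $B_{X^\ast}$ is then covered by finitely many sets of diameter $\leq\varepsilon$, hence totally bounded, hence norm-compact, forcing $\dim X<\infty$, and the converse is immediate since for $\dim X<\infty$ the $w^\ast$- and norm topologies on $B_{X^\ast}$ agree. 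For (d), if $K\subseteq L$ then $\mathrm{diam}(V\cap K)\leq\mathrm{diam}(V\cap L)$ for every $w^\ast$-open $V$, so $s_\varepsilon(K)\subseteq s_\varepsilon(L)$ and a transfinite induction gives $s_\varepsilon^\alpha(K)\subseteq s_\varepsilon^\alpha(L)$, whence $Sz(K)\leq Sz(L)$. For (b) I pass to duals: $(X/Y)^\ast$ is $w^\ast$-homeomorphically isometric to $Y^\perp$, so $B_{(X/Y)^\ast}=Y^\perp\cap B_{X^\ast}\subseteq B_{X^\ast}$ and (d) yields $Sz(X/Y)\leq Sz(X)$; for the subspace, the restriction $R\colon X^\ast\to Y^\ast$ is a $w^\ast$-$w^\ast$ continuous metric surjection with $R(B_{X^\ast})=B_{Y^\ast}$, and since $\|x_1^\ast-x_2^\ast\|\geq\|Rx_1^\ast-Rx_2^\ast\|$ any $\varepsilon$-separated pair in $B_{Y^\ast}$ lifts to one in $B_{X^\ast}$, so a neighborhood/tube argument gives $s_\varepsilon^\alpha(B_{Y^\ast})\subseteq R\big(s_\varepsilon^\alpha(B_{X^\ast})\big)$ (with at worst an arbitrarily small loss in $\varepsilon$ that disappears in the supremum), hence $Sz(Y)\leq Sz(X)$.

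For (c) the strategy is to show $Sz(K)$ is additively indecomposable, since a positive ordinal equals $\omega^\xi$ for some $\xi$ exactly when it is closed under ordinal addition. The engine is a summation lemma valid for arbitrary $w^\ast$-compact sets, proved by transfinite induction: $s_\varepsilon^\gamma(K)+C\subseteq s_\varepsilon^\gamma(K+C)$ for every set $C$ and ordinal $\gamma$, the base case recording only that translating an $\varepsilon$-separated pair of $K$ near $x^\ast\in s_\varepsilon(K)$ by a fixed $c^\ast\in C$ keeps it $\varepsilon$-separated inside $K+C$. Applying it twice gives $s_\varepsilon^\alpha(K)+s_\varepsilon^\beta(K)\subseteq s_\varepsilon^{\alpha+\beta}(K+K)$, and here convexity enters decisively: $K$ convex forces $K+K=2K$, so together with the scaling identity $s_\eta^\gamma(2K)=2\,s_{\eta/2}^\gamma(K)$ one gets $s_\varepsilon^\alpha(K)+s_\varepsilon^\beta(K)\subseteq 2\,s_{\varepsilon/2}^{\alpha+\beta}(K)$. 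Consequently, if $\alpha,\beta<Sz(K)$ then, choosing a common scale $\varepsilon$ via monotonicity of $Sz_\varepsilon$ in $\varepsilon$, both $s_\varepsilon^\alpha(K)$ and $s_\varepsilon^\beta(K)$ are nonempty, so $s_{\varepsilon/2}^{\alpha+\beta}(K)\neq\emptyset$ and $\alpha+\beta<Sz(K)$. Thus $Sz(K)=\omega^\xi$; taking $K=B_{X^\ast}$ gives $Sz(X)=\omega^\zeta$, and the identical argument with slices replacing $w^\ast$-neighborhoods handles $Dz$.

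Assertion (e) is where I expect the real work. Using (d) I first reduce to $A=K+L$ (legitimate since $K+L$ is the $w^\ast$-continuous image of the compact $K\times L$, hence $w^\ast$-compact). The heart is a single-block splitting estimate $s_{4\varepsilon}^{\omega^\xi}(K+L)\subseteq[s_\varepsilon^{\omega^\xi}(K)+L]\cup[K+s_\varepsilon^{\omega^\xi}(L)]$, which I would prove by transfinite induction on $\xi$. The case $\xi=0$ is the geometric core: given $a^\ast=k^\ast+l^\ast\in s_{4\varepsilon}(K+L)$ with $k^\ast\notin s_\varepsilon(K)$ and $l^\ast\notin s_\varepsilon(L)$, there are $w^\ast$-neighborhoods of $k^\ast,l^\ast$ meeting $K,L$ in sets of diameter $\leq\varepsilon$; the delicate point, handled by compactness of the fibers of the addition map together with the slack in the factor $4$, is that every decomposition of points of $K+L$ near $a^\ast$ can be forced near $(k^\ast,l^\ast)$, so a neighborhood of $a^\ast$ meets $K+L$ in diameter $\leq 2\varepsilon$, a contradiction. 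The successor and limit steps in $\xi$ use $\omega^{\xi+1}=\sup_n\omega^\xi n$ and intersections respectively. Finally I would induct on $m$ via $\omega^\xi(m+1)=\omega^\xi m+\omega^\xi$ and the additivity $s^{\gamma+\delta}=s^\gamma\circ s^\delta$, absorbing the scale losses from distributing a derivation over the union and the sum into the fixed factor $4$; keeping that factor from growing with $m$ is the main obstacle, and is precisely what the convex/additive bookkeeping must secure.

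For (f), fix $\varepsilon>0$ and iterate the hypothesis at this single scale. Because $s_s^\alpha(B_{X^\ast})\subseteq s_\varepsilon^\alpha(B_{X^\ast})$ whenever $s\geq\varepsilon$, the inclusion $s_\varepsilon^\alpha(B_{X^\ast})\subseteq(1-\delta)B_{X^\ast}$ with $\delta=\delta(\varepsilon)>0$ upgrades, through the scaling identity $s_\varepsilon^\alpha(rB_{X^\ast})=r\,s_{\varepsilon/r}^\alpha(B_{X^\ast})$, to $s_\varepsilon^{\alpha(n+1)}(B_{X^\ast})\subseteq r_n(1-\delta)B_{X^\ast}$ whenever $s_\varepsilon^{\alpha n}(B_{X^\ast})\subseteq r_nB_{X^\ast}$ with $r_n\leq 1$. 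Hence $r_n\leq(1-\delta)^n\to 0$, and once $r_n<\varepsilon/2$ the scale $\varepsilon/r_n$ exceeds $\mathrm{diam}(B_{X^\ast})\leq 2$, so $s_{\varepsilon/r_n}^1(B_{X^\ast})=\emptyset$ and therefore $s_\varepsilon^{\alpha(n+1)}(B_{X^\ast})=\emptyset$. This yields $Sz_\varepsilon(B_{X^\ast})\leq\alpha(n+1)<\alpha\omega$ for every $\varepsilon>0$, whence $Sz(X)=\sup_{\varepsilon>0}Sz_\varepsilon(B_{X^\ast})\leq\alpha\omega$.
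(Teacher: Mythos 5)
Your items (a), (d) and (f) are correct, and your (c) is a complete proof: the translation lemma $s_\varepsilon^\gamma(K)+C\subseteq s_\varepsilon^\gamma(K+C)$ (immediate at the first step, since translating an $\varepsilon$-separated pair of $K$ by a fixed $c^\ast\in C$ preserves separation, and stable under the successor and limit steps), applied twice and combined with $K+K=2K$ and the scaling identity, shows $Sz(K)$ is closed under ordinal addition, hence equal to some $\omega^\xi$; the only nit is that composing derivations gives $s_\varepsilon^{\beta+\alpha}$ rather than $s_\varepsilon^{\alpha+\beta}$, which the symmetry of your hypothesis absorbs. For the record, the paper proves none of the six items: it cites \cite{B2012} for (a), \cite{HLM} for (b), \cite{Lancien1996} and \cite{Causey2017} for (c), \cite[Lem.~2.4]{Causey2017b} for (e), and \cite{HL2007} for (f). Where you are correct, your route matches the cited sources: (c) is essentially Causey's convexity argument and (f) is the standard iteration of \cite{HL2007}.

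There are two genuine gaps. In (b), the quotient half is fine, but the subspace half rests on the unproved inclusion $s_\varepsilon^\alpha(B_{Y^\ast})\subseteq R\big(s_{\varepsilon'}^\alpha(B_{X^\ast})\big)$, and your ``neighborhood/tube argument'' does not deliver even the case $\alpha=1$: a $w^\ast$-neighborhood of a candidate preimage $x^\ast\in R^{-1}(y^\ast)\cap B_{X^\ast}$ constrains values on vectors of $X$ outside $Y$, and the $\varepsilon$-separated pairs witnessing $y^\ast\in s_\varepsilon(B_{Y^\ast})$ lift to pairs whose $w^\ast$-cluster points may be two \emph{distinct} points of the fiber, far apart in norm, with neither one accumulating any separation in its own neighborhoods. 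Your argument would be valid if $R\colon B_{X^\ast}\to B_{Y^\ast}$ were $w^\ast$-open (then every fiber point would lie in $s_\varepsilon(B_{X^\ast})$), but openness fails in general — which is exactly why $Sz(Y)\leq Sz(X)$ is a theorem of \cite{HLM} rather than a remark.

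In (e), your base case is sound: by compactness of the fiber $\{(k,l)\in K\times L:k+l=a^\ast\}$, points of $K+L$ sufficiently $w^\ast$-near $a^\ast$ decompose inside finitely many product patches centered on the fiber, and each patch pins such points within $4\varepsilon$ of $a^\ast$, so $s_{4\varepsilon}(K+L)\subseteq[s_\varepsilon(K)+L]\cup[K+s_\varepsilon(L)]$. But the inductive architecture fails as proposed. First, proving your single-block estimate at level $\omega^{\xi+1}=\sup_n\omega^\xi n$ already requires the full $m$-block statement at level $\omega^\xi$ with the constant $4$ intact, so the inductions on $\xi$ and $m$ cannot be run in sequence; they must be intertwined. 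Second, your $m$-step requires distributing $s_{4\varepsilon}^{\omega^\xi}$ over the $(m+1)$-fold union coming from the inductive hypothesis, and distribution over unions at a \emph{fixed} scale and order genuinely fails: already $s_\varepsilon(A\cup B)\subseteq s_\varepsilon(A)\cup s_\varepsilon(B)$ is false (take $A=\{0\}\cup\{0.51\varepsilon\,e_n\}$ and $B=\{0\}\cup\{-0.51\varepsilon\,e_n\}$ in $\ell_2$: then $0\in s_\varepsilon(A\cup B)$ but $0\notin s_\varepsilon(A)\cup s_\varepsilon(B)$), so ``absorbing the scale losses into the fixed factor $4$'' cannot work as described. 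The missing idea is to pay in ordinal count rather than in $\varepsilon$: a same-$\varepsilon$ union estimate with bookkeeping by natural (Hessenberg) sums, of the shape $s_\varepsilon^{\omega^\xi(i+j)}(A\cup B)\subseteq s_\varepsilon^{\omega^\xi i}(A)\cup s_\varepsilon^{\omega^\xi j}(B)$ for $w^\ast$-compact sets (compare \cite{B2011}), which, woven into the simultaneous induction, is precisely the content of \cite[Lem.~2.4]{Causey2017b}. You name this obstacle but do not resolve it, so (e) — the item the paper's Theorem~\ref{3.9} actually leans on — remains unproved in your proposal.
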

Item (a) can be found in \cite{B2012}. Item (b): It is easy to see that $Sz(X/Y)=Sz(B_{Y^\perp})\leq Sz(X)$, and $Sz(Y)\leq Sz(X)$ is given in \cite{HLM}. Item (c) is given in \cite{Lancien1996} when $K=B_{X^\ast}$, but the same proof works for convex sets. Another proof can be found in \cite{Causey2017}. Item (d) follow easily from the definition. Item (e) can see \cite[Lem. 2.4]{Causey2017b}. Item (f) can be found in \cite[Prop. 2.2]{HL2007} or \cite[Prop. 3.4]{Lancien2006}.

Let $X$ be a Banach space, and given a $p\in(1,\infty)$, $L_p(X)$ denotes the space of (equivalence classes of) Bochner-integrable, $X$-valued functions defined $[0,1]$ with Lebesgue measure $f$ such that $\int\Vert f\Vert^p<\infty$. If $Y$ is a closed subspace of $X$, a natural inclusion
by given
\[
L_p(Y)\equiv\{f\in L_p(X):f(t)\in Y~{\rm a.e.}~t\in[0,1]\}\subset L_p(X).
\]
We denote $Q_Y:L_p(X)\to L_p(X/Y)$ given by $(Q_Yf)(t)=f(t)+Y$. Obviously $\ker(Q_Y)=L_p(Y)$. By the density of the simple functions in $L_p(X/Y)$ and $\Vert Q_Y\Vert\leq1$, it is easy to see that $\overline{Q_Y(B_{L_p(X)})}=B_{L_p(X/Y)}$. This implies that $Q_Y$ is a quotient map and $L_p(X)/L_p(Y)\cong L_p(X/Y)$.

When $X$ is Asplund, i.e. $X^\ast$ has the Radon-Nikodym property see Theorem \ref{2.1}. It follows from \cite[Page 98]{DU}, for any $1<p<\infty$, $L_p(X)^\ast\cong L_{p'}(X^\ast)$ via the canonical embedding of $L_{p'}(X^\ast)$ into $L_p(X)^\ast$, where $1/p+1/p'=1$. If $Y$ is a closed space of $X$, it is easy to see that $L_p(Y)^\perp\cong L_{p'}(Y^\perp)$ via the canonical embedding. Indeed, one way is that $L_p(Y)^\perp\cong\big(L_p(X)/L_p(Y)\big)^\ast\cong \big(L_p(X/Y)\big)^\ast\cong L_{p'}\big((X/Y)^\ast\big)\cong L_{p'}(Y^\perp)$.

The following theorem was given in \cite{Lancien2006} when $p=2$, but it can be easily adjusted to any $1<p<\infty$, or can see \cite[Thm 5.2]{Causey2017a}:
\begin{theorem}\label{3.2}
Let $X$ be a Banach space and $1<p<\infty$. Then 
\[
Dz(X)\leq Sz(L_p(X)).
\]
\end{theorem}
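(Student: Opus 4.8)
The plan is to reduce to the case $Sz(L_p(X))<\infty$ — otherwise the inequality is vacuous — and to work in the dual identification $L_p(X)^\ast\cong L_{p'}(X^\ast)$ recorded above, which is available because $Sz(L_p(X))<\infty$ forces $L_p(X)$, hence its subspace of constant functions (an isometric copy of $X$), to be Asplund (Theorem \ref{2.1}). Thus $Sz(L_p(X))=Sz\big(B_{L_{p'}(X^\ast)}\big)$, and it suffices to compare the $w^\ast$-dentability derivation of $B_{X^\ast}$ with the Szlenk derivation of $B_{L_{p'}(X^\ast)}$, showing $Dz_\varepsilon(B_{X^\ast})\le Sz_{c_p\varepsilon}\big(B_{L_{p'}(X^\ast)}\big)$ for a constant $c_p\in(0,1)$ depending only on $p$. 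Taking the supremum over $\varepsilon$ then yields the theorem, since $c_p\varepsilon$ ranges over all of $(0,\infty)$.

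First I would write $K_\alpha:=d_\varepsilon^\alpha(B_{X^\ast})$, a decreasing transfinite family of $w^\ast$-compact sets, and introduce the associated families of vector-valued functions
\[
\mathcal F_\alpha := \big\{\, g\in B_{L_{p'}(X^\ast)} : g(t)\in K_\alpha \text{ for a.e. } t\in[0,1]\,\big\}.
\]
The core of the argument is the transfinite induction
\[
\mathcal F_\alpha \subseteq s_{c_p\varepsilon}^{\alpha}\big(B_{L_{p'}(X^\ast)}\big)\qquad\text{for every ordinal }\alpha.
\]
Granting this, I specialize to constant functions: if $x^\ast\in K_\alpha=d_\varepsilon^\alpha(B_{X^\ast})$ then $\mathbf 1\otimes x^\ast\in\mathcal F_\alpha\subseteq s_{c_p\varepsilon}^\alpha\big(B_{L_{p'}(X^\ast)}\big)$. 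Consequently $s_{c_p\varepsilon}^\alpha\big(B_{L_{p'}(X^\ast)}\big)=\emptyset$ forces $d_\varepsilon^\alpha(B_{X^\ast})=\emptyset$, which is exactly the estimate $Dz_\varepsilon(B_{X^\ast})\le Sz_{c_p\varepsilon}\big(B_{L_{p'}(X^\ast)}\big)$.

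In the induction the base case is immediate, since $\mathcal F_0\subseteq B_{L_{p'}(X^\ast)}=s^0$, and the limit case follows because $K_\alpha=\bigcap_{\beta<\alpha}K_\beta$ gives $\mathcal F_\alpha=\bigcap_{\beta<\alpha}\mathcal F_\beta\subseteq\bigcap_{\beta<\alpha}s_{c_p\varepsilon}^\beta=s_{c_p\varepsilon}^\alpha$. For the successor step I take $g\in\mathcal F_{\beta+1}$ and a basic $w^\ast$-neighbourhood $V$ of $g$ determined by $f_1,\dots,f_n\in L_p(X)$ and a tolerance $\eta>0$. Since $K_{\beta+1}\subseteq K_\beta$, the inductive hypothesis already places $g$ itself in $V\cap s_{c_p\varepsilon}^\beta$, so it is enough to produce a single further point $h\in V\cap\mathcal F_\beta$ with $\|h-g\|_{p'}>c_p\varepsilon$; then $\operatorname{diam}\big(V\cap s_{c_p\varepsilon}^\beta\big)>c_p\varepsilon$ and hence $g\in s_{c_p\varepsilon}^{\beta+1}$. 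To build $h$ I first approximate $f_1,\dots,f_n$ and $g$ in norm by simple functions subordinate to a common fine partition of $[0,1]$ into intervals $A_k$, on which $g$ takes values $y_k^\ast\in K_{\beta+1}=d_\varepsilon(K_\beta)$. Using that each $y_k^\ast$ survives one more step of the dentability derivation, I select from $K_\beta$ a perturbation of $y_k^\ast$ by at least $\varepsilon/2$ in norm and place the perturbed values — splitting each $A_k$ and arranging them symmetrically — so that the pointwise changes nearly cancel when integrated against the simple approximants of the $f_i$, while accumulating an $L_{p'}$-distance to $g$ bounded below by a fixed multiple of $\varepsilon$.

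The hard part will be precisely this last construction. The neighbourhood $V$ imposes the $n$ simultaneous integral constraints $\big|\int\langle h-g,f_i\rangle\big|<\eta$, whereas the dentability of a single point $y_k^\ast$ only furnishes far-apart points inside one slice, i.e. controls a single linear direction at a time. Reconciling the two requires a balancing argument spread across the many intervals $A_k$: refining the partition and choosing the slices thin, so that the averaged perturbation $\tfrac12(z_k^++z_k^-)-y_k^\ast$ pairs negligibly with each $x_{i,k}$, is what lets all $n$ constraints be driven below $\eta$ simultaneously, and it is here that the geometry of $L_{p'}$ and the precise value of $c_p$ enter. Keeping the norm gain uniformly bounded below while forcing every constraint small is the delicate estimate; once it is in place, the induction and the reduction above close the proof.
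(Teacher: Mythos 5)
Your overall skeleton --- reducing to the case $Sz(L_p(X))<\infty$ so that $X$ is Asplund and $L_p(X)^\ast\cong L_{p'}(X^\ast)$, introducing $\mathcal F_\alpha=\{g\in B_{L_{p'}(X^\ast)}:g(t)\in d_\varepsilon^\alpha(B_{X^\ast})\ {\rm a.e.}\}$, proving $\mathcal F_\alpha\subseteq s_{c\varepsilon}^\alpha\big(B_{L_{p'}(X^\ast)}\big)$ by transfinite induction, and specializing to constant functions --- is exactly the route of the argument the paper points to: note the paper gives no proof of Theorem \ref{3.2} at all, citing \cite{Lancien2006} for $p=2$ and \cite[Thm 5.2]{Causey2017a} for general $p$. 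But you have left unproved precisely the step that carries all the content, and the mechanism you sketch for it would fail. A thin slice $\{z^\ast\in K_\beta:{\rm Re}\,\langle z^\ast,x\rangle>{\rm Re}\,\langle y_k^\ast,x\rangle-\delta\}$ containing $y_k^\ast$ is only guaranteed to contain two points of $K_\beta$ at mutual distance $>\varepsilon$; the slice condition is one-sided even in the direction $x$, and it constrains those points in no way against the remaining functionals $x_{i,k}$. Hence the averaged pair $\tfrac12(z_k^++z_k^-)$ has no reason to pair negligibly with any $x_{i,k}$, however thin the slice and however fine the partition, and the uncontrolled errors on distinct intervals $A_k$ are arbitrary and need not cancel; no balancing argument of the kind you describe can close this.

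The missing idea is a Hahn--Banach separation fact that controls all $n$ directions simultaneously: for any $w^\ast$-compact $K\subset X^\ast$ and any $x^\ast\in d_\varepsilon(K)$,
\[
x^\ast\in\overline{\rm co}^{w^\ast}\big(\{y^\ast\in K:\Vert y^\ast-x^\ast\Vert\geq\varepsilon/2\}\big),
\]
since otherwise separating $x^\ast$ from this hull produces a $w^\ast$-open slice of $K$ containing $x^\ast$ of diameter at most $\varepsilon$. Applied with $K=K_\beta$ and $x^\ast=y_k^\ast\in K_{\beta+1}=d_\varepsilon(K_\beta)$, it furnishes a convex combination $\sum_j\lambda_{k,j}z_{k,j}^\ast$ with $z_{k,j}^\ast\in K_\beta$, $\Vert z_{k,j}^\ast-y_k^\ast\Vert\geq\varepsilon/2$, approximating $y_k^\ast$ to any tolerance against all of $x_{1,k},\dots,x_{n,k}$ at once; splitting $A_k$ into subintervals of lengths $\lambda_{k,j}\vert A_k\vert$ and setting $h=z_{k,j}^\ast$ there gives $h\in\mathcal F_\beta\cap V$ with $\Vert h(t)-g(t)\Vert\geq\varepsilon/2$ almost everywhere, hence $\Vert h-g\Vert_{p'}\geq\varepsilon/2$. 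This also shows your anticipated $p$-dependent constant is a red herring: because the perturbation is bounded below pointwise a.e., any $c<1/2$ works for every $p'$, and the geometry of $L_{p'}$ never enters. With this lemma inserted your induction and reduction close correctly (at limit ordinals only the inclusion $\mathcal F_\alpha\subseteq\bigcap_{\beta<\alpha}\mathcal F_\beta$ is needed, which is the direction that survives uncountably many null sets); without it, the proposal has a genuine gap at its crux.
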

\begin{remark}
The further relationship between the Szlenk and $w^\ast$-dentability indices can see \cite{Causey2017a}, \cite{Causey2022a} and \cite{HS}.
\end{remark}

\begin{definition}\label{3.4}
Let $X$ be a Banach space and $\{P_i\}_{i\in I}$ is a family of projections on $X$. We said $\{P_i\}_{i\in I}$ is $uniformly~convex$ if $\Vert P_i\Vert\leq1$ for each $i\in I$, and for any $\varepsilon>0$, there exists $\delta(\varepsilon)>0$ such that $\Vert x-P_ix\Vert<\varepsilon$ for every $i\in I$, whenever $\Vert x\Vert=1$ and $\Vert x+P_ix\Vert>2-\delta(\varepsilon)$.
\end{definition}

\begin{theorem}\label{3.5}
Let $X$ be a Banach space and $\{P_i\}_{i\in I}$ is a family of projections on $X$. The following are equivalent:
\begin{itemize}
\item [(a)]
$\{P_i\}_{i\in I}$ is uniformly convex.
\item [(b)]
For any $\varepsilon>0$, there exists $\delta(\varepsilon)>0$ such that $\Vert x-P_ix\Vert<\varepsilon$ for every $i\in I$, whenever $x\in B_X$ and $\Vert x\Vert-\Vert P_ix\Vert<\delta(\varepsilon)$.
\item [(c)]
For any $\varepsilon>0$, there exists $\delta(\varepsilon)>0$ such that $\Vert x-P_ix\Vert<\varepsilon$ for every $i\in I$, whenever $\Vert x\Vert=1$ and $\Vert P_ix\Vert>1-\delta(\varepsilon)$.
\end{itemize}
\end{theorem}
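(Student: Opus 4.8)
The plan is to prove the two equivalences $(a)\Leftrightarrow(c)$ and $(b)\Leftrightarrow(c)$, keeping in mind the standing assumption $\|P_i\|\le1$ that is built into the notion of a uniformly convex family. The whole argument rests on one elementary observation: since each $P_i$ is an idempotent of norm at most $1$, for every $x$ we have $P_i(x+P_ix)=2P_ix$, and therefore, when $\|x\|=1$,
\[
2\|P_ix\|=\|P_i(x+P_ix)\|\le\|x+P_ix\|\le\|x\|+\|P_ix\|=1+\|P_ix\|.
\]
This sandwich says precisely that, on the unit sphere, $\|x+P_ix\|$ is close to $2$ if and only if $\|P_ix\|$ is close to $1$, uniformly in $i$; this is the engine behind $(a)\Leftrightarrow(c)$.

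For $(a)\Rightarrow(c)$ I would argue as follows: given $\varepsilon>0$, let $\delta_a(\varepsilon)$ be the modulus furnished by $(a)$. If $\|x\|=1$ and $\|P_ix\|>1-\delta_a(\varepsilon)/2$, then the left inequality above gives $\|x+P_ix\|\ge2\|P_ix\|>2-\delta_a(\varepsilon)$, so $(a)$ yields $\|x-P_ix\|<\varepsilon$; thus $\delta_c(\varepsilon)=\delta_a(\varepsilon)/2$ works. Conversely, for $(c)\Rightarrow(a)$, if $\|x\|=1$ and $\|x+P_ix\|>2-\delta_c(\varepsilon)$, then the right inequality gives $\|P_ix\|\ge\|x+P_ix\|-1>1-\delta_c(\varepsilon)$, and $(c)$ closes the argument with $\delta_a(\varepsilon)=\delta_c(\varepsilon)$.

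The pair $(b)\Leftrightarrow(c)$ is where the only real work lies. The implication $(b)\Rightarrow(c)$ is immediate, since a unit vector $x$ with $\|P_ix\|>1-\delta$ satisfies $x\in B_X$ and $\|x\|-\|P_ix\|<\delta$, so $(b)$ applies verbatim. For $(c)\Rightarrow(b)$ I would pass from the ball to the sphere by normalization: given $x\in B_X$ with $\|x\|-\|P_ix\|<\delta$, set $y=x/\|x\|$ when $x\neq0$, so that $\|P_iy\|=\|P_ix\|/\|x\|>1-\delta/\|x\|$ and $\|x-P_ix\|=\|x\|\,\|y-P_iy\|\le\|y-P_iy\|$. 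The obstruction is that $\delta/\|x\|$ blows up for vectors of small norm, so I would split into two regimes: when $\|x\|\le\varepsilon/3$ the crude bound $\|x-P_ix\|\le\|x\|+\|P_ix\|\le2\|x\|<\varepsilon$ already suffices, while for $\|x\|>\varepsilon/3$ one has $\delta/\|x\|<3\delta/\varepsilon$, so choosing $\delta(\varepsilon)=\tfrac{\varepsilon}{3}\min\{1,\delta_c(\varepsilon)\}$ forces $\|P_iy\|>1-\delta_c(\varepsilon)$ and $(c)$ gives $\|y-P_iy\|<\varepsilon$, whence $\|x-P_ix\|<\varepsilon$.

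The main obstacle, then, is not any single hard inequality but the bookkeeping in $(c)\Rightarrow(b)$: because $(b)$ quantifies over the whole ball while $(c)$ only over the sphere, the normalization step must be protected against small-norm vectors by a separate crude estimate, and the two moduli must be combined so that a single $\delta(\varepsilon)$ serves both regimes. Everything else reduces to the idempotency identity $P_i(x+P_ix)=2P_ix$ together with $\|P_i\|\le1$.
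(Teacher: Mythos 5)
Your inequalities are sound and, apart from rearranging the implications (you prove $(a)\Leftrightarrow(c)$ and $(b)\Leftrightarrow(c)$ where the paper runs the cycle $(a)\Rightarrow(b)\Rightarrow(c)\Rightarrow(a)$), the computations are essentially the paper's: your $(c)\Rightarrow(b)$ is the same normalization-plus-small-norm split the paper uses for $(a)\Rightarrow(b)$, and your $(a)\Leftrightarrow(c)$ rests on the same identity $P_i(x+P_ix)=2P_ix$ and triangle inequality. However, there is one genuine gap. You declare $\Vert P_i\Vert\le 1$ to be a ``standing assumption \dots built into the notion of a uniformly convex family.'' That bound is part of condition (a) only (it is a clause of Definition \ref{3.4}); conditions (b) and (c) say nothing about $\Vert P_i\Vert$. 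Consequently your proof of $(c)\Rightarrow(a)$ establishes only the modulus clause of (a) while silently assuming the clause $\Vert P_i\Vert\le 1$ that this implication is supposed to deliver --- as written it is circular on that point. The same unproved bound also enters your small-norm regime of $(c)\Rightarrow(b)$, where the estimate $\Vert x-P_ix\Vert\le\Vert x\Vert+\Vert P_ix\Vert\le 2\Vert x\Vert$ invokes $\Vert P_ix\Vert\le\Vert x\Vert$.

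The repair is short and is exactly the step the paper inserts at the start of its $(c)\Rightarrow(a)$: assuming (c), if $\Vert P_i\Vert>1$ for some $i$, pick $x$ with $\Vert x\Vert=1$ and $\Vert P_ix\Vert>1$; then $\Vert P_ix\Vert>1-\delta(\varepsilon)$ for every $\varepsilon>0$, so (c) forces $\Vert x-P_ix\Vert<\varepsilon$ for all $\varepsilon$, hence $P_ix=x$, contradicting $\Vert P_ix\Vert>1=\Vert x\Vert$. With $\Vert P_i\Vert\le1$ thus derived from (c), both of your uses become legitimate. (Alternatively, the small-norm case of $(c)\Rightarrow(b)$ can be patched without the bound, since the hypothesis $\Vert x\Vert-\Vert P_ix\Vert<\delta(\varepsilon)$ already gives $\Vert x-P_ix\Vert<2\Vert x\Vert+\delta(\varepsilon)\le\varepsilon$ for your choice $\delta(\varepsilon)=\tfrac{\varepsilon}{3}\min\{1,\delta_c(\varepsilon)\}$; but the circularity in $(c)\Rightarrow(a)$ must be fixed by the argument above.) Once this is added, your proof is complete and is, in substance, the paper's.
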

\begin{proof}
(a)$\Rightarrow$(b): For any $\varepsilon>0$, $i\in I$, $x\in X$ with $\Vert x\Vert\leq1$ and $\Vert x\Vert-\Vert P_ix\Vert<\varepsilon\delta(\varepsilon)/4$. If $\Vert x\Vert<\varepsilon/2$, then
\[
\Vert x-P_ix\Vert\leq2\Vert x\Vert<\varepsilon.
\]
If $\Vert x\Vert\geq\varepsilon/2$. We let $y=x/\Vert x\Vert$, then
\[
\Vert y+P_iy\Vert\geq\Vert P_i(y+P_iy)\Vert=2\Vert P_iy\Vert=2-2(\Vert y\Vert-\Vert P_iy\Vert)>2-\delta(\varepsilon)
\]
It follows that $\Vert x-P_ix\Vert\leq\Vert y-P_iy\Vert<\varepsilon$.

(b)$\Rightarrow$(c) is obvious. 

(c)$\Rightarrow$(a): First, we show that $\Vert P_i\Vert\leq1$ for each $i\in I$. If not, then there exists $\Vert x\Vert=1$ such that $\Vert P_ix\Vert>1$, this implies that $\Vert x-P_ix\Vert<\varepsilon$ for all $\varepsilon>0$. It follows that $P_ix=x$, which contradicts that $\Vert P_ix\Vert>1=\Vert x\Vert$. Now, for any $\varepsilon>0$, if $x\in X$ with $\Vert x\Vert=1$ and $\Vert x+P_ix\Vert>2-\delta(\varepsilon)$, then
\[
\Vert P_ix\Vert\geq\Vert x+P_ix\Vert-\Vert x\Vert>1-\delta(\varepsilon).
\]
It follows that $\Vert x-P_ix\Vert<\varepsilon$.
\end{proof}

\begin{definition}\label{3.6}
Let $X$ be a Banach space. $\{Y_i\}_{i\in I}$ is a family of closed subspaces of $X$ is said to satsifying $(\ast)$-$condition$ if there is a uniformly convex family of projections $\{P_i\}_{i\in I}$ on $X^\ast$ such that $\ker(P_i)={Y_i}^\perp$ for each $i\in I$. When $\{Y_i\}_{i\in I}$ is just a single subspace $Y_0$, we can also say that it satisfies $(\ast)$-$condition$ similarly.
\end{definition}

We denote by
\[
\delta_{\{Y_i\}}(\varepsilon)=\inf\big\{\Vert\varphi\Vert-\Vert P_i\varphi\Vert:i\in I,~\varphi\in B_{X^\ast},~\Vert\varphi-P_i\varphi\Vert\geq\varepsilon\big\},~~\forall\varepsilon\in(0,\infty),
\]
(with the convention that $\inf(\emptyset)=\infty$). By Theorem \ref{3.5}, we know that $\delta_{\{Y_i\}}(\varepsilon)>0$. A simple example is that $\{Y_i\}_{i\in I}$ is a family of M-ideals of $X$, i.e. for any $\varphi\in X^\ast$, $\Vert\varphi\Vert=\Vert P_i\varphi\Vert+\Vert\varphi-P_i\varphi\Vert$ for all $i\in I$. Since $\Vert P_i\Vert\leq1$, it is easy to see that $P_i(X^\ast)\cong X^\ast/{Y_i}^\perp\cong {Y_i}^\ast$ by setting
\[
\varphi\in P_i(X^\ast)\longmapsto\varphi|_{Y_i}\in {Y_i}^\ast.
\]

\begin{prop}\label{3.7}
Let $X$ be a Banach space and $\{Y_i\}_{i\in I}$ be a family of subspaces of $X$ satisfying $(\ast)$-condition.
\begin{itemize}
\item [(a)]
If there is a closed subspace of $Y$ of  $X$ such that $Y_i\subset Y$ for all $i\in I$. Then $\{Y_i\}_{i\in I}$ is a family of closed subspaces of $Y$ satisfying $(\ast)$-condition.
\item [(b)]
If there is a closed subspace $Z$ of $X$ such that $Z\subset Y_i$ for all $i\in I$. Then $\{Y_i/Z\}_{i\in I}$ is a family of closed subspaces of $X/Z$ satisfying $(\ast)$-condition.
\item [(c)]
If $X$ is Asplund and given a $p\in(1,\infty)$, then $\{L_p(Y_i)\}_{i\in I}$ is a family of closed subspaces of $L_p(X)$ satisfying $(\ast)$-condition.
\end{itemize}
\end{prop}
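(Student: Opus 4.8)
The three parts share a common template: in each case I would identify the dual of the new ambient space ($Y$, $X/Z$, or $L_p(X)$) with a concrete subspace or quotient of a space on which the $P_i$ already act, transport the $P_i$ to projections on that dual with the prescribed kernels, and then verify uniform convexity by checking criterion (b) of Theorem \ref{3.5} in terms of the modulus $\delta_{\{Y_i\}}$. Throughout I will use that $P_i=J_i\circ(\,\cdot\,|_{Y_i})$, i.e. $(P_i\varphi)|_{Y_i}=\varphi|_{Y_i}$ (because $\varphi-P_i\varphi\in\ker P_i=Y_i^\perp$) and $P_i(X^\ast)\cong Y_i^\ast$ isometrically via restriction, as recorded just before the statement.

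For (a), identify $Y^\ast$ with $X^\ast/Y^\perp$ via restriction. Since $Y_i\subset Y$ we have $Y^\perp\subset Y_i^\perp=\ker P_i$, so each $P_i$ kills $Y^\perp$ and therefore descends to a well-defined map $Q_i(\varphi+Y^\perp)=P_i\varphi+Y^\perp$ on $X^\ast/Y^\perp$. A direct check shows $Q_i$ is a projection with $\Vert Q_i\Vert\le1$ and $\ker Q_i=Y_i^\perp/Y^\perp$, which under the identification is exactly the annihilator of $Y_i$ in $Y^\ast$. To transfer uniform convexity I use criterion (b) of Theorem \ref{3.5}: given $g\in B_{Y^\ast}$, choose a Hahn--Banach extension $\varphi\in X^\ast$ with $\Vert\varphi\Vert=\Vert g\Vert$; then $\Vert Q_i g\Vert\le\Vert P_i\varphi\Vert$ gives $\Vert\varphi\Vert-\Vert P_i\varphi\Vert\le\Vert g\Vert-\Vert Q_i g\Vert$, so the hypothesis $\Vert g\Vert-\Vert Q_i g\Vert<\delta_{\{Y_i\}}(\varepsilon)$ forces $\Vert\varphi-P_i\varphi\Vert<\varepsilon$, whence $\Vert g-Q_i g\Vert\le\Vert\varphi-P_i\varphi\Vert<\varepsilon$.

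For (b), identify $(X/Z)^\ast$ with $Z^\perp\subset X^\ast$. The key observation is that $Z^\perp$ is $P_i$-invariant: from $(P_i\varphi)|_{Y_i}=\varphi|_{Y_i}$ and $Z\subset Y_i$ we get $(P_i\varphi)|_Z=\varphi|_Z$, so $\varphi\in Z^\perp$ implies $P_i\varphi\in Z^\perp$. Hence $R_i:=P_i|_{Z^\perp}$ is a projection on $(X/Z)^\ast$ with $\Vert R_i\Vert\le1$ and $\ker R_i=Y_i^\perp\cap Z^\perp=Y_i^\perp$ (using $Y_i^\perp\subset Z^\perp$), which is the annihilator of $Y_i/Z$. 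Here uniform convexity is inherited for free: the defining inequality of Definition \ref{3.4} for $R_i$ on $Z^\perp$ is literally a restriction of the same inequality for $P_i$ on $X^\ast$, since the norms agree and $R_i\varphi=P_i\varphi$ on $Z^\perp$.

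Part (c) is the substantive one and is where I expect the \emph{main obstacle}. Since $X$ is Asplund I may use $L_p(X)^\ast\cong L_{p'}(X^\ast)$ and $L_p(Y_i)^\perp\cong L_{p'}(Y_i^\perp)$, and define $\widetilde{P}_i$ on $L_{p'}(X^\ast)$ pointwise by $(\widetilde{P}_i f)(t)=P_i(f(t))$; this is a projection with $\Vert\widetilde{P}_i\Vert\le1$ and $\ker\widetilde{P}_i=L_{p'}(Y_i^\perp)\cong L_p(Y_i)^\perp$. The difficulty is the uniform convexity of $\{\widetilde{P}_i\}$, which I would again establish via criterion (b) by passing from the pointwise modulus $\delta_{\{Y_i\}}$ to an integral estimate. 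Concretely, for $f\in B_{L_{p'}(X^\ast)}$ set $a(t)=\Vert f(t)\Vert$, $b(t)=\Vert P_i f(t)\Vert$, $c(t)=\Vert f(t)-P_i f(t)\Vert$, so $c\le 2a$ pointwise. Assuming $\Vert f-\widetilde{P}_i f\Vert_{p'}\ge\varepsilon$, I split $[0,1]$ according to whether the relative defect $c(t)/a(t)$ exceeds a threshold $\eta\sim\varepsilon$: on the small part the contribution to $\int c^{p'}$ is at most $\eta^{p'}$, so the large part $G$ carries at least $\varepsilon^{p'}/2$ of it, which bounds $\int_G a^{p'}$ below; on $G$, homogeneity of $P_i$ lets me apply the normalized modulus to obtain $b(t)\le(1-\delta_{\{Y_i\}}(\eta))\,a(t)$. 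Combining these yields a positive lower bound $\Vert f\Vert_{p'}^{p'}-\Vert\widetilde{P}_i f\Vert_{p'}^{p'}\ge\kappa(\varepsilon)>0$, and since both norms lie in $[0,1]$ the elementary inequality $u^{p'}-v^{p'}\le p'(u-v)$ converts this into $\Vert f\Vert_{p'}-\Vert\widetilde{P}_i f\Vert_{p'}\ge\kappa(\varepsilon)/p'$; taking the contrapositive gives criterion (b) for $\{\widetilde{P}_i\}$. The only delicate points are the choice of $\eta$ and the homogeneity rescaling needed to invoke $\delta_{\{Y_i\}}$ pointwise.
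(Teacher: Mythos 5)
Your proposal is correct and takes essentially the same route as the paper: the same induced projections $Q_i(\varphi+Y^\perp)=P_i\varphi+Y^\perp$ on $X^\ast/Y^\perp\cong Y^\ast$ for (a) (your Hahn--Banach extension of $g$ plays exactly the role of the paper's norm-attaining coset representative obtained from $w^\ast$-compactness of $B_{Y^\perp}$), the restriction of $P_i$ to the $P_i$-invariant subspace $Z^\perp\cong(X/Z)^\ast$ for (b), and the pointwise projections $(\widetilde{P}_i f)(t)=P_i(f(t))$ on $L_{p'}(X^\ast)\cong L_p(X)^\ast$ for (c). In (c) you merely run the paper's estimate in contrapositive form --- splitting $[0,1]$ by the relative size of $\Vert f(t)-P_i f(t)\Vert$ rather than of $\Vert f(t)\Vert-\Vert P_i f(t)\Vert$, with the same homogeneity rescaling of $\delta_{\{Y_i\}}$ and the same elementary inequality $u^{p'}-v^{p'}\leq p'(u-v)$ on $[0,1]$ --- so the two verifications are mirror images of each other.
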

\begin{proof}
\begin{itemize}
\item [(a):]
We know that $X^\ast/Y^\perp\cong Y^\ast$ and 
\[
{Y_i}^\perp/Y^\perp\cong\{\phi\in Y^\ast:\phi|_{Y_i}=0\}
\]
by natural isomorphism. We denote
\[
Q_i:\varphi+Y^\perp\in X^\ast/Y^\perp\longmapsto P_i(\varphi)+Y^\perp,
\]
which is a projection on $X^\ast/Y^\perp$ induced by $P_i$ with $\ker(Q_i)={Y_i}^\perp/Y^\perp$ for all $i\in I$. Since $\Vert P_i\Vert\leq1$, this implies that $\Vert P_i(\varphi)+Y^\perp\Vert=\Vert P_i(\varphi)\Vert$. For any $\varepsilon>0$, if $\varphi+Y^\perp\in B_{X^\ast/Y^\perp}$ with $\Vert\varphi+Y^\perp\Vert-\Vert Q_i(\varphi+Y^\perp)\Vert<\delta_{\{Y_i\}}(\varepsilon)$. By the $w^\ast$-compactness of $B_{Y^\perp}$, without loss of generality we can assume that $\Vert\varphi+Y^\perp\Vert=\Vert\varphi\Vert$. Therefore, $\Vert\varphi\Vert-\Vert P_i(\varphi)\Vert<\delta_{\{Y_i\}}(\varepsilon)$, it follows that $\Vert(\varphi+Y^\perp)-Q_i(\varphi+Y^\perp)\Vert=\Vert\varphi-P_i(\varphi)+Y^\perp\Vert\leq\Vert\varphi-P_i\varphi\Vert<\varepsilon$. Therefore, $\{Y_i\}_{i\in I}$ is a family of closed subspaces of $Y$ satisfying $(\ast)$-condition by Theorem \ref{3.5}.
\item[(b):]
We know that $Z^\perp\cong(X/Z)^\ast$, and $(Y_i/Z)^\perp\cong {Y_i}^\perp$ for all $i\in I$ by natural isomorphism. It is easy to see that $\{Y_i/Z\}_{i\in I}$ is a family of closed subspaces of $X/Z$ satisfying $(\ast)$-condition.
\item[(c):] Let $\{P_i\}$ is a family of projection on $X^\ast$ with the property in Definition \ref{3.6}. We denote $Q_i$ is the projection on $L_p(X)^\ast=L_{p'}(X^\ast)$ by given $(Q_ih)(t)=P_i(h(t))$ for each $i\in I$. It is easy to that $\Vert Q_i\Vert\leq1$ and $\ker(Q_i)=L_p(Y_i)^{\perp}=L_p({Y_i}^\perp)$ for all $i\in I$. For any $\varepsilon>0$, $h\in B_{L_{p'}(X^\ast)}$ with
\[
\Vert h\Vert-\Vert Q_ih\Vert<\sigma(\varepsilon)
\]
where $\sigma(\varepsilon)=\frac{\delta_{\{Y_i\}}(\varepsilon/2^{1/p'})^{p'}\varepsilon^{p'}}{2^{p'+1}p'}$. Put
\[
A=\big\{t\in[0,1]:\Vert h(t)\Vert-\Vert P_i(h(t))\Vert\geq\delta_{\{Y_i\}}(\varepsilon/2^{1/p'})\Vert h(t)\Vert\big\}.
\]
Note that
\[
\int_A\big(\delta_{\{Y_i\}}(\varepsilon/2^{1/p'})\Vert h(t)\Vert\big)^{p'}dt\leq\int_A\big(\Vert h(t)\Vert-\Vert P_i(h(t))\Vert\big)^{p'}dt
\]
\[
\leq\int_0^1\big(\Vert h(t)\Vert-\Vert P_i(h(t))\Vert\big)^{p'}dt\leq\Vert h\Vert^{p'}-\Vert Q_ih\Vert^{p'}\leq p'\big(\Vert h\Vert-\Vert Q_ih\Vert\big)<p'\sigma(\varepsilon).
\]
we obtain
\[
\int_A\Vert h(t)\Vert^{p'}\leq\varepsilon^{p'}/2^{p'+1}.
\]
Therefore,
\[
\Vert h-Q_ih\Vert=\left(\int_A\Vert h(t)-P_i(h(t))\Vert^{p'}dt+\int_{[0,1]\setminus A}\Vert h(t)-P_i(h(t))\Vert^{p'}dt\right)^{1/p'}
\]
\[
\leq\left(\int_A\big(2\Vert h(t)\Vert\big)^{p'}dt+\int_{[0,1]\setminus A}\big(\varepsilon/2^{1/p'}\Vert h(t)\Vert\big)^{p'}dt\right)^{1/p'}<\left(\varepsilon^{p'}/2+\varepsilon^{p'}/2\right)^{1/p'}=\varepsilon.
\]
We conclude that  $\{L_p(Y_i)\}_{i\in I}$ is a family of closed subspaces of $L_p(X)$ satisfying $(\ast)$-condition by Theorem \ref{3.5}.
\end{itemize} 
\end{proof}

The following lemma idea comes from \cite[Lem. 3.3]{Lancien2007}.
\begin{lemma}\label{3.8}
	Let $X$ be a Banach space and $Y$ be a closed subspace of $X$ satisfying $(\ast)$-condition. Let $\alpha$ be an ordinal and $\varepsilon>0$. If $\varphi\in s_{3\varepsilon}^\alpha(B_{X^\ast})$ and $\Vert\varphi|_Y\Vert>1-\delta_{\{Y\}}(\varepsilon)$, then $\varphi|_Y\in s_\varepsilon^\alpha(B_{Y^\ast})$.
\end{lemma}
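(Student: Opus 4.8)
The plan is to argue by transfinite induction on $\alpha$, transporting the Szlenk derivation from $B_{X^\ast}$ to $B_{Y^\ast}$ through the restriction map $R\colon X^\ast\to Y^\ast$, $R\phi=\phi|_Y$. Before starting I would record the facts that make the $(\ast)$-condition usable. Let $P$ be the projection on $X^\ast$ with $\|P\|\le 1$ and $\ker P=Y^\perp$ supplied by Definition \ref{3.6}. Since $\ker P=Y^\perp=\ker R$ we have $R\phi=(P\phi)|_Y$, and because $R$ is isometric on $P(X^\ast)$ (the computation recorded after Definition \ref{3.6}) we get $\|\phi|_Y\|=\|P\phi\|$ and $\|\phi_1|_Y-\phi_2|_Y\|=\|P(\phi_1-\phi_2)\|$ for all $\phi,\phi_1,\phi_2\in X^\ast$; moreover $R$ is $w^\ast$-$w^\ast$ continuous. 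The decisive consequence of the definition of $\delta_{\{Y\}}(\varepsilon)$ is the following: if $\phi\in B_{X^\ast}$ and $\|P\phi\|>1-\delta_{\{Y\}}(\varepsilon)$, then $\|\phi\|-\|P\phi\|<\delta_{\{Y\}}(\varepsilon)$, so since $\delta_{\{Y\}}(\varepsilon)$ is an infimum taken over those $\phi$ with $\|\phi-P\phi\|\ge\varepsilon$, we must have $\|\phi-P\phi\|<\varepsilon$. In particular the hypothesis $\|\varphi|_Y\|>1-\delta_{\{Y\}}(\varepsilon)$ already forces $\|\varphi-P\varphi\|<\varepsilon$.

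The base case $\alpha=0$ is immediate since $R(B_{X^\ast})\subseteq B_{Y^\ast}$, and the limit case is formal: if $\varphi\in s_{3\varepsilon}^\alpha(B_{X^\ast})=\bigcap_{\beta<\alpha}s_{3\varepsilon}^\beta(B_{X^\ast})$, then the same $\varphi$ satisfies the hypothesis at every $\beta<\alpha$, so the inductive hypothesis gives $\varphi|_Y\in s_\varepsilon^\beta(B_{Y^\ast})$ for all $\beta<\alpha$, whence $\varphi|_Y\in s_\varepsilon^\alpha(B_{Y^\ast})$. The content lies in the successor step $\alpha=\beta+1$.

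For the successor step, fix an arbitrary $w^\ast$-neighbourhood $V$ of $\varphi|_Y$; I must show $\mathrm{diam}\big(V\cap s_\varepsilon^\beta(B_{Y^\ast})\big)>\varepsilon$. The maneuver that lets the inductive hypothesis apply to the \emph{inner} witnesses is to force a near-norming vector into $V$. Since $\|\varphi|_Y\|>1-\delta_{\{Y\}}(\varepsilon)$, choose $y_0\in Y$ with $\|y_0\|\le 1$ and $\mathrm{Re}\,\varphi(y_0)>1-\delta_{\{Y\}}(\varepsilon)$; shrinking $V$ (which only helps, as a smaller neighbourhood has smaller diameter) I may assume $V\subseteq\{\psi:|\psi(y_0)-\varphi(y_0)|<\eta\}$ with $\eta$ small enough that every $\psi\in V$ satisfies $\|\psi\|\ge\mathrm{Re}\,\psi(y_0)>1-\delta_{\{Y\}}(\varepsilon)$. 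Put $W=R^{-1}(V)$, a $w^\ast$-neighbourhood of $\varphi$ in $X^\ast$ by $w^\ast$-continuity of $R$. Because $\varphi\in s_{3\varepsilon}\big(s_{3\varepsilon}^\beta(B_{X^\ast})\big)$, there exist $\phi_1,\phi_2\in W\cap s_{3\varepsilon}^\beta(B_{X^\ast})$ with $\|\phi_1-\phi_2\|>3\varepsilon$. By construction each $\phi_i\in B_{X^\ast}$ restricts into $V$, so $\|\phi_i|_Y\|=\|P\phi_i\|>1-\delta_{\{Y\}}(\varepsilon)$; hence the inductive hypothesis, applied to $\phi_i$ at level $\beta$, yields $\phi_i|_Y\in s_\varepsilon^\beta(B_{Y^\ast})$, and clearly $\phi_i|_Y=R\phi_i\in V$. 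Finally, the consequence noted in the first paragraph gives $\|\phi_i-P\phi_i\|<\varepsilon$ for $i=1,2$, so
\[
\|\phi_1|_Y-\phi_2|_Y\|=\|P(\phi_1-\phi_2)\|\ge\|\phi_1-\phi_2\|-\|\phi_1-P\phi_1\|-\|\phi_2-P\phi_2\|>3\varepsilon-\varepsilon-\varepsilon=\varepsilon.
\]
Thus $\phi_1|_Y,\phi_2|_Y\in V\cap s_\varepsilon^\beta(B_{Y^\ast})$ witness $\mathrm{diam}>\varepsilon$, and since $V$ was arbitrary we conclude $\varphi|_Y\in s_\varepsilon^{\beta+1}(B_{Y^\ast})$, completing the induction.

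The step I expect to be the crux is precisely this successor step, and within it the only genuine difficulty is that the inductive hypothesis at level $\beta$ again demands the norm lower bound $\|\phi_i|_Y\|>1-\delta_{\{Y\}}(\varepsilon)$ for the inner witnesses $\phi_i$. Forcing $y_0$ into the test neighbourhood $V$ is exactly what secures this, and the definition of $\delta_{\{Y\}}$ is what converts that same bound into the norm estimate $\|\phi_i-P\phi_i\|<\varepsilon$ driving the diameter computation; the factors $3\varepsilon$ and $\delta_{\{Y\}}(\varepsilon)$ in the statement are calibrated so that the two projection losses, each below $\varepsilon$, leave a surviving gap strictly above $\varepsilon$.
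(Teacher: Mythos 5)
Your proof is correct and takes essentially the same route as the paper's: transfinite induction on $\alpha$, with the successor step handled by forcing a near-norming vector $y_0\in B_Y$ into the test neighbourhood so that the inner witnesses satisfy $\Vert\phi_i|_Y\Vert>1-\delta_{\{Y\}}(\varepsilon)$, after which the inductive hypothesis and the $(\ast)$-condition estimate $\Vert\phi_i-P\phi_i\Vert<\varepsilon$ convert a gap exceeding $3\varepsilon$ in $X^\ast$ into a gap exceeding $\varepsilon$ in $Y^\ast$. The only cosmetic difference is that you shrink $V$ inside $Y^\ast$ and pull back via $W=R^{-1}(V)$, whereas the paper builds the norming condition $|\phi(y)|>1-\delta(\varepsilon)$ directly into a neighbourhood $W$ of $\varphi$ in $X^\ast$; the two maneuvers are equivalent.
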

\begin{proof}
We denote, for any ordinal $\alpha$, by $(H_\alpha)$ the implication to be proved. This will be done by transfinite induction on $\alpha$. $(H_0)$ is trivially true. For arbitrary an ordinal $\alpha$, we assume for all ordinal $\beta<\alpha$, $(H_\beta)$ is true. It also passes easily to limit ordinal. So we let $\alpha=\beta+1$ and assume now that it is true for $\beta$. Since $\Vert\varphi|_Y\Vert>1-\delta(\varepsilon)$, there is $y\in B_Y$ such that $|\varphi(y)|>1-\delta(\varepsilon)$. For any $w^\ast$-neighborhood $V$ of $\varphi|_Y$, we let
	\[
		W=\{\phi\in X^\ast:\phi|_Y\in V~{\rm and}~|\phi(y)|>1-\delta(\varepsilon)\}.
	\]
Then $W$ is a $w^\ast$-neighborhood of $\varphi$, so ${\rm diam}(W\cap s_{3\varepsilon}^\beta(B_{X^\ast}))>3\varepsilon$. Now, we let $P$ is the projection on $X^\ast$ with the property in Definition \ref{3.6} respect to $\{Y\}$. For any $\phi,\psi\in W\cap s_{3\varepsilon}^\beta(B_{X^\ast})$, then $\Vert\phi|_Y\Vert=\Vert P\phi\Vert,\Vert\psi|_Y\Vert=\Vert P\psi\Vert >1-\delta(\varepsilon)$. By the induction hypothesis, $\phi|_Y,\psi|_Y\in V\cap s_\varepsilon^\beta(B_{Y^\ast})$ and By $Y$ satisfying $(\ast)$-condition, $\Vert\phi-P\phi\Vert,\Vert\psi-P\psi\Vert<\varepsilon$. 
	\[
	\Vert\phi|_Y-\psi|_Y\Vert=\Vert P(\phi-\psi)\Vert\geq\Vert\phi-\psi\Vert-\Vert\phi-P\phi\Vert-\Vert\psi-P\psi\Vert>\Vert\phi-\psi\Vert-2\varepsilon.
	\]
	By the arbitrariness of $\phi$ and $\psi$, it follows that
	\[
	{\rm diam}(V\cap s_\varepsilon^\beta(B_{Y^\ast}))>\varepsilon.
	\]
	So $\varphi|_I\in s_\varepsilon^\alpha(B_{Y^\ast})$. This finishes our induction.
	\end{proof}
\begin{theorem}\label{3.9}
	Let $X$ be a Banach space and $Y$ be a closed subspace of $X$ satisfying $(\ast)$-condition. Then
	\[
	Sz(X)=\max\{Sz(Y),Sz(X/Y)\}.
	\] 
\end{theorem}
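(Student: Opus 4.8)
The plan is to prove the two inequalities separately. The bound $\max\{Sz(Y),Sz(X/Y)\}\le Sz(X)$ is immediate from Proposition~\ref{3.1}(b) and uses nothing about the $(\ast)$-condition, so the real content is the reverse estimate. Here I may assume $Sz(Y),Sz(X/Y)<\infty$, since otherwise Proposition~\ref{3.1}(b) already forces $Sz(X)=\infty$ and both sides are $\infty$. By Proposition~\ref{3.1}(c) I then write $Sz(Y)=\omega^{\xi_1}$, $Sz(X/Y)=\omega^{\xi_2}$, set $\xi=\max\{\xi_1,\xi_2\}$ and $\alpha=\omega^\xi=\max\{Sz(Y),Sz(X/Y)\}$. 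The strategy is to prove $Sz(X)\le 2\alpha$ by a single application of the sum estimate of Proposition~\ref{3.1}(e), and then to remove the factor $2$ using that $Sz(X)$ is a power of $\omega$.

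For the sum estimate I would split $B_{X^\ast}$ along the projection $P$ provided by the $(\ast)$-condition (Definition~\ref{3.6}): $\ker P=Y^\perp$, $\|P\|\le 1$, and, writing $R\colon X^\ast\to Y^\ast$ for the $w^\ast$-continuous restriction map, one checks $RP=R$, $\|P\varphi\|=\|R\varphi\|=\|\varphi|_Y\|$ for every $\varphi$, and $R$ is an isometry of $P(X^\ast)$ onto $Y^\ast$. Decomposing $\varphi=P\varphi+(\varphi-P\varphi)$, with $\varphi-P\varphi\in Y^\perp$ and $\|\varphi-P\varphi\|\le 2$, yields $B_{X^\ast}\subset K+L$ with $L=2B_{Y^\perp}$ and $K=\overline{P(B_{X^\ast})}^{\,w^\ast}$, both $w^\ast$-compact. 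Since $Y^\perp$ is a $w^\ast$-closed subspace carrying, via the adjoint of the quotient map $X\to X/Y$, an isometric and $w^\ast$-homeomorphic copy of $(X/Y)^\ast$, scaling invariance of the Szlenk index gives $Sz(L)=Sz(B_{(X/Y)^\ast})=Sz(X/Y)\le\alpha$, hence $s_\varepsilon^{\alpha}(L)=\emptyset$ for all $\varepsilon>0$. Granting the analogous fact $s_\varepsilon^{\alpha}(K)=\emptyset$ for all $\varepsilon>0$, Proposition~\ref{3.1}(e) applied with this $K,L$, exponent $\xi$, and $m=2$ makes every summand $s_\varepsilon^{\omega^\xi i}(K)+s_\varepsilon^{\omega^\xi j}(L)$ with $i+j=2$ empty, whence
\[
s_{4\varepsilon}^{\,\omega^\xi\cdot 2}\big(B_{X^\ast}\big)=\emptyset\qquad\text{for all }\varepsilon>0,
\]
and therefore $Sz(X)\le 2\alpha$.

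The factor $2$ is then spurious: $Sz(X)\le 2\alpha<\omega^{\xi+1}$ shows $Sz(X)<\infty$, so by Proposition~\ref{3.1}(c) we have $Sz(X)=\omega^\zeta$ for some $\zeta$, and the only power of $\omega$ lying in $[\omega^\xi,\omega^\xi\cdot 2]$ is $\omega^\xi$ itself. Hence $Sz(X)=\omega^\xi=\alpha=\max\{Sz(Y),Sz(X/Y)\}$, as desired.

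The genuine obstacle is the bound $s_\varepsilon^\alpha(K)=\emptyset$, i.e. $Sz(K)\le\alpha$, for $K=\overline{P(B_{X^\ast})}^{\,w^\ast}$. When $P$ is $w^\ast$-continuous --- which is exactly the situation in the intended C*-algebra applications, where $Y$ is an ideal and $P$ is multiplication by a central projection of the bidual --- the range $P(X^\ast)$ is $w^\ast$-closed, $K=B_{P(X^\ast)}$ is isometric and $w^\ast$-homeomorphic to $B_{Y^\ast}$, and $Sz(K)=Sz(Y)\le\alpha$ is immediate. For the abstract $(\ast)$-condition $P$ need not be $w^\ast$-continuous; the $Y$-direction and the quotient direction are not symmetric, and one cannot place the two pieces of $B_{X^\ast}$ in complementary $w^\ast$-closed subspaces. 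The tool to get around this is Lemma~\ref{3.8}, whose proof never invokes $w^\ast$-continuity of $P$ but only the quantitative estimate of Theorem~\ref{3.5}, namely $\|\varphi-P\varphi\|<\varepsilon$ as soon as $\|\varphi|_Y\|=\|P\varphi\|$ is within $\delta_{\{Y\}}(\varepsilon)$ of its maximum. Running that transfinite induction on $K$ in place of $B_{X^\ast}$ transfers a large $X^\ast$-diameter through $R$ into a large $Y^\ast$-diameter and kills, after $Sz(Y)$ derivations, the part of the derived sets of $K$ where $\|\varphi|_Y\|$ is near-maximal; the remaining mass of $K$ accumulates in $Y^\perp$ and is governed by $Sz(X/Y)$. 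Verifying rigorously that the passage to the $w^\ast$-closure does not inflate the index beyond $\alpha$ --- reconciling these two contributions without a $w^\ast$-continuous splitting --- is the step I expect to demand the most care.
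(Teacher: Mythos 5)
Your reduction is clean up to one point, but that point is the whole theorem, and your proposal leaves it unproved. Everything hinges on the claim $Sz(K)\le\alpha$ for $K=\overline{P(B_{X^\ast})}^{\,w^\ast}$, which you introduce with ``granting the analogous fact.'' The transfer mechanism you sketch cannot deliver it: the restriction map $R$ is injective and isometric on $P(X^\ast)$, but neither property survives passage to the $w^\ast$-closure. Since $\|\cdot\|$ is only $w^\ast$-lower semicontinuous, limits of nets in $P(B_{X^\ast})$ can lose norm under $R$, and $K$ can pick up points of $Y^\perp$ on which $R$ collapses. A concrete instance: $X=C([0,1])$, $Y=\{f:f(0)=0\}$, $P\mu=\mu|_{(0,1]}$; then $\delta_{1/n}=P\delta_{1/n}\to\delta_0$ $w^\ast$, so $\delta_0\in K$ while $R\delta_0=0=R0$ with $0\in K$, so $R|_K$ is not injective. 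This example also refutes your parenthetical escape hatch: even in the C*-algebra application (Lemma \ref{4.11}, $P_\xi\varphi=\varphi z_\xi$ with $z_\xi$ a central projection of the bidual) the projection is \emph{not} $w^\ast$-continuous and $P(X^\ast)$ is not $w^\ast$-closed, so the ``easy case'' you carve out does not cover the intended use. Your fallback --- ``run the induction of Lemma \ref{3.8} on $K$'' --- also does not close the gap: Lemma \ref{3.8} only transfers information at points where $\|\varphi|_Y\|$ is within $\delta_{\{Y\}}(\varepsilon)$ of $1$, and says nothing about the part of the derived sets of $K$ where the restriction norm is small; that part is not governed by $Sz(X/Y)$ via any single splitting, which is exactly the asymmetry you yourself flag. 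Note also that, given your decomposition $B_{X^\ast}\subset K+2B_{Y^\perp}$, the missing claim is formally \emph{equivalent} to the theorem: conversely $K\subset B_{X^\ast}$ and Proposition \ref{3.1}(d) give $Sz(K)\le Sz(X)$, so you have reduced the theorem to a statement of the same depth.

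The paper's proof avoids ever forming a $w^\ast$-compact set carrying the $P$-part. For each $\varepsilon>0$ it chooses $\beta<\alpha$ and a finite $r$ with $Sz_{3\varepsilon}(2B_{Y^\perp}),\,Sz_\varepsilon(Y)\le\omega^\beta r$, and proves by induction on $m$ the geometric-decay inclusion
\[
s_{12\varepsilon}^{\omega^\beta(2mr)}\big(B_{X^\ast}\big)\subset 2B_{Y^\perp}+(1-\delta(\varepsilon))^{m}B_{X^\ast},
\]
applying Proposition \ref{3.1}(e) \emph{repeatedly} (once per round, not once with $m=2$) and using Lemma \ref{3.8} at each round to force $\|P\varphi\|\le(1-\delta(\varepsilon))^{m+1}$ on the derived set: if the restriction norm stayed near-maximal, Lemma \ref{3.8} would produce an element of $s_\varepsilon^{\omega^\beta r}(B_{Y^\ast})$, contradicting $Sz_\varepsilon(Y)\le\omega^\beta r$. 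After $m_0$ rounds with $(1-\delta(\varepsilon))^{m_0}<\varepsilon$, a final block of $\omega^\beta r$ derivations annihilates the $2B_{Y^\perp}$ part, giving $Sz_{12\varepsilon}(B_{X^\ast})<\omega^{\beta+1}\le\omega^\alpha$; the $\varepsilon$-quantified indices with $\beta$ strictly below $\alpha$ do the work your cleanup-by-powers-of-$\omega$ step was meant to do. So your lower bound, your decomposition $\varphi=P\varphi+(\varphi-P\varphi)$, and your use of Lemma \ref{3.8} are all pointed in the right direction, but the single-shot application of Proposition \ref{3.1}(e) must be replaced by this iteration; as written, the proposal has a genuine gap at its central step.
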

\begin{proof}	
If $\max\{Sz(Y),Sz(X/Y)\}=\infty$ is trivial by Proposition \ref{3.1} (b). Therefore, we assume that
\[
\max\{Sz(Y),Sz(X/Y)\}=\omega^\alpha
\]
for some ordinal $\alpha$. By Theorem \ref{3.5}, we let $P$ is the projection on $X^\ast$ with $\ker P=Y^\perp$ such that for any $\varepsilon>0$, there exist $\delta(\varepsilon)\in(0,1)$, such that for any $\varphi\in B_{X^\ast}$ with $\Vert\varphi\Vert-\Vert P\varphi\Vert<\delta(\varepsilon)$, then $\Vert\varphi-P\varphi\Vert<\varepsilon$. By natural isomorphism $(X/Y)^\ast\cong Y^\perp$, it is easy to verify that this isomorphism is $w^\ast$-$w^\ast$ homeomorphism and linearly isometric onto. so
	\[
	Sz(X/Y)=Sz(B_{Y^\perp})\leq\omega^\alpha.
	\]
For any $\varepsilon>0$, there exist $\beta<\alpha$ and $r\in\mathbb N$ such that $Sz_{3\varepsilon}(2B_{Y^\perp}),Sz_\varepsilon(Y)\leq\omega^\beta r$. Next, we will inductively prove that 
\[
s_{12\varepsilon}^{\omega^\beta(2mr)}\big(B_{X^\ast}\big)\subset2B_{Y^\perp}+(1-\delta(\varepsilon))^mB_{X^\ast}
\]
for all $m\in\mathbb N$. For $m=0$ it is trivially true. Assume that the conclusion holds for $m$, we must deduce that the conclusion is also true for $m+1$.
\[
s_{12\varepsilon}^{\omega^\beta(2(m+1)r)}\big(B_{X^\ast}\big)=s_{12\varepsilon}^{\omega^\beta(2r)}\Big(s_{12\varepsilon}^{\omega^\beta(2mr)}(B_{X^\ast})\Big).
\]
By the inductive hypothesis and Proposition \ref{3.1} (e),
\[
s_{12\varepsilon}^{\omega^\beta(2(m+1)r)}\big(B_{X^\ast}\big)\subset s_{12\varepsilon}^{\omega^\beta(2r)}\big(2B_{Y^\perp}+(1-\delta(\varepsilon))^mB_{X^\ast}\big)
\]
\[
\subset\bigcup_{i+j=2r}\big[s_{3\varepsilon}^{\omega^\beta i}\big(2B_{Y^\perp}\big)+s_{3\varepsilon}^{\omega^\beta j}\big((1-\delta(\varepsilon))^mB_{X^\ast}\big)\big]\subset2B_{Y^\perp}+s_{3\varepsilon}^{\omega^\beta r}\big((1-\delta(\varepsilon))^mB_{X^\ast}\big)
\]
\[
\subset2B_{Y^\perp}+(1-\delta(\varepsilon))^ms_{3\varepsilon/(1-\delta(\varepsilon))^m}^{\omega^\beta r}\big(B_{X^\ast}\big)\subset2B_{Y^\perp}+(1-\delta(\varepsilon))^ms_{3\varepsilon}^{\omega^\beta r}\big(B_{X^\ast}\big).
\]
For any $\varphi\in s_{12\varepsilon}^{\omega^\beta(2(m+1)r)}\big(B_{X^\ast}\big)$, then there exist $\psi\in s_{3\varepsilon}^{\omega^\beta r}\big(B_{X^\ast}\big)$ such that $\varphi|_Y=(1-\delta(\varepsilon))^m\psi|_Y$. It is easy to see that $\Vert\psi|_Y\Vert\leq1-\delta(\varepsilon)$. Indeed, if $\Vert\psi|_Y\Vert>1-\delta(\varepsilon)$, then $\psi|_Y\in s_\varepsilon^{\omega^\beta r}\big(B_{Y^\ast}\big)$ by Lemma \ref{3.6}. Which contradicts $Sz_\varepsilon(Y)\leq\omega^\beta r$. Therefore,
\[
\Vert P\varphi\Vert=\Vert\varphi|_Y\Vert=\Vert(1-\delta(\varepsilon))^m\psi|_Y\Vert\leq(1-\delta(\varepsilon))^{m+1}.
\]
Note that
 \[
 \varphi=(\varphi-P\varphi)+P\varphi\in2B_{Y^\perp}+(1-\delta(\varepsilon))^{m+1}B_{X^\ast}.
 \]
So
\[
s_{12\varepsilon}^{\omega^\beta(2(m+1)r)}\big(B_{X^\ast}\big)\subset2B_{Y^\perp}+(1-\delta(\varepsilon))^{m+1}B_{X^\ast}
\]
This finishes our induction. Fix a $m_0$ such that $(1-\delta(\varepsilon))^{m_0}<\varepsilon$, then
\[
s_{12\varepsilon}^{\omega^\beta(2m_0r)}\big(B_{X^\ast}\big)\subset2B_{Y^\perp}+\varepsilon B_{X^\ast}.
\]
By Proposition \ref{3.1} (e),
\[
s_{12\varepsilon}^{\omega^\beta(2m_0r+r)}\big(B_{X^\ast}\big)\subset s_{12\varepsilon}^{\omega^\beta r}\big(2B_{Y^\perp}+\varepsilon B_{X^\ast}\big)\subset\bigcup_{i+j=r}\big[s_{3\varepsilon}^{\omega^\beta i}\big(2B_{Y^\perp}\big)+s_{3\varepsilon}^{\omega^\beta j}\big(\varepsilon B_{X^\ast}\big)\big]
\]
\[
\subset s_{3\varepsilon}^{\omega^\beta r}\big(2B_{Y^\perp}\big)+\varepsilon B_{X^\ast}=\emptyset.
\]
This shows that $Sz_{12\varepsilon}(B_{X^\ast})<\omega^{\beta+1}\leq\omega^\alpha$. Therefore,
\[
	Sz(X)\leq\omega^\alpha=\max\{Sz(Y),Sz(X/Y)\}.
\]
which combined with Proposition \ref{3.1} (b), we can obtain that
	\[
	Sz(X)=\max\{Sz(Y),Sz(X/Y)\}.
	\]
\end{proof}
\begin{remark}
The research of the three space properties for Szlenk index of Banach spaces, that can see \cite{BL}, \cite{Causey2015} and \cite{Causey2020}.
\end{remark}
\begin{lemma}\label{3.11}
Let $X$ be a Banach space and $\{Y_i\}_{i\in I}$ be a family of subspaces of $X$ satisfying $(\ast)$-condition and $Sz(Y_i)\leq\omega^\alpha$ for all $i\in I$ and some ordinal $\alpha$. If
\[
\overline{{\bigcup}_{i\in I}Y_i}=X,
\]
then $Sz(X)\leq\omega^{\alpha+1}$.
\end{lemma}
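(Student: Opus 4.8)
The plan is to control the $\omega^\alpha$-th Szlenk derivative of $B_{X^\ast}$ by a uniform norm-drop and then apply Proposition \ref{3.1} (f). Since $\omega^\alpha\cdot\omega=\omega^{\alpha+1}$, it suffices to show that for every $\varepsilon>0$ there is $\delta'(\varepsilon)>0$ with $s_\varepsilon^{\omega^\alpha}(B_{X^\ast})\subset(1-\delta'(\varepsilon))B_{X^\ast}$; Proposition \ref{3.1} (f), applied with the ordinal $\omega^\alpha$, then yields $Sz(X)\leq\omega^\alpha\cdot\omega=\omega^{\alpha+1}$.

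First I would note that the hypothesis $Sz(Y_i)\leq\omega^\alpha$ forces $s_\varepsilon^{\omega^\alpha}(B_{Y_i^\ast})=\emptyset$ for all $\varepsilon>0$ and all $i\in I$, since $Sz_\varepsilon(Y_i)\leq Sz(Y_i)\leq\omega^\alpha$ and the derived sets are nested. The heart of the matter is to transport this emptiness up to $X$ via Lemma \ref{3.8}. Fix $\varepsilon>0$ and $\varphi\in s_{3\varepsilon}^{\omega^\alpha}(B_{X^\ast})$; I claim $\Vert\varphi|_{Y_i}\Vert\leq1-\delta_{\{Y_i\}}(\varepsilon)$ for every $i\in I$. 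Indeed, were $\Vert\varphi|_{Y_i}\Vert>1-\delta_{\{Y_i\}}(\varepsilon)$ for some $i$, then Lemma \ref{3.8} applied to the subspace $Y_i$ would place $\varphi|_{Y_i}\in s_\varepsilon^{\omega^\alpha}(B_{Y_i^\ast})=\emptyset$, a contradiction.

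Next I would invoke the density hypothesis $\overline{\bigcup_{i\in I}Y_i}=X$ to convert the fibrewise bounds into a bound on $\Vert\varphi\Vert$. A routine approximation shows $\Vert\varphi\Vert=\sup_{i\in I}\Vert\varphi|_{Y_i}\Vert$: choosing $x\in B_X$ with $|\varphi(x)|$ close to $\Vert\varphi\Vert$ and, by density, some $i$ and $y\in Y_i$ with $y$ close to $x$, one gets $\Vert\varphi|_{Y_i}\Vert$ arbitrarily close to $\Vert\varphi\Vert$. Combined with the claim this gives $\Vert\varphi\Vert\leq1-\delta_{\{Y_i\}}(\varepsilon)$, that is, $s_{3\varepsilon}^{\omega^\alpha}(B_{X^\ast})\subset(1-\delta_{\{Y_i\}}(\varepsilon))B_{X^\ast}$. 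Replacing $\varepsilon$ by $\varepsilon/3$ and putting $\delta'(\varepsilon)=\min\{\delta_{\{Y_i\}}(\varepsilon/3),1/2\}\in(0,1)$ furnishes the inclusion demanded in the first paragraph, and Proposition \ref{3.1} (f) closes the argument.

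The step I expect to require the most care is the use of Lemma \ref{3.8} with the uniform family modulus $\delta_{\{Y_i\}}$ rather than with the modulus of the individual subspace $Y_i$. Because $\delta_{\{Y_i\}}(\varepsilon)$ is the infimum over the whole family it is dominated by each single-subspace modulus, so the hypothesis of Lemma \ref{3.8} is met for every $Y_i$ and, crucially, the same $\delta'(\varepsilon)$ then controls all fibres at once. This uniformity, guaranteed precisely by the $(\ast)$-condition being posed for the family, is what makes the single application of Proposition \ref{3.1} (f) possible; the density hypothesis is consumed only in the passage $\Vert\varphi\Vert=\sup_{i\in I}\Vert\varphi|_{Y_i}\Vert$.
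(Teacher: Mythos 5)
Your proof is correct and takes essentially the same route as the paper's: both apply Lemma \ref{3.8} with the uniform family modulus $\delta_{\{Y_i\}}$ (taken at $\varepsilon/3$) to conclude that $s_\varepsilon^{\omega^\alpha}(B_{X^\ast})\subset\bigl(1-\delta_{\{Y_i\}}(\varepsilon/3)\bigr)B_{X^\ast}$, and then finish with Proposition \ref{3.1} (f) via $\omega^\alpha\cdot\omega=\omega^{\alpha+1}$. The only cosmetic difference is that the paper argues contrapositively, using the density of $\bigcup_{i\in I}Y_i$ to extract a single $i_0$ with $\Vert\varphi|_{Y_{i_0}}\Vert>1-\delta_{\{Y_i\}}(\varepsilon/3)$, whereas you bound every restriction and recover $\Vert\varphi\Vert=\sup_{i\in I}\Vert\varphi|_{Y_i}\Vert$ explicitly --- the same computation read in the opposite direction.
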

\begin{proof}
For any $\varepsilon>0$ and any $\varphi\in s_{\varepsilon}^{\omega^\alpha}(B_{X^\ast})$, if $\Vert\varphi\Vert>1-\delta_{\{Y_i\}}(\varepsilon/3)$, then there exists $Y_{i_0}$ such that $\Vert\varphi|_{Y_{i_0}}\Vert>1-\delta_{\{Y_i\}}(\varepsilon/3)$. By Lemma \ref{3.6}, $\varphi|_{Y_{i_0}}\in s_{\varepsilon/3}^{\omega^\alpha}(B_{{Y_{i_0}}^\ast})$, which contradicts $Sz(Y_{i_0})\leq\omega^\alpha$. Therefore, $s_{\varepsilon}^{\omega^\alpha}(B_{X^\ast})\subset(1-\delta_{\{Y_i\}}(\varepsilon/3))B_{X^\ast}$, it follows that $Sz(X)\leq\omega^\alpha\cdot\omega=\omega^{\alpha+1}$ by Proposition \ref{3.1} (f).
\end{proof}

\begin{theorem}\label{3.12}
Let $X$ be a Banach space, there is a family of closed subspaces $\{Y_\xi\}_{\xi\leq\alpha}$ of $X$ satisfying $(\ast)$-condition, indexed by the ordinals $\xi$, $0\leq\xi\leq\alpha$, such that $Y_{\xi_1}\subset Y_{\xi_2}$ if $\xi_1<\xi_2\leq\alpha$, such that $Y_0=\{0\}$ and $Y_\alpha=X$, and such that $Y_\xi=[\cup_{\zeta<\xi}Y_{\zeta}]^{-}$ for each limit ordinal $\xi$. If $\alpha<\omega^{1+\beta}$ and $Sz(Y_{\xi+1}/Y_\xi)\leq\omega^\gamma$ for each $\xi$, $0\leq\xi<\alpha$, then $Sz(X)\leq\omega^{\gamma+\beta}$.
\end{theorem}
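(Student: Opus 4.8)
The plan is to argue by transfinite induction on $\beta$, proving simultaneously for every ordinal $\gamma$, every Banach space $X$, and every filtration $\{Y_\xi\}_{\xi\le\alpha}$ of the stated kind the implication: if $\alpha<\omega^{1+\beta}$ and $Sz(Y_{\xi+1}/Y_\xi)\le\omega^\gamma$ for all $\xi<\alpha$, then $Sz(X)\le\omega^{\gamma+\beta}$. Before starting I would record the elementary \emph{finite chain} fact: if $\{0\}=Z_0\subset Z_1\subset\cdots\subset Z_n=X$ is a finite increasing chain of closed subspaces each satisfying $(\ast)$-condition, then $Sz(X)=\max_{0\le k<n}Sz(Z_{k+1}/Z_k)$. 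This follows by iterating Theorem \ref{3.9}, noting that $Z_k$ satisfies $(\ast)$-condition inside $Z_{k+1}$ by Proposition \ref{3.7}(a). The base case $\beta=0$ is exactly this fact: here $\alpha<\omega$ is finite, so the filtration is a finite chain and $Sz(X)=\max_\xi Sz(Y_{\xi+1}/Y_\xi)\le\omega^\gamma=\omega^{\gamma+0}$.

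For the successor case $\beta=\beta_0+1$ we have $\alpha<\omega^{1+\beta}=\omega^{1+\beta_0}\cdot\omega$, hence $\alpha<\omega^{1+\beta_0}\cdot n$ for some $n\in\mathbb N$. Set $\alpha_k=\min\{\omega^{1+\beta_0}k,\alpha\}$ and $Z_k=Y_{\alpha_k}$, giving a finite chain $\{0\}=Z_0\subset\cdots\subset Z_n=X$. By the finite chain fact it suffices to bound $Sz(Q_k)\le\omega^{\gamma+\beta}$ for each block $Q_k=Z_{k+1}/Z_k=Y_{\alpha_{k+1}}/Y_{\alpha_k}$. Inside $Q_k$ I would consider the induced filtration $W_\eta=Y_{\alpha_k+\eta}/Y_{\alpha_k}$ for $0\le\eta\le\delta_k:=\alpha_{k+1}-\alpha_k\le\omega^{1+\beta_0}$; it inherits all the required properties (nestedness, $W_0=\{0\}$, continuity at limits, and $(\ast)$-condition via Proposition \ref{3.7}(b) followed by (a)), and its consecutive quotients are isometric to $Y_{\alpha_k+\eta+1}/Y_{\alpha_k+\eta}$, hence of index $\le\omega^\gamma$. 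If $\delta_k<\omega^{1+\beta_0}$ (e.g. the last, partial block) the inductive hypothesis for $\beta_0$ gives $Sz(Q_k)\le\omega^{\gamma+\beta_0}\le\omega^{\gamma+\beta}$. If $\delta_k=\omega^{1+\beta_0}$, then for each $\eta<\omega^{1+\beta_0}$ the space $W_\eta$ carries a filtration of length $\eta<\omega^{1+\beta_0}$, so the inductive hypothesis yields $Sz(W_\eta)\le\omega^{\gamma+\beta_0}$ uniformly; since $W_{\omega^{1+\beta_0}}=Q_k=[\bigcup_{\eta<\omega^{1+\beta_0}}W_\eta]^-$ by continuity at the limit and the $W_\eta$ satisfy $(\ast)$-condition, Lemma \ref{3.11} upgrades this to $Sz(Q_k)\le\omega^{(\gamma+\beta_0)+1}=\omega^{\gamma+\beta}$.

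The limit case is short. For limit $\beta$ one has $\omega^{1+\beta}=\omega^{\beta}=\sup_{\beta_0<\beta}\omega^{1+\beta_0}$, so $\alpha<\omega^{1+\beta_0}$ for some $\beta_0<\beta$; the inductive hypothesis for $\beta_0$ then gives $Sz(X)\le\omega^{\gamma+\beta_0}\le\omega^{\gamma+\beta}$ directly, with no further work.

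The only genuinely delicate point is the full block $\delta_k=\omega^{1+\beta_0}$ in the successor step: here the inductive hypothesis cannot be applied to $Q_k$ itself, since its filtration has length exactly $\omega^{1+\beta_0}$ rather than strictly less, and the extra factor of $\omega$ in the target bound must be recovered from the limit behaviour of the filtration through Lemma \ref{3.11}. The remaining effort is the routine but necessary bookkeeping that the induced sub- and quotient-filtrations again satisfy $(\ast)$-condition and stay continuous at limit ordinals, for which Proposition \ref{3.7}(a),(b) and the compatibility of closures with quotient maps are precisely what is needed.
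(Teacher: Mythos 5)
Your proposal is correct and takes essentially the same route as the paper's proof: transfinite induction on $\beta$, with the base and successor cases handled by iterating Theorem \ref{3.9} over a finite block decomposition (your blocks $\alpha_k=\min\{\omega^{1+\beta_0}\cdot k,\alpha\}$ are exactly the paper's reduction ``without loss of generality $\alpha=\omega^{1+\rho}$''), the $(\ast)$-condition transported to sub- and quotient-filtrations via Proposition \ref{3.7}(a),(b), and the full block of length $\omega^{1+\beta_0}$ upgraded from $Sz(W_\eta)\leq\omega^{\gamma+\beta_0}$ to $\omega^{\gamma+\beta_0+1}$ by Lemma \ref{3.11}. The only difference is presentational: you make explicit the last partial block and the continuity-at-limits bookkeeping for the induced filtrations, which the paper treats as immediate.
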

\begin{proof}
We will be done by transfinite induction on $\beta$. If $\beta=0$, then there is $m\in\mathbb N$ such that $\alpha<m$. By Theorem \ref{3.9}, we can inductively show that $Sz(X)\leq\omega^\gamma$.

It passes easily to limit ordinal.
	 
Now we let $\beta=\rho+1$, where $\beta\geq1$, and assume that it is true for $\rho$. If $\alpha<\omega^{1+\rho}$, then $Sz(X)\leq\omega^{\gamma+\rho}\leq\omega^{\gamma+\beta}$ by induction hypothesis. Therefore we assume $\alpha\geq\omega^{1+\rho}$, since $
\alpha<\omega^{1+\beta}$, there is $n\in\mathbb N$ such that $\alpha<\omega^{1+\rho}\cdot n$. For any ordinals $\xi$ and $\eta$ with $\xi\leq\xi+\eta\leq\alpha$, we know that $\{Y_{\xi+\zeta}/Y_\xi\}_{\zeta\leq\eta}$ is a family of subspaces of $Y_{\xi+\eta}/Y_\xi$ which satisfying $(\ast)$-condition and 
\[
Sz\big((Y_{\xi+\zeta+1}/Y_\xi)/(Y_{\xi+\zeta}/Y_\xi)\big)=Sz(Y_{\xi+\zeta+1}/Y_{\xi+\zeta})\leq\omega^\gamma
\]
for each ordinal $\zeta$, $0\leq\zeta<\eta$. Using Theorem \ref{3.9}, it is easy to see that we only need show that
\[
Sz(Y_{\omega^{1+\rho}})\leq\omega^{\gamma+\beta}.
\]
Therefore, without loss of generality, we can assume that $\alpha=\omega^{1+\rho}$. It follows that
\[
\overline{{\bigcup}_{\xi<\omega^{1+\rho}}Y_\xi}=X.
\]
For any ordinal $\xi<\omega^{1+\rho}$, we know that $\{Y_\zeta\}_{\zeta\leq\xi}$ is a family of subspaces of $Y_\xi$ which has $``nice''$ property. By induction hypothesis we have $Sz(Y_\xi)\leq\omega^{\gamma+\rho}$. By Lemma \ref{3.11},
\[
Sz(X)\leq\omega^{\gamma+\rho+1}=\omega^{\gamma+\beta}.
\]
This finishes our induction.
\end{proof}
 
\begin{corollary}\label{3.13}
Let $X$ be a Banach space, there is a family of closed subspaces $\{Y_\xi\}_{\xi\leq\alpha}$ of $X$ satisfying $(\ast)$-condition, indexed by the ordinals $\xi$, $0\leq\xi\leq\alpha$, such that $Y_{\xi_1}\subset Y_{\xi_2}$ if $\xi_1<\xi_2\leq\alpha$, such that $Y_0=\{0\}$ and $Y_\alpha=X$, and such that $Y_\xi=[\cup_{\zeta<\xi}Y_{\zeta}]^{-}$ for each limit ordinal $\xi$. If $\alpha<\omega^{\beta}$ and $Sz(Y_{\xi+1}/Y_\xi)\leq\omega^{\gamma+1}$ for each $\xi$, $0\leq\xi<\alpha$, then $Sz(X)\leq\omega^{\gamma+\beta}$.
\end{corollary}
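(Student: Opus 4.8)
The plan is to deduce Corollary~\ref{3.13} directly from Theorem~\ref{3.12} by a change of the two ordinal parameters, rather than by redoing the transfinite induction. The corollary differs from the theorem only in that the subquotient bound is raised from $\omega^\gamma$ to $\omega^{\gamma+1}$, while the hypothesis on $\alpha$ is simultaneously tightened from $\alpha<\omega^{1+\beta}$ to $\alpha<\omega^\beta$. The key observation is that both modifications are exactly accounted for by the associativity identity $(\gamma+1)+\beta'=\gamma+(1+\beta')$ of ordinal addition, so no new estimate is needed.

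First I would dispose of the degenerate case $\beta=0$. Here $\alpha<\omega^0=1$ forces $\alpha=0$, so the family collapses to $Y_0=\{0\}=Y_\alpha=X$; then $Sz(X)=1$ by Proposition~\ref{3.1}(a), and $1\le\omega^\gamma=\omega^{\gamma+\beta}$, as required.

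For $\beta\ge1$ I would choose an ordinal $\beta'$ with $1+\beta'=\beta$: one takes $\beta'=\beta$ when $\beta\ge\omega$ (since $1+\beta=\beta$ for infinite $\beta$), and $\beta'=\beta-1$ when $\beta$ is a nonzero finite ordinal. The same family $\{Y_\xi\}_{\xi\le\alpha}$ still satisfies every structural hypothesis of Theorem~\ref{3.12} (the $(\ast)$-condition, the monotonicity $Y_{\xi_1}\subset Y_{\xi_2}$, the endpoints $Y_0=\{0\}$ and $Y_\alpha=X$, and the closure condition at limit ordinals). Moreover $\alpha<\omega^\beta=\omega^{1+\beta'}$, and by hypothesis $Sz(Y_{\xi+1}/Y_\xi)\le\omega^{\gamma+1}$ for each $\xi<\alpha$. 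Applying Theorem~\ref{3.12} with $\gamma$ replaced by $\gamma+1$ and $\beta$ replaced by $\beta'$ then yields
\[
Sz(X)\le\omega^{(\gamma+1)+\beta'}=\omega^{\gamma+(1+\beta')}=\omega^{\gamma+\beta}.
\]

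The only genuine subtlety — and the step I would verify most carefully — is the solvability of $1+\beta'=\beta$, which is precisely where the shift from exponent $\gamma$ to $\gamma+1$ compensates for the drop from $\omega^{1+\beta}$ to $\omega^\beta$ in the constraint on $\alpha$. Since $1+x=\beta$ admits a solution exactly when $\beta\ge1$ (the infinite-ordinal absorption $1+\beta=\beta$ handling the limit and large successor cases, and $\beta'=\beta-1$ the finite ones), the reduction is valid for every $\beta\ge1$, and the separate treatment of $\beta=0$ completes the argument. In particular, no inductive estimate is required beyond what Theorem~\ref{3.12} already supplies.
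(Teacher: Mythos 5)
Your proposal is correct and is essentially the paper's own proof: both arguments reduce the corollary to Theorem~\ref{3.12} by writing $\beta=1+\beta'$ (with $\beta'=n$ for finite $\beta=1+n$ and $\beta'=\beta$ for $\beta\geq\omega$) and invoking the associativity $(\gamma+1)+\beta'=\gamma+(1+\beta')=\gamma+\beta$. Your explicit treatment of the degenerate case $\beta=0$ is a small improvement in care over the paper, which tacitly assumes $\beta\geq1$ in its finite case.
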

\begin{proof}
If $\beta<\omega$, then there is $n\in\mathbb N$ such that $\beta=1+n$. By Theorem \ref{3.12}
\[
Sz(X)\leq\omega^{(\gamma+1)+n}=\omega^{\gamma+(1+n)}=\omega^{\gamma+\beta}.
\]

If $\beta\geq\omega$, then $\beta=1+\beta$. By Theorem \ref{3.12}
\[
Sz(X)\leq\omega^{(\gamma+1)+\beta}=\omega^{\gamma+(1+\beta)}=\omega^{\gamma+\beta}.
\]
\end{proof}

\section{\large Szlenk and $w^\ast$-dentability indices of C*-algebras}
Given an ordinal $\xi$, we let $\Gamma(\xi)$ denote is the minimum ordinal number which is not less than $\xi$ of the form $\omega^\zeta$ for some $\zeta$. Since $\omega^\xi\geq\xi$ for any ordinal $\xi$, this minimum exists. For completeness, we agree that $\Gamma(\infty)=\omega\infty=\infty$.

Next, we introduce the result of R.M. Causey \cite{Causey2017,Causey2017b}.
\begin{theorem}[\text{R.M. Causey}]\label{4.1}
	Let $K$ be a compact, Hausdorff topological space and $1<p<\infty$. Then 
	\[
	Sz(C(K))=\Gamma(i(K))~and~Dz(C(K))=Sz(L_p(C(K)))=\omega Sz(C(K))=\omega\Gamma(i(K)).
	\]
\end{theorem}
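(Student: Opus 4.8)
The plan is to separate the non-scattered and scattered cases and, in the scattered case, to run the Section~3 machinery along the Cantor--Bendixson filtration of $C(K)$. If $K$ is not scattered then $i(K)=\infty$ and, by Theorem \ref{2.2}, $C(K)$ is not Asplund, so every quantity in the statement equals $\infty=\Gamma(\infty)$; thus I may assume $K$ scattered and set $\lambda:=i(K)<\infty$. For $0\le\xi\le\lambda$ I would put $Y_\xi=\{f\in C(K):f|_{K^{(\xi)}}=0\}$, the closed two-sided ideal of functions vanishing on the $\xi$-th derived set. These ideals are nested with $Y_0=\{0\}$ and $Y_\lambda=C(K)$, and $Y_\xi=[\cup_{\zeta<\xi}Y_\zeta]^{-}$ at limit ordinals because $K^{(\xi)}=\cap_{\zeta<\xi}K^{(\zeta)}$; since each $Y_\xi$ is an $M$-ideal of $C(K)$, the family $\{Y_\xi\}$ satisfies the $(\ast)$-condition. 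Using $C(K)/Y_\xi\cong C(K^{(\xi)})$ one identifies $Y_{\xi+1}/Y_\xi$ with $C_0(K^{(\xi)}\setminus K^{(\xi+1)})\cong c_0(\Gamma_\xi)$, where $\Gamma_\xi$ is the discrete set of isolated points of $K^{(\xi)}$; hence $Sz(Y_{\xi+1}/Y_\xi)\le\omega$ for every $\xi$.

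With this filtration the Szlenk upper bound $Sz(C(K))\le\Gamma(i(K))$ follows from Theorems \ref{3.9} and \ref{3.12} together with Corollary \ref{3.13}: feeding in $Sz(Y_{\xi+1}/Y_\xi)\le\omega^{1}$ and the least $\beta_0$ with $\lambda<\omega^{1+\beta_0}$ yields $Sz(C(K))\le\omega^{1+\beta_0}$, and a short piece of ordinal arithmetic identifies $\omega^{1+\beta_0}$ with $\Gamma(\lambda)$. The point that makes this sharp is that for compact scattered $K$ the index $i(K)$ is always a successor ordinal, its last nonempty derived set being finite. For the matching lower bound I would pass to quotients: $C(K^{(\xi)})$ is a quotient of $C(K)$, so Proposition \ref{3.1}(b) gives $Sz(C(K))\ge Sz(C(K^{(\xi)}))$, and it remains to show the $\varepsilon$-Szlenk derivation of $B_{C(K)^\ast}$ survives up to level $\Gamma(i(K))$. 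Writing $B_{C(K)^\ast}=\overline{co}^\ast\{\lambda\delta_t:|\lambda|=1,\ t\in K\}$ and setting $L=\{\lambda\delta_t\}$, one checks $s_\varepsilon^\xi(L)=\{\lambda\delta_t:t\in K^{(\xi)}\}$ for all $0<\varepsilon<2$ (two distinct point masses lie at distance $2$, while motion along a single circle $\{\lambda\delta_t\}$ never certifies diameter $>\varepsilon$), so $Sz(L)=i(K)$. The passage from $Sz(L)=i(K)$ to $Sz(\overline{co}^\ast L)=\Gamma(i(K))$ is exactly Causey's convex-hull identity $Sz(\overline{co}^\ast L)=\Gamma(Sz(L))$ recalled in the introduction.

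For the dentability part I would close the chain $\omega\,Sz(C(K))\le Dz(C(K))\le Sz(L_p(C(K)))\le\omega\,Sz(C(K))$. The middle inequality is Theorem \ref{3.2}. For the right-hand one I transport the filtration to $L_p$: by Proposition \ref{3.7}(c) the spaces $\{L_p(Y_\xi)\}$ satisfy the $(\ast)$-condition in $L_p(C(K))$, with successive quotients $L_p(Y_{\xi+1})/L_p(Y_\xi)\cong L_p(c_0(\Gamma_\xi))$ of Szlenk index at most $\omega^2$; Theorem \ref{3.12} (now with $\gamma=2$) then gives $Sz(L_p(C(K)))\le\omega^{2+\beta_0}=\omega\cdot\omega^{1+\beta_0}=\omega\,Sz(C(K))$. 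The remaining inequality $\omega\,Sz(C(K))\le Dz(C(K))$ is the dentability lower bound: because the slice derivation $d_\varepsilon$ is coarser than $s_\varepsilon$, each Szlenk level is inflated by a factor $\omega$, and one certifies non-emptiness of $d_\varepsilon^{\,\xi}(B_{C(K)^\ast})$ at the prescribed levels by explicit branching families of functionals. Since $Sz(C(K))=\Gamma(i(K))=\omega^\zeta$ is a power of $\omega$ by Proposition \ref{3.1}(c), the two extreme terms coincide and the whole chain collapses to $\omega\,\Gamma(i(K))$.

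I expect the two lower bounds to be the genuine obstacles: Causey's convex-hull estimate $Sz(\overline{co}^\ast L)\ge\Gamma(Sz(L))$ and the dentability estimate $Dz\ge\omega\,Sz$ both require constructing, for each relevant ordinal, weak$^\ast$-compact families of (convex combinations of) point masses indexed by well-founded trees that keep the respective derivations non-empty. The upper bounds, by contrast, are essentially bookkeeping on top of the $(\ast)$-condition filtration and Theorems \ref{3.9}, \ref{3.12} and Corollary \ref{3.13}; there the only delicate point is the ordinal arithmetic converting the crude filtration bound $\omega^{1+\beta_0}$ into the exact value $\Gamma(i(K))$, which relies precisely on $i(K)$ being a successor.
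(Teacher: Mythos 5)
First, a framing remark: the paper does not actually prove Theorem \ref{4.1} --- it is quoted from Causey \cite{Causey2017,Causey2017b} --- so the natural benchmark for your attempt is the paper's parallel machinery for the noncommutative generalization (Lemmas \ref{4.8}--\ref{4.11}, Lemma \ref{4.13}, Theorem \ref{4.16}), of which the commutative case is the specialization. Measured against that, most of your proposal is sound and follows the same route: the ideals $Y_\xi=\{f\in C(K):f|_{K^{(\xi)}}=0\}$ are M-ideals and hence satisfy the $(\ast)$-condition, the limit-continuity $Y_\xi=[\cup_{\zeta<\xi}Y_\zeta]^-$ holds by truncation, the successive quotients are $c_0(\Gamma_\xi)$, resp.\ $L_p(c_0(\Gamma_\xi))$, with Szlenk index at most $\omega$, resp.\ $\omega^2$, and Theorem \ref{3.12}/Corollary \ref{3.13} together with the fact that $i(K)$ is a successor ordinal convert the filtration bound $\omega^{1+\beta_0}$ into $\Gamma(i(K))$, exactly as in the paper's Lemma \ref{4.11}. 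Your Szlenk lower bound is also correct: the computation $s_\varepsilon^\xi(L)=\{\lambda\delta_t:t\in K^{(\xi)}\}$ for $L$ the set of modulus-one multiples of point masses is right (isolated points are removed by slicing with a Urysohn function supported near them, cluster points survive with diameter-$2$ separation), though invoking $Sz(\overline{co}^\ast L)=\Gamma(Sz(L))$ imports one of the main theorems of \cite{Causey2017}; a more self-contained route is the elementary estimate $\{\delta_t:t\in K^{(\xi)}\}\subset s_\varepsilon^\xi(B_{C(K)^\ast})$, which the paper generalizes in Lemma \ref{4.13}(i) and which already yields the lower bound via Proposition \ref{3.1}(c) and the fact that $Sz_\varepsilon(B_{C(K)^\ast})$ is a successor.

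The genuine gap is the dentability lower bound $\omega\,Sz(C(K))\le Dz(C(K))$. Your justification --- ``because the slice derivation $d_\varepsilon$ is coarser than $s_\varepsilon$, each Szlenk level is inflated by a factor $\omega$'' --- is not a valid principle: taken at face value it would give $Dz(X)\ge\omega\,Sz(X)$ for every Banach space $X$, which fails for $X=\ell_2$ (indeed for any infinite-dimensional superreflexive space), where $Sz(X)=Dz(X)=\omega$ by Lancien's characterization recalled in the introduction, while $\omega\,Sz(X)=\omega^2$. The inequality is specific to $C(K)$ (and to C*-algebras in the paper) and is precisely where the hard work sits: one must show $\{\delta_t:t\in K^{(\xi)}\}\subset d_\varepsilon^{\omega\xi}(B_{C(K)^\ast})$ by transfinite induction, the successor step being the dyadic averaging construction --- the sets $A_n$ of averages $2^{-n}\sum_{i=1}^{2^n}\delta_{t_i}$ over $2^n$ \emph{distinct} points of $K^{(\zeta)}$ satisfy $A_n\subset d_\varepsilon^{\omega\zeta+n}(B_{C(K)^\ast})$, because any $w^\ast$-slice through such an average must by convexity contain one of its two half-averages, which lie at norm distance $1>\varepsilon$ from it; then for $t\in K^{(\zeta+1)}$ one has $\delta_t\in\overline{A_n}^{w^\ast}$ for all $n$, whence membership in $d_\varepsilon^{\omega\zeta+\omega}(B_{C(K)^\ast})$. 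This is \cite[Lem.\ 4.1]{Causey2017b}, generalized in the paper's Lemma \ref{4.13}(ii) via the sets $A_n$ and $B_n$ of averaged pure states. You correctly identify this as the obstacle and even describe the shape of the needed families, but you never construct them, so the first link of your chain $\omega Sz\le Dz\le Sz(L_p)\le\omega Sz$ is unproved; the other two links (Theorem \ref{3.2}, and the $L_p$-filtration via Proposition \ref{3.7}(c) with the ordinal identity $\omega^{2+\beta_0}=\omega\cdot\omega^{1+\beta_0}$) are fine.
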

In fact, M. Causey's result has already provided the computation of Szlenk and $w^\ast$-dentability indices on commutative C*-algebras, and here we simply generalize it to the $C_0(\Omega)$ with $\Omega$ is an infinite, locally compact, Hausdorff topological space.

Given an ordinal $\xi$, we let $\Gamma'(\xi)$ denote is the minimum ordinal number which is greater than $\xi$ of the form $\omega^\zeta$ for some $\zeta$, it's easy to see that $\zeta$ is a successor ordinal when $\xi>0$. Since $\omega^{\xi+1}>\xi$ for any ordinal $\xi$, this minimum exists. For completeness, we agree that $\Gamma'(\infty)=\infty$.
\begin{corollary}
Let $\Omega$ be an infinite, locally compact, Hausdorff topological space and $1<p<\infty$. Then
\begin{equation}\label{4.1s}
Sz(C_0(\Omega))=\Gamma'(i(\Omega))~and~Dz(C_0(\Omega))=Sz(L_p(C_0(\Omega)))=\omega Sz(C_0(\Omega))=\omega\Gamma'(i(\Omega)).
\end{equation}	
\end{corollary}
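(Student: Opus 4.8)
The goal is to deduce the corollary about $C_0(\Omega)$ from Causey's Theorem \ref{4.1} about $C(K)$ for compact $K$. The natural bridge is one-point compactification: if $\Omega$ is an infinite, locally compact, Hausdorff space, let $\Omega^+ = \Omega \cup \{\infty\}$ be its one-point compactification, which is compact Hausdorff. Then $C_0(\Omega)$ is exactly the kernel of the evaluation functional $f \mapsto f(\infty)$ on $C(\Omega^+)$, i.e.\ $C_0(\Omega)$ is a codimension-one (hence complemented) subspace of $C(\Omega^+)$, and conversely $C(\Omega^+) \cong C_0(\Omega) \oplus \mathbb{C}$.

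The plan is to first record the relationship between the two Szlenk indices via this splitting. Since $C_0(\Omega)$ is a closed subspace of $C(\Omega^+)$ and $C(\Omega^+)/C_0(\Omega) \cong \mathbb{C}$ is finite-dimensional, Proposition \ref{3.1}(b) gives $Sz(C_0(\Omega)) \le Sz(C(\Omega^+))$, and the reverse inequality follows because $C(\Omega^+) \cong C_0(\Omega) \oplus \mathbb{C}$ means $C(\Omega^+)$ is isomorphic to a space whose Szlenk index equals $\max\{Sz(C_0(\Omega)), Sz(\mathbb{C})\} = \max\{Sz(C_0(\Omega)), 1\} = Sz(C_0(\Omega))$ (using that $\Omega$ infinite forces $C_0(\Omega)$ infinite-dimensional, so its index is at least $\omega$ by Proposition \ref{3.1}(a)). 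Thus $Sz(C_0(\Omega)) = Sz(C(\Omega^+))$, and the identical argument with $L_p$ and with $Dz$ handles the dentability side, since $L_p(C(\Omega^+)) \cong L_p(C_0(\Omega)) \oplus L_p(\mathbb{C})$ decomposes the same way.

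Next I would relate the Cantor–Bendixson indices. The key topological fact is that the derived sets of $\Omega^+$ and $\Omega$ agree away from the point at infinity: $(\Omega^+)^{(\alpha)} \cap \Omega = \Omega^{(\alpha)}$ for every ordinal $\alpha$, because a point of $\Omega$ is isolated in a derived set of $\Omega^+$ iff it is isolated in the corresponding derived set of $\Omega$ (local compactness ensures the neighborhood structure at points of $\Omega$ is unaffected by adjoining $\infty$). Consequently $\Omega^+$ is scattered iff $\Omega$ is, and when $\Omega$ is scattered the derived sets of $\Omega^+$ run through those of $\Omega$ and then terminate: one finds $i(\Omega^+) = i(\Omega) + 1$ when $i(\Omega)$ is a successor or finite (the last surviving point being $\infty$), while more generally $i(\Omega^+)$ is either $i(\Omega)$ or $i(\Omega)+1$ depending on whether $\infty$ survives to the final nonempty derived set. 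I would prove the clean statement that $\Gamma(i(\Omega^+)) = \Gamma'(i(\Omega))$, which is exactly the bookkeeping needed: applying Theorem \ref{4.1} to $K = \Omega^+$ gives $Sz(C(\Omega^+)) = \Gamma(i(\Omega^+))$, and combining with the first step yields $Sz(C_0(\Omega)) = \Gamma(i(\Omega^+)) = \Gamma'(i(\Omega))$.

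The main obstacle is the final ordinal identity $\Gamma(i(\Omega^+)) = \Gamma'(i(\Omega))$, where the interplay between whether $\infty$ is an isolated point of the last derived set and the distinction between $\Gamma$ (least $\omega^\zeta \ge \xi$) and $\Gamma'$ (least $\omega^\zeta > \xi$) must be handled carefully. The point is that $\Gamma$ and $\Gamma'$ differ precisely when $\xi$ is itself of the form $\omega^\zeta$, and the $+1$ shift from adjoining $\infty$ compensates for exactly this discrepancy: when $i(\Omega) = \omega^\zeta$ one has $\Gamma'(i(\Omega)) = \omega^{\zeta+1}$ while $i(\Omega^+) = i(\Omega)+1 = \omega^\zeta+1$ so that $\Gamma(i(\Omega^+)) = \omega^{\zeta+1}$ as well. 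I would verify the remaining cases by splitting on whether $i(\Omega)$ is a successor, a limit, or finite, checking at each case that $\infty$ does or does not survive into the terminal derived set. The $Dz$ and $L_p$ statements then follow at once by multiplying through by $\omega$, using the $Dz$-analogue of the first-step decomposition together with Theorem \ref{4.1}; the convention $\Gamma'(\infty)=\infty$ covers the non-scattered case where both sides are $\infty$ by Theorem \ref{2.2}.
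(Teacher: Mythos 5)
Your proposal is correct and takes essentially the same route as the paper's proof: one-point compactification $\tilde\Omega$, the derived-set identity $\Omega^{(\alpha)}=\tilde\Omega^{(\alpha)}\setminus\{\infty\}$, the splitting $C\big(\tilde\Omega\big)\approx C_0(\Omega)\oplus\mathbb C$ combined with Theorem \ref{4.1} (and its $L_p$/$Dz$ counterparts), and the ordinal bookkeeping $\Gamma\big(i\big(\tilde\Omega\big)\big)=\Gamma'(i(\Omega))$ --- which the paper settles in one stroke by noting that $i\big(\tilde\Omega\big)$ is necessarily a successor ordinal (by compactness of $\tilde\Omega$), so that $\Gamma\big(i\big(\tilde\Omega\big)\big)=\Gamma'\big(i\big(\tilde\Omega\big)\big)=\Gamma'(i(\Omega))$, where you instead run a case analysis. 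Your one parenthetical slip --- $i(\Omega^+)=i(\Omega)+1$ need not hold when $i(\Omega)$ is a successor (e.g.\ for $\Omega$ the disjoint union of a convergent sequence and an infinite discrete set one has $i(\Omega)=i(\Omega^+)=2$) --- is harmless, since you immediately hedge it and the identity $\Gamma(i(\Omega^+))=\Gamma'(i(\Omega))$ that you actually use holds in every case (in the critical case $i(\Omega)=\omega^\zeta$, compactness forces $\infty$ to survive the terminal derivation, giving $i(\Omega^+)=\omega^\zeta+1$).
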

\begin{proof}
If $\Omega$ is compact. Then $C_0(\Omega)=C(\Omega)$ and $i(\Omega)$ is never limit ordinal, so $\Gamma(i(\Omega))=\Gamma'(i(\Omega))$. By Theorem \ref{4.1}, it follows that
\[
Sz(C_0(\Omega))=\Gamma'(i(\Omega))~and~Dz(C_0(\Omega))=Sz(L_p(C_0(\Omega)))=\omega Sz(C_0(\Omega))=\omega\Gamma'(i(\Omega)).            
\]

If $\Omega$ is not compact, we put $\tilde{\Omega}=\Omega\cup\{\infty\}$ is one-point compactification of $\Omega$. It is easy to see that
\[         
\Omega^{(\alpha)}=\tilde\Omega^{(\alpha)}\backslash\{\infty\}
\]
for any ordinal number $\alpha$. So $i(\Omega)\leq i\big(\tilde\Omega\big)\leq i(\Omega)+1$. By $i\big(\tilde\Omega\big)$ is a successor ordinal, we can see that $\Gamma\big(i\big(\tilde\Omega\big)\big)=\Gamma'\big(i\big(\tilde\Omega\big)\big)=\Gamma'(i(\Omega))$. Note that $C\big(\tilde\Omega\big)\approx C_0(\Omega)\oplus\mathbb C$ or $C\big(\tilde\Omega\big)\approx C_0(\Omega)\oplus\mathbb R$. By Proposition \ref{3.1} (e) and Theorem \ref{4.1}, we have
\[
Sz(C_0(\Omega))=Sz\big(C\big(\tilde{\Omega}\big)\big)=\Gamma\big(i\big(\tilde\Omega\big)\big)=\Gamma'(i(\Omega))
\]
and
\[
Dz(C_0(\Omega))=Dz\big(C\big(\tilde{\Omega}\big)\big)=Sz\big(L_p\big(C\big(\tilde{\Omega}\big)\big)\big)=\omega\Gamma\big(i\big(\tilde\Omega\big)\big)=\omega\Gamma'(i(\Omega)),
\]
\[
Sz\big(L_p\big(C(\tilde\Omega\big)\big)\big)=\max\{Sz(L_p(C_0(\Omega))),Sz(L_p)\}=Sz(L_p(C_0(\Omega))).
\]
\end{proof}

Next, we will deal with the Szlenk and $w^\ast$-dentability indices of C*-algebra. First, a natural idea is to consider whether the noncommutative version of equation (\ref{4.1s}) holds. Because $\Omega$ is homeomorphic to $\{\delta_t:t\in\Omega\}=PS(C_0(\Omega))$, where $PS(C_0(\Omega))$ equipped with the weak* topology. Comparing with equation (\ref{4.1s}), one guess is that $Sz(\mathcal A)=\Gamma'(i(PS(\mathcal A)))$ for any infinite dimension C*-algebra $\mathcal A$. However, the following theorem shows that this guess is incorrect.

\begin{theorem}
	Let $\mathcal A$ be a C*-algebra. Then $i(PS(\mathcal A))<\infty$ (i.e., $PS(\mathcal A)$ is scattered), where $PS(\mathcal A)$ equipped with the weak* topology, if and only if $\mathcal A$ is a commutative scattered C*-algebra.
\end{theorem}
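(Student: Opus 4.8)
The plan is to prove the two implications separately, using in both directions the classical identification of the pure state space of a commutative C*-algebra with its spectrum, and reducing the non-commutative case to the production of a non-scattered subspace of $PS(\mathcal A)$.

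For the implication that a commutative scattered $\mathcal A$ has scattered pure state space, I would write $\mathcal A\cong C_0(\Omega)$ with $\Omega$ its (locally compact, Hausdorff) spectrum. The pure states of $C_0(\Omega)$ are precisely the point evaluations $\delta_t$, $t\in\Omega$, and $t\mapsto\delta_t$ is a weak*-homeomorphism of $\Omega$ onto $PS(\mathcal A)$; continuity and injectivity are immediate, and the inverse is continuous because a function in $C_0(\Omega)$ equal to $1$ at $t$ and vanishing off a small neighborhood separates $\delta_t$ from faraway point evaluations. Since $\mathcal A$ scattered forces $\Omega$ scattered (Proposition \ref{2.5} together with the Cantor--Bendixson equivalences of Theorem \ref{2.2}), $PS(\mathcal A)\cong\Omega$ is scattered, i.e. $i(PS(\mathcal A))<\infty$.

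For the converse, assume $PS(\mathcal A)$ is scattered. First I would show $\mathcal A$ is commutative. If it were not, then, since irreducible representations separate the points of $\mathcal A$, some irreducible representation $(\pi,H)$ would satisfy $\dim H\geq2$. Fixing a two-dimensional subspace $V\subset H$, I would consider the map sending $[\xi]$ in the projective space $P(V)\cong\mathbb{CP}^1$ to the vector state $\omega_\xi\circ\pi$, where $\omega_\xi(a)=\langle\pi(a)\xi,\xi\rangle$. Each such functional is a pure state of $\mathcal A$, because vector states of irreducible representations are pure, so the map lands in $PS(\mathcal A)$. It is well defined and injective on $\mathbb{CP}^1$: as $\pi$ is irreducible, $\pi(\mathcal A)$ is weak-operator dense in $B(H)$, so $\omega_\xi=\omega_\eta$ on $\mathcal A$ forces $\langle T\xi,\xi\rangle=\langle T\eta,\eta\rangle$ for all $T\in B(H)$, and testing against the projection onto $\mathbb C\xi$ gives $[\xi]=[\eta]$. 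The map is weak*-continuous (each coordinate $\xi\mapsto\langle\pi(a)\xi,\xi\rangle$ is continuous), and being a continuous injection from the compact space $\mathbb{CP}^1$ into the Hausdorff space $PS(\mathcal A)$, it is a homeomorphism onto its image. Thus $PS(\mathcal A)$ contains a copy of $\mathbb{CP}^1\cong S^2$, which, being connected with more than one point, has no isolated points and hence is not scattered; since scatteredness passes to subspaces, this contradicts the hypothesis. Therefore $\mathcal A$ is commutative, and the identification $PS(\mathcal A)\cong\Omega$ of the previous paragraph shows $\Omega$ is scattered, whence $\mathcal A=C_0(\Omega)$ is scattered.

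The main obstacle is the non-commutative step: one must exhibit, inside $PS(\mathcal A)$ equipped with the weak* topology, a genuinely non-scattered subset. The delicate points are verifying that the projective-space parametrization of vector states is a topological embedding, namely injectivity via the weak-operator density of $\pi(\mathcal A)$ and the compact-to-Hausdorff upgrade of a continuous injection to a homeomorphism, and confirming that its image is perfect. Everything else, both the commutative identification and the final transfer of scatteredness, is routine given the equivalences already recalled.
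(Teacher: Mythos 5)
Your proof is correct and takes essentially the same route as the paper: the key mechanism in both is that vector states of an irreducible representation of dimension at least $2$ are pure, are pairwise distinct on distinct rays (via weak-operator density of $\pi(\mathcal A)$ in $B(H)$), and form a set with no relatively isolated points, which kills scatteredness of $PS(\mathcal A)$. The only difference is packaging: the paper takes the full set $\Omega$ of vector states, asserts $\Omega'=\Omega$, and runs a transfinite induction to get $PS(\mathcal A)^{(\alpha)}\supset\Omega$ for all $\alpha$, whereas your restriction to a two-dimensional subspace realizes the same obstruction as a compact perfect copy of $\mathbb{CP}^1$ embedded in $PS(\mathcal A)$ (the compact-to-Hausdorff argument making the embedding automatic), after which heredity of scatteredness finishes the argument --- a marginally cleaner verification of the same idea.
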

\begin{proof}
	Sufficiency. It is obviously.
	
	Necessity. We let $\{H,\pi\}$ is an any nonzero irreducible representation, for all $\xi\in H$ and $\Vert\xi\Vert=1$, we let $\omega_\xi=(\pi(\cdot)\xi|\xi)$. By $\xi$ is a cyclic vector, then $\{H,\pi\}$ and $\{H_{\omega_\xi},\pi_{\omega_\xi}\}$ are unitarily equivalent, so $\omega_\xi\in PS(\mathcal A)$. We let $\Omega=\{\omega_\xi:\xi\in H~\rm{and}~\Vert\xi\Vert=1\}$. If $\dim(H)>1$, it is easy to see $\Omega'=\Omega$, where $PS(\mathcal A)$ equipped with the weak* topology. So for any ordinal $\alpha$, we have 
	\[
	PS(\mathcal A)^{(\alpha)}\supset\Omega.
	\]
It follows that $i(PS(\mathcal A))=\infty$ (i.e., $PS(\mathcal A)$ is not scattered). This is a contradiction. So $\dim(H)=1$, this yields that $\mathcal A$ is commutative, and $PS(\mathcal A)$ is homeomorphic to $\widehat{\mathcal A}$. Therefore $\mathcal A$ is scattered.
\end{proof}
\begin{remark}
There exists C*-algebra $\mathcal A$ such that $PS(\mathcal A)$ may not be $w^\ast$-closed, see for example a paper of J. Glimm \cite{Glimm}. So of course it may be not $w^\ast$-compact.
\end{remark}
Therefore, we need to look for alternative replacements to analogize $i(\Omega)$. Inspired by reference S. Ghasemi and P. Koszmide \cite{GK}. Suppose that $\mathcal A$ is a C*-algebra. and for every ordinal number $\alpha$, we define the ideal $\mathcal A^{(\alpha)}$ by transfinite induction: $\mathcal A^{(0)}=\{0\}$; if $\alpha=\beta+1$, by Theorem \ref{2.7}, we denote $\mathcal A^{(\alpha)}$ is the unique ideal $\mathcal A^{(\beta+1)}\supset\mathcal A^{(\beta)}$ such that
\[
\mathcal A^{(\beta+1)}/\mathcal A^{(\beta)}=\mathcal I^{At}(\mathcal A/\mathcal A^{(\beta)}).
\]
If $\alpha$ is a limit ordinal, we denote
\[
\mathcal A^{(\alpha)}=\overline{\bigcup_{\beta<\alpha}\mathcal A^{(\beta)}}~~~~~~~(\rm{norm~clusre}).
\]
We let
\[
i(\mathcal A)=\min\{\alpha:\mathcal A^{(\alpha)}=\mathcal A\}
\]
if this class of ordinals is non-empty, and we write $i(\mathcal A)=\infty$ otherwise. We agree to the convention that $\xi<\infty$ for any ordinal $\xi$.

The following theorem follows from S. Ghasemi and P. Koszmide \cite{GK}.
\begin{theorem}[\text{S. Ghasemi and P. Koszmide}]\label{4.5}
Let $\mathcal A$ be a C*-algebra. Then $i(\mathcal A)<\infty$ if and only if $\mathcal A$ is scattered.
\end{theorem}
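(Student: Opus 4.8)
The plan is to prove the two implications of the equivalence separately, in both cases exploiting the transfinite composition series $\{\mathcal A^{(\beta)}\}$ together with Theorem \ref{2.7}, which tells us that each successive quotient $\mathcal A^{(\beta+1)}/\mathcal A^{(\beta)}=\mathcal I^{At}(\mathcal A/\mathcal A^{(\beta)})$ is an ideal generated by the minimal projections of $\mathcal A/\mathcal A^{(\beta)}$.

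First I would treat the direction ``$\mathcal A$ scattered $\Rightarrow i(\mathcal A)<\infty$''. The chain $\{\mathcal A^{(\beta)}\}$ is non-decreasing for inclusion, and since a C*-algebra has only a set of closed ideals it cannot increase strictly through all ordinals; moreover the recursion shows that once $\mathcal A^{(\beta+1)}=\mathcal A^{(\beta)}$ the chain is constant thereafter. Hence there is a $\beta$ with $\mathcal I^{At}(\mathcal A/\mathcal A^{(\beta)})=\mathcal A^{(\beta+1)}/\mathcal A^{(\beta)}=\{0\}$. Were $\mathcal A^{(\beta)}\neq\mathcal A$, the quotient $\mathcal A/\mathcal A^{(\beta)}$ would be a nonzero quotient of $\mathcal A$ with no nonzero minimal projection, contradicting Definition \ref{scatteredc}; so $\mathcal A^{(\beta)}=\mathcal A$ and $i(\mathcal A)\le\beta<\infty$.

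For the converse, assume $\alpha:=i(\mathcal A)<\infty$, so $\mathcal A^{(\alpha)}=\mathcal A$, and let $\mathcal J$ be an arbitrary proper closed ideal; I must produce a nonzero minimal projection in $\mathcal A/\mathcal J$. Let $\beta$ be the least ordinal with $\mathcal A^{(\beta)}\not\subseteq\mathcal J$, which exists because $\mathcal A^{(\alpha)}=\mathcal A\not\subseteq\mathcal J$. A key step is to verify that $\beta$ is not a limit ordinal: at a limit, $\mathcal A^{(\beta)}$ is the norm closure of $\bigcup_{\gamma<\beta}\mathcal A^{(\gamma)}\subseteq\mathcal J$, and $\mathcal J$ being closed forces $\mathcal A^{(\beta)}\subseteq\mathcal J$. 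Thus $\beta=\gamma+1$ with $\mathcal A^{(\gamma)}\subseteq\mathcal J$ but $\mathcal A^{(\gamma+1)}\not\subseteq\mathcal J$. Passing to $\mathcal C:=\mathcal A/\mathcal A^{(\gamma)}$ and $\mathcal J_0:=\mathcal J/\mathcal A^{(\gamma)}$, we have $\mathcal A/\mathcal J\cong\mathcal C/\mathcal J_0$ and $\mathcal I^{At}(\mathcal C)=\mathcal A^{(\gamma+1)}/\mathcal A^{(\gamma)}\not\subseteq\mathcal J_0$. Since $\mathcal I^{At}(\mathcal C)$ is generated by the minimal projections of $\mathcal C$ and $\mathcal J_0$ is a closed $\ast$-subalgebra, at least one minimal projection $p$ of $\mathcal C$ lies outside $\mathcal J_0$; its image $q(p)$ under the quotient map $q\colon\mathcal C\to\mathcal C/\mathcal J_0$ is then a nonzero projection with $q(p)(\mathcal C/\mathcal J_0)q(p)=q(p\mathcal C p)=\mathbb C\,q(p)$, i.e.\ a minimal projection of $\mathcal A/\mathcal J$.

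I expect the main obstacle to be in the converse direction: isolating the critical index $\gamma$ and verifying it is a successor (which is exactly where closedness of $\mathcal J$ under the limit-ordinal clause of the construction is used), and then showing that a minimal projection genuinely survives into the quotient $\mathcal A/\mathcal J$. The forward direction is comparatively soft, resting only on the stabilization of the transfinite chain and the defining property of scatteredness. Both arguments lean essentially on Theorem \ref{2.7}, which guarantees that the layers $\mathcal A^{(\beta+1)}/\mathcal A^{(\beta)}$ are generated by minimal projections; without that structural input neither the lifting step nor the contradiction in the forward direction would be available.
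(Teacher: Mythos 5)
Your proof is correct: the stabilization of the non-decreasing transfinite chain of ideals plus Definition \ref{scatteredc} handles the forward direction, and in the converse the least $\beta$ with $\mathcal A^{(\beta)}\not\subseteq\mathcal J$ is indeed a successor $\gamma+1$ (it is nonzero since $\mathcal A^{(0)}=\{0\}\subseteq\mathcal J$, and not a limit since $\mathcal J$ is closed), after which the computation $q(p)(\mathcal C/\mathcal J_0)q(p)=q(p\mathcal Cp)=\mathbb C\,q(p)$ correctly produces a nonzero minimal projection in $\mathcal A/\mathcal J$. The paper gives no proof of Theorem \ref{4.5}, citing \cite{GK} instead, and your argument is essentially the standard one from that reference, so the two approaches coincide.
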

In fact, $i(\mathcal A)=i\big(\widehat{\mathcal A}\big)$ when $\mathcal A$ is a scattered C*-algebra (i.e., $i(\mathcal A)<\infty$), that can see by the following theorem:
\begin{theorem}\label{4.6}
	Let $\mathcal A$ be a separable or type I C*-algebra. Then for any ordinal $\alpha$,
	\[
	\mathcal A^{(\alpha)}=\ker'\left(\widehat{\mathcal A}^{(\alpha)}\right).
	\]
	In particular, let $\Omega$ be a locally compact, Hausdorff topological space. Then for any ordinal $\alpha$,
	\[
	C_0(\Omega)^{(\alpha)}=\{f\in C_0(\Omega):f|_{\Omega^{(\alpha)}}=0\}.
	\]
\end{theorem}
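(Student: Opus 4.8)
The plan is to prove $\mathcal{A}^{(\alpha)}=\ker'(\widehat{\mathcal{A}}^{(\alpha)})$ by transfinite induction on $\alpha$, with the engine being the classical inclusion-reversing correspondence between closed two-sided ideals of a C*-algebra and closed subsets of its spectrum (for type I algebras $\widehat{\mathcal A}$ is identified with the primitive ideal space $\mathrm{Prim}(\mathcal A)$, and for separable algebras one passes through $\mathrm{Prim}(\mathcal A)$, using that $\ker'$ factors through the canonical map $\widehat{\mathcal A}\to\mathrm{Prim}(\mathcal A)$). The two facts I would invoke repeatedly are the Galois identities ${\rm hull}'(\ker'(F))=F$ for every closed saturated $F\subseteq\widehat{\mathcal A}$ and $\ker'({\rm hull}'(\mathcal J))=\mathcal J$ for every closed ideal $\mathcal J$, together with the standard fact that the irreducible representations of $\mathcal A/\mathcal J$ are exactly those of $\mathcal A$ vanishing on $\mathcal J$, so that $\widehat{\mathcal A/\mathcal J}$ is canonically homeomorphic to the closed set ${\rm hull}'(\mathcal J)$. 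To make the saturation hypotheses propagate, I would strengthen the induction hypothesis to carry both $\mathcal A^{(\beta)}=\ker'(\widehat{\mathcal A}^{(\beta)})$ and ${\rm hull}'(\mathcal A^{(\beta)})=\widehat{\mathcal A}^{(\beta)}$, the latter guaranteeing that $\widehat{\mathcal A}^{(\beta)}$ is the hull of an ideal, hence saturated and closed.

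The heart of the argument is the following lemma, which I would establish first: for a type I or separable C*-algebra $\mathcal B$,
\[
\mathcal{I}^{At}(\mathcal B)=\ker'\big((\widehat{\mathcal B})'\big),
\]
that is, the ideal generated by the minimal projections has open support equal to the set of isolated points of $\widehat{\mathcal B}$. By Theorem \ref{2.7} and Theorem \ref{2.9}, $\mathcal{I}^{At}(\mathcal B)$ is an ideal isomorphic to a $c_0$-sum of algebras $\mathcal K(H_i)$, so its spectrum is discrete and, as an ideal, is an open subset $U$ of $\widehat{\mathcal B}$; since $U$ is open and discrete, each of its points is isolated in $\widehat{\mathcal B}$, giving $U\subseteq\{$isolated points$\}$. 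Conversely, if $[\pi]$ is isolated then $\{[\pi]\}$ is open and the corresponding one-point-spectrum ideal is isomorphic to $\mathcal K(H_\pi)$, whose rank-one projections $p$ satisfy $p\mathcal Bp\subseteq p\mathcal K(H_\pi)p=\mathbb Cp$ by the ideal property, hence are minimal in $\mathcal B$; thus $[\pi]\in U$. This yields $U=\{$isolated points$\}$, so $\widehat{\mathcal B}\setminus U=(\widehat{\mathcal B})'$, and moreover $(\widehat{\mathcal B})'={\rm hull}'(\mathcal{I}^{At}(\mathcal B))$ is itself a hull, which is exactly the saturation fact needed downstream.

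With the lemma the induction runs. The base case $\alpha=0$ reads $\{0\}=\ker'(\widehat{\mathcal A})$, valid because the irreducible representations separate points. For the successor step, the induction hypothesis gives ${\rm hull}'(\mathcal A^{(\beta)})=\widehat{\mathcal A}^{(\beta)}$, so the homeomorphism $\widehat{\mathcal A/\mathcal A^{(\beta)}}\cong\widehat{\mathcal A}^{(\beta)}$ (closedness identifies intrinsic and ambient derived sets) carries the derived set of the quotient spectrum onto $(\widehat{\mathcal A}^{(\beta)})'=\widehat{\mathcal A}^{(\beta+1)}$; applying the lemma to $\mathcal B=\mathcal A/\mathcal A^{(\beta)}$ and pulling $\ker'$ back through the quotient map $q$ (so that $\ker'(S)$ in the quotient becomes $\ker'$ of the inflated representations of $\mathcal A$) gives $\mathcal A^{(\beta+1)}=q^{-1}(\mathcal{I}^{At}(\mathcal A/\mathcal A^{(\beta)}))=\ker'(\widehat{\mathcal A}^{(\beta+1)})$, and the lemma's last remark keeps ${\rm hull}'(\mathcal A^{(\beta+1)})=\widehat{\mathcal A}^{(\beta+1)}$. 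For a limit ordinal $\alpha$, one has $\mathcal A^{(\alpha)}=\overline{\bigcup_{\beta<\alpha}\ker'(\widehat{\mathcal A}^{(\beta)})}$; the inclusion $\subseteq\ker'\big(\bigcap_{\beta<\alpha}\widehat{\mathcal A}^{(\beta)}\big)=\ker'(\widehat{\mathcal A}^{(\alpha)})$ follows from order-reversal of $\ker'$, and the reverse inclusion follows by taking hulls, since ${\rm hull}'\big(\mathcal A^{(\alpha)}\big)\subseteq\bigcap_{\beta<\alpha}\widehat{\mathcal A}^{(\beta)}=\widehat{\mathcal A}^{(\alpha)}$ and then applying $\ker'$ and the Galois identity.

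The $C_0(\Omega)$ statement is the commutative specialization: $C_0(\Omega)$ is type I with $\widehat{C_0(\Omega)}$ homeomorphic to $\Omega$ via point characters, whence $\widehat{C_0(\Omega)}^{(\alpha)}=\Omega^{(\alpha)}$ and $\ker'(\Omega^{(\alpha)})=\{f\in C_0(\Omega):f|_{\Omega^{(\alpha)}}=0\}$, which is the displayed formula. I expect the main obstacle to be the key lemma, specifically the clean identification of the ideal generated by minimal projections with the isolated points of the spectrum, which rests on recognizing the one-point-spectrum ideal at an isolated point as the compacts and checking the two-way passage between minimal projections and isolated points; a secondary technical point, relevant for separable non-type I algebras, is justifying the Galois identities and the attendant saturation of $\widehat{\mathcal A}^{(\beta)}$ through Dixmier's theory of $\mathrm{Prim}(\mathcal A)$, which the strengthened induction hypothesis is designed to handle.
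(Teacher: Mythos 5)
Your proposal is correct and takes essentially the same route as the paper: the same transfinite induction driven by the hull--kernel Galois correspondence and the homeomorphism $\widehat{\mathcal A/\mathcal J}\simeq{\rm hull}'(\mathcal J)$, with the same key identity $\mathcal I^{At}(\mathcal B)=\ker'\big(\big(\widehat{\mathcal B}\big)'\big)$ obtained from Theorems \ref{2.7} and \ref{2.9}, and the same limit-ordinal computation via hulls. Your only departures are organizational rather than substantive: you verify the isolated-point inclusion pointwise through one-point-spectrum ideals (which Theorem \ref{2.9}, applied to the ideal, identifies with $\mathcal K(H_\pi)$) instead of applying Theorem \ref{2.9} globally to $\ker'\big(\widehat{\mathcal A}^{\,\prime}\big)$, and you explicitly carry ${\rm hull}'\big(\mathcal A^{(\beta)}\big)=\widehat{\mathcal A}^{(\beta)}$ in the induction hypothesis, which cleanly justifies the closedness and saturation of the derived sets that the paper uses implicitly when it writes ${\rm hull}'\big(\ker'\big(\widehat{\mathcal A}^{(\beta)}\big)\big)=\widehat{\mathcal A}^{(\beta)}$.
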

\begin{proof}
	We denote, for any ordinal $\alpha$, by $(H_\alpha)$ the  implication to be proved. This will be done by transfinite induction on $\alpha$. $(H_0)$ is trivially true. For arbitrary an ordinal $\alpha$, we assume for all ordinal $\beta<\alpha$, $(H_\beta)$ is true. If $\alpha$ is a limit ordinal, then
	\[
	\rm{hull}'\left(\mathcal A^{(\alpha)}\right)=\rm{hull}'\left(\overline{\cup_{\beta<\alpha}\mathcal A^{(\beta)}}\right)=\bigcap_{\beta<\alpha}\rm{hull}'\left(\mathcal A^{(\beta)}\right)=\bigcap_{\beta<\alpha}\rm{hull}'\left(\ker'\left(\widehat{\mathcal A}^{(\beta)}\right)\right)=\bigcap_{\beta<\alpha}\widehat{\mathcal A}^{(\beta)}=\widehat{\mathcal A}^{(\alpha)},
	\]
	so
	\[
	\mathcal A^{(\alpha)}=\ker'\left(\widehat{\mathcal A}^{(\alpha)}\right).
	\]
	If $\alpha=\beta+1$. First, we claim that $\mathcal I^{At}(\mathcal A)=\mathcal A^{(1)}=\ker'\left(\widehat{\mathcal A}^{~'}\right)$. We put $J_1=\ker'\left(\widehat{\mathcal A}^{~'}\right)$, since $\widehat{J_1}\simeq\widehat{\mathcal A}\backslash{\rm hull}'(J_1)=\widehat{\mathcal A}\backslash\widehat{\mathcal A}^{~'}$ is discrete topology space, by Theorem \ref{2.7} and Theorem \ref{2.9}, $J_1\subset\mathcal I^{At}(\mathcal A)=\mathcal A^{(1)}$. Conversely, $\mathcal A^{(1)}=\mathcal I^{At}(\mathcal A)$ is isomorphic to the $c_0$-sum of $\{\mathcal K(H_i)\}_{i\in I}$, where $\{H_i\}_{i\in I}$ is a family of Hilbert spaces. So $\widehat{\mathcal A^{(1)}}\simeq\widehat{\mathcal A}\backslash\rm{hull}'(\mathcal A^{(1)})$ is discrete. It follows that $\widehat{\mathcal A}^{~'}\subset\rm{hull}'(\mathcal A^{(1)})$,
	\[
	\ker'(\widehat{\mathcal A}^{~'})\supset\ker'(\rm{hull}'(\mathcal A^{(1)}))=\mathcal A^{(1)}.
	\] 
	Therefore, $\mathcal I^{At}(\mathcal A)=\mathcal A^{(1)}=\ker'\left(\widehat{\mathcal A}^{'}\right)$. Finally, 
	\[
	\mathcal A^{(\beta+1)}/\mathcal A^{(\beta)}=\mathcal I^{At}(\mathcal A/\mathcal A^{(\beta)})=\ker'\left(\widehat{\mathcal A/\mathcal A^{(\beta)}}'\right)=\ker'\left(\left(\rm{hull}'\left(\mathcal A^{(\beta)}\right)\right)'\right)\Big/\mathcal A^{(\beta)}
	\]
	\[
	=\ker'\left(\left(\rm{hull}'\left(\ker'\left(\widehat{\mathcal A}^{(\beta)}\right)\right)\right)'\right)\Big/\mathcal A^{(\beta)}=\ker'\left(\left(\widehat{\mathcal A}^{(\beta)}\right)'\right)\Big/\mathcal A^{(\beta)}
	\]
	\[
	=\ker'\left(\widehat{\mathcal A}^{(\beta+1)}\right)\Big/\mathcal A^{(\beta)},
	\]
    so
    \[
     \mathcal A^{(\alpha)}=\ker'\left(\widehat{\mathcal A}^{(\alpha)}\right).
    \]
	\end{proof}
\begin{corollary}\label{4.7}
	Let $\mathcal A$ be a separable or type I C*-algebra. Then
	\[
	i(\mathcal A)=i\big(\widehat{\mathcal A}\big).
	\]
	In particular, let $\Omega$ be a locally compact, Hausdorff topological space, then
	\[
	i(C_0(\Omega))=i(\Omega).
	\]
\end{corollary}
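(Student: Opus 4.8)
The plan is to deduce the corollary directly from Theorem \ref{4.6}, which identifies $\mathcal A^{(\alpha)}$ with $\ker'\big(\widehat{\mathcal A}^{(\alpha)}\big)$ for every ordinal $\alpha$. Unwinding the two index definitions, $i(\mathcal A)$ is the least $\alpha$ with $\mathcal A^{(\alpha)}=\mathcal A$ and $i\big(\widehat{\mathcal A}\big)$ is the least $\alpha$ with $\widehat{\mathcal A}^{(\alpha)}=\emptyset$ (with the value $\infty$ when no such $\alpha$ exists). Hence it suffices to show that these two classes of ordinals literally coincide, i.e. that for every ordinal $\alpha$,
\[
\mathcal A^{(\alpha)}=\mathcal A\iff\widehat{\mathcal A}^{(\alpha)}=\emptyset,
\]
after which the two minima (or the common value $\infty$) agree.

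First I would record the elementary fact that $\ker'(R)=\mathcal A$ forces $R=\emptyset$ for any subset $R\subset\widehat{\mathcal A}$. Indeed, each $[H,\pi]\in R$ is the class of a \emph{nonzero} irreducible representation, so $\ker\pi$ is a proper ideal of $\mathcal A$; thus if $R\neq\emptyset$ then $\ker'(R)=\bigcap_{[H,\pi]\in R}\ker\pi\subset\ker\pi\subsetneq\mathcal A$. The reverse implication is precisely the convention $\ker'(\emptyset)=\mathcal A$. Combining this equivalence with Theorem \ref{4.6} gives
\[
\mathcal A^{(\alpha)}=\mathcal A\iff\ker'\big(\widehat{\mathcal A}^{(\alpha)}\big)=\mathcal A\iff\widehat{\mathcal A}^{(\alpha)}=\emptyset,
\]
so the displayed biconditional holds for all $\alpha$ and therefore $i(\mathcal A)=i\big(\widehat{\mathcal A}\big)$.

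For the ``in particular'' statement I would specialize to $\mathcal A=C_0(\Omega)$. Since $C_0(\Omega)$ is commutative, its nonzero irreducible representations are one-dimensional and $\widehat{C_0(\Omega)}$ is homeomorphic to $\Omega$ via point evaluations, so $i\big(\widehat{C_0(\Omega)}\big)=i(\Omega)$ and the first part applies verbatim. Alternatively one can bypass the spectrum and argue straight from the second formula of Theorem \ref{4.6}: the ideal $C_0(\Omega)^{(\alpha)}=\{f\in C_0(\Omega):f|_{\Omega^{(\alpha)}}=0\}$ equals all of $C_0(\Omega)$ exactly when $\Omega^{(\alpha)}=\emptyset$, a standard Urysohn-type separation argument producing a function that fails to vanish on $\Omega^{(\alpha)}$ whenever that closed set is nonempty.

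There is essentially no serious obstacle here, as the whole substance is already contained in Theorem \ref{4.6}; the corollary is a bookkeeping consequence. The only point that requires a moment's care is the elementary fact above: one must use the convention $\ker'(\emptyset)=\mathcal A$ consistently, and invoke the nonvanishing of irreducible representations, so that the equivalence is genuinely two-sided and the minima (including the degenerate value $\infty$) truly match on both sides.
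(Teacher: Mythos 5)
Your proposal is correct and matches the paper's own proof: the paper likewise derives the corollary from Theorem \ref{4.6} via the equivalence $\widehat{\mathcal A}^{(\alpha)}=\emptyset \iff \ker'\big(\widehat{\mathcal A}^{(\alpha)}\big)=\mathcal A \iff \mathcal A^{(\alpha)}=\mathcal A$, so the two indices agree. You simply spell out two details the paper leaves implicit --- that $\ker'(R)=\mathcal A$ forces $R=\emptyset$ because kernels of nonzero irreducible representations are proper ideals, and that $\widehat{C_0(\Omega)}\simeq\Omega$ for the commutative case --- which is fine.
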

\begin{proof}
	
	We know that for any ordinal $\alpha$, then $\widehat{\mathcal A}^{(\alpha)}=\emptyset$ if and only if
	$\ker'(\widehat{\mathcal A}^{(\alpha)})=\mathcal A$, if and only if $\mathcal A^{(\alpha)}=\mathcal A$ by Theorem \ref{4.6}. So
	\[
	i(\mathcal A)=i\big(\widehat{\mathcal A}\big).
	\]
\end{proof}

\noindent{\bf The up estimates}

Let $H$ be a Hilbert space. For any finite dimensional subspaces $K$ and $L$ of $H$, we denote
\[
Y_{K,L}={\rm span}\big(\{(\cdot|e)f:e\in K, f\in L\}\big)\subset\mathcal K(H).
\]
\begin{lemma}\label{4.8}
Let $H$ be a Hilbert space and $1<p<\infty$. Then
\begin{itemize}
\item [(i)]
$\{Y_{K,L}:K~{\rm and}~L~{\rm are~finite~dimensional~subspaces~of}~H\}$ is a family of closed subspaces of $\mathcal K(H)$ satisfying $(\ast)$-condition.
\item [(ii)]
$\{L_p(Y_{K,L}):K~{\rm and}~L~{\rm are~finite~dimensional~subspaces~of}~H\}$ is a family of closed subspaces of $L_p(\mathcal K(H))$ satisfying $(\ast)$-condition.
\end{itemize}
\end{lemma}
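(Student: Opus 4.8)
The plan is to realize $\mathcal K(H)^\ast$ as the trace class $\mathcal S_1(H)$ under the duality $\langle T,S\rangle=\mathrm{tr}(TS)$ and to write down the required uniformly convex family of projections explicitly. For finite dimensional $K,L\subset H$ with orthogonal projections $P_K,P_L$, I would set $\Pi_{K,L}(S)=P_KSP_L$. This is a linear idempotent on $\mathcal S_1(H)$ with $\|\Pi_{K,L}\|\le1$, and the trace computation $\mathrm{tr}(\theta_{f,e}S)=(Sf\,|\,e)$ (where $\theta_{f,e}(x)=(x|e)f$) shows $\ker\Pi_{K,L}=\{S:P_KSP_L=0\}=Y_{K,L}^{\perp}$, so $\Pi_{K,L}$ is a legitimate candidate for Definition \ref{3.6}. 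By Theorem \ref{3.5}(c) it then remains to produce $\delta(\varepsilon)>0$, uniform in $K,L$, such that $\|S\|_1=1$ and $\|P_KSP_L\|_1>1-\delta$ force $\|S-P_KSP_L\|_1<\varepsilon$.

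The heart of the matter is a single Pythagorean-type trace inequality: for every projection $P$ and every $S\in\mathcal S_1(H)$,
\[
\|S\|_1^2\ge\|PS\|_1^2+\|(I-P)S\|_1^2.
\]
I would prove this by putting $X=|PS|$ and $Y=|(I-P)S|$, noting $X^2+Y^2=S^\ast PS+S^\ast(I-P)S=|S|^2$, so that $\sqrt{X^2+Y^2}=|V|$ for the column operator $V=\binom{X}{Y}:H\to H\oplus H$ and hence $\mathrm{tr}\sqrt{X^2+Y^2}=\|S\|_1$. For scalars $a,b\ge0$ with $a^2+b^2=1$ the row $R=(aI\ bI):H\oplus H\to H$ is a contraction, so $aX+bY=RV$ gives $\mathrm{tr}(aX+bY)=\|aX+bY\|_1\le\|R\|\,\|V\|_1\le\|S\|_1$; optimizing over $(a,b)$ yields $\sqrt{\|PS\|_1^2+\|(I-P)S\|_1^2}\le\|S\|_1$. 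Applying the same estimate to $S^\ast$ gives the column version $\|T\|_1^2\ge\|TQ\|_1^2+\|T(I-Q)\|_1^2$.

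With this in hand the uniform convexity is a short chase. Applying the row inequality to $S$ with $P=P_K$ and then the column inequality to $P_KS$ with $Q=P_L$ gives
\[
1=\|S\|_1^2\ge\|P_KSP_L\|_1^2+\|P_KS(I-P_L)\|_1^2+\|(I-P_K)S\|_1^2.
\]
Thus $\|P_KSP_L\|_1>1-\delta$ forces $\|P_KS(I-P_L)\|_1^2+\|(I-P_K)S\|_1^2<2\delta$, and since $S-P_KSP_L=(I-P_K)S+P_KS(I-P_L)$ the triangle inequality gives $\|S-P_KSP_L\|_1<2\sqrt{2\delta}$; taking $\delta(\varepsilon)=\varepsilon^2/8$ closes (i). Part (ii) is then immediate from Proposition \ref{3.7}(c): $\mathcal K(H)$ is of type I with one-point (hence scattered) spectrum, so it is scattered and therefore Asplund by Proposition \ref{2.5}, while (i) supplies the $(\ast)$-condition for $\{Y_{K,L}\}$, whence $\{L_p(Y_{K,L})\}$ inherits it in $L_p(\mathcal K(H))$.

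The only genuinely nontrivial point is the Pythagorean trace inequality; everything else is bookkeeping. I would stress that the naive guess $\|S\|_1^2\ge\|P_KSP_L\|_1^2+\|S-P_KSP_L\|_1^2$ is \emph{false} (already for the rank-one all-ones $2\times2$ matrix), which is precisely why the argument must split off one projection at a time and route through the column operator $V$ rather than through the corner $\Pi_{K,L}$ directly.
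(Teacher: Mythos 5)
Your proposal is correct, and its skeleton is the same as the paper's proof: the same corner projections $u\mapsto$ (compression of $u$ to $K,L$) on $\mathcal S_1(H)=\mathcal K(H)^\ast$ (up to the harmless left/right convention in the trace pairing, the paper writes $P_{K,L}u=P_LuP_K$), the same identification of the kernel with ${Y_{K,L}}^\perp$, the same modulus $\delta(\varepsilon)=\varepsilon^2/8$, verification through Theorem \ref{3.5}, and part (ii) via Proposition \ref{3.7} (c). The one point where you genuinely diverge is the proof of the key Pythagorean trace inequality $\Vert S\Vert_1^2\geq\Vert PS\Vert_1^2+\Vert(I-P)S\Vert_1^2$: the paper obtains it from a Hilbert--Schmidt factorization $u=u_1u_2$ with $\Vert u\Vert_1=\Vert u_1\Vert_2\Vert u_2\Vert_2$ and Pythagoras in $\mathcal S_2(H)$ (this is exactly its parenthetical estimate $\Vert u\Vert_1\geq\left(\Vert uP_K\Vert_1^2+\Vert uP_{K^\perp}\Vert_1^2\right)^{1/2}$), whereas you route through the column operator $V=\binom{X}{Y}$ with $X^2+Y^2=|S|^2$ and optimize over the row contractions $(aI\ bI)$. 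Both arguments are valid; the factorization proof is shorter, while your duality proof isolates the inequality as a clean standalone statement and your closing remark correctly explains why one must split off one projection at a time --- the one-shot corner version of the inequality is indeed false. Your subsequent chaining (row inequality for $S$, then column inequality for $P_KS$, giving the three-term bound and $2\sqrt{2\delta}\leq\varepsilon$) is an equivalent repackaging of the paper's two separate estimates $\Vert uP_{K^\perp}\Vert_1,\Vert P_{L^\perp}u\Vert_1<\varepsilon/2$; you use condition (c) of Theorem \ref{3.5} where the paper uses condition (b), which is immaterial. Finally, your explicit justification that $\mathcal K(H)$ is Asplund (needed to invoke Proposition \ref{3.7} (c) for part (ii)) is a detail the paper leaves implicit, and it is correct.
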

\begin{proof}
We know $\mathcal S_1(H)$, trace-class operators on $H$, is that dual of $\mathcal K(H)$ up to the action by trace (e.g. see the book \cite[Th. 4.2.1]{Murphy}). For any finite dimensional subspaces $K$ and $L$ of $H$, it is easy to see that
\[
{Y_{K,L}}^\perp=\{u\in\mathcal S_1(H):P_LuP_K=0\}.
\]
We define a projection $P_{K,L}$ on $\mathcal S_1(H)$ by given $u\mapsto P_LuP_K$. It is obvious that $\ker(P_{K,L})={Y_{K,L}}^\perp$.
For any $\varepsilon>0$, we put $\delta(\varepsilon)=\varepsilon^2/8$. If $u\in B_{\mathcal S_1(H)}$ with $\Vert u\Vert_1$-$\Vert P_{K,L}u\Vert_1<\varepsilon^2/8$. 
\[
\Vert uP_{K^\perp}\Vert_1\leq\left(\Vert u\Vert_1^2-\Vert uP_K\Vert_1^2\right)^{\frac{1}{2}}\leq\big((\Vert u\Vert_1+\Vert uP_K\Vert_1)(\Vert u\Vert_1-\Vert P_LuP_K\Vert_1)\big)^{\frac{1}{2}}<\varepsilon/2,
\]
\big(there exist $u_1,u_2\in\mathcal S_2(H)$ such that $u=u_1u_2$ and $\Vert u\Vert_1=\Vert u_1\Vert_2\Vert u_2\Vert_2$, then $\Vert u\Vert_1=\Vert u_1\Vert_2\Vert u_2\Vert_2=\Vert u_1\Vert_2\left(\Vert u_2P_K\Vert_2^2+\Vert u_2P_{K^\perp}\Vert_2^2\right)^{\frac{1}{2}}\geq\left(\Vert uP_K\Vert_1^2+\Vert uP_{K^\perp}\Vert_1^2\right)^{\frac{1}{2}}$\big). In the same way, $\Vert P_{L^\perp}u\Vert_1<\varepsilon/2$.
\[
\Vert u-P_{K,L}u\Vert_1=\Vert u-P_LuP_K\Vert_1\leq\Vert P_{L^\perp}uP_K\Vert_1+\Vert uP_{K^\perp}\Vert_1<\varepsilon/2+\varepsilon/2=\varepsilon.
\]
Therefore, (i) claim holds by Theorem \ref{3.5}. By Proposition \ref{3.7} (c), (ii) claim holds.
\end{proof}

\begin{lemma}\label{4.9}
Let $H$ be a Hilbert space and $1<p<\infty$. Then
\[
Sz(\mathcal K(H))\leq\omega,~Sz(L_p(\mathcal K(H)))\leq\omega^2.
\]
\end{lemma}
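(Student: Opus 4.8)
The plan is to derive both inequalities as immediate applications of Lemma \ref{3.11}, feeding it the $(\ast)$-families manufactured in Lemma \ref{4.8}. For the first bound, observe that whenever $K,L$ are finite-dimensional subspaces of $H$, the space $Y_{K,L}$ is itself finite-dimensional (of dimension $\dim K\cdot\dim L$), so Proposition \ref{3.1}(a) gives $Sz(Y_{K,L})=1=\omega^0$. The family $\{Y_{K,L}\}$ is directed: given $(K_1,L_1)$ and $(K_2,L_2)$, the pair $(K_1+K_2,\,L_1+L_2)$ dominates both, so $\bigcup_{K,L}Y_{K,L}$ is precisely the set of finite-rank operators on $H$, which is norm-dense in $\mathcal K(H)$; hence $\overline{\bigcup_{K,L}Y_{K,L}}=\mathcal K(H)$. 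Combining the $(\ast)$-condition from Lemma \ref{4.8}(i) with Lemma \ref{3.11} (taking $\alpha=0$) yields $Sz(\mathcal K(H))\le\omega^{0+1}=\omega$.

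For the second bound I would first estimate $Sz(L_p(Y_{K,L}))$. Since $Y_{K,L}$ is finite-dimensional, $L_p(Y_{K,L})$ is isomorphic to a finite direct sum of copies of $L_p[0,1]$, hence to $L_p[0,1]$, which is superreflexive for $1<p<\infty$; therefore $Sz(L_p(Y_{K,L}))\le\omega=\omega^1$. This last estimate can be obtained either from the classical fact that superreflexive spaces have Szlenk index $\omega$, or from the inequality $Sz\le Dz$ — valid because every $w^\ast$-slice is in particular a $w^\ast$-neighborhood, so $s_\varepsilon^\alpha\subseteq d_\varepsilon^\alpha$ and thus $Sz_\varepsilon\le Dz_\varepsilon$ — together with $Dz(L_p(Y_{K,L}))=\omega$.

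It then remains to check the density $\overline{\bigcup_{K,L}L_p(Y_{K,L})}=L_p(\mathcal K(H))$. Simple functions are dense in $L_p(\mathcal K(H))$, and a simple function taking finitely many values in $\mathcal K(H)$ can be approximated in $L_p$-norm by one taking values in a single $Y_{K,L}$: one approximates each of its finitely many values by finite-rank operators and then enlarges $K$ and $L$ to accommodate all of them simultaneously, using the directedness already noted. Lemma \ref{4.8}(ii) then supplies the $(\ast)$-condition for $\{L_p(Y_{K,L})\}$, and Lemma \ref{3.11} (now with $\alpha=1$) gives $Sz(L_p(\mathcal K(H)))\le\omega^{1+1}=\omega^2$.

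The directedness of the family and the two density claims are routine. The only genuinely delicate step is the bound $Sz(L_p(Y_{K,L}))\le\omega$, which rests on identifying $L_p$ of a finite-dimensional space as superreflexive and invoking the corresponding Szlenk estimate; once that is in place, both conclusions follow by direct substitution into the machinery already assembled in Lemmas \ref{4.8} and \ref{3.11}.
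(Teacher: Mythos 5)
Your proposal is correct and follows essentially the same route as the paper: the same family $\{Y_{K,L}\}$ with the $(\ast)$-condition from Lemma \ref{4.8}, the observation that $Sz(Y_{K,L})=\omega^0$ and $Sz(L_p(Y_{K,L}))=Sz(L_p)=\omega$, and two applications of Lemma \ref{3.11} (with $\alpha=0$ and $\alpha=1$); your justification of $Sz(L_p)\leq\omega$ via superreflexivity and $Sz\leq Dz$ is a harmless variant of the paper's appeal to uniform smoothness. Incidentally, you correctly invoke Lemma \ref{3.11} for the second bound, where the paper's citation of Lemma \ref{3.9} is evidently a typo, and your explicit directedness/density checks fill in details the paper dismisses as easy.
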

\begin{proof}
We put $I=\{(K,L):K~{\rm and}~L~{\rm are~finite~dimensional~subspaces~of}~H\}$. By Lemma \ref{4.8} $\{Y_{K,L}\}_{(K,L)\in I}$ is a family of closed subspaces of $\mathcal K(H)$ satisfying $(\ast)$-condition. It is easy to see that
\[
\overline{{\bigcup}_{(K,L)\in I}Y_{K,L}}=\mathcal K(H).
\]
Since $\dim(Y_{K,L})<\infty$ for all $(K,L)\in I$, it follows that $Sz(Y_{K,L})=1=\omega^0$ for all $(K,L)\in I$. Using Lemma \ref{3.11} we can obtain
\[
Sz(\mathcal K(H))\leq\omega.
\]

On the other hand, using the density of the simple functions in $L_p(\mathcal K(H))$ and Lemma \ref{4.8}, we can obtain $\{L_p(Y_{K,L})\}_{(K,L)\in I}$ is a family of closed subspaces of $L_p(\mathcal K(H))$ satisfying $(\ast)$-condition. and
\[
\overline{{\bigcup}_{(K,L)\in I}L_p(Y_{K,L})}=L_p(\mathcal K(H)).
\]
$Sz(L_p(Y_{K,L})=Sz(L_p)=\omega$ for each $(K,L)\in I$, since $L_p$ is uniformly smooth. By Lemma \ref{3.9},
\[
Sz(L_p(\mathcal K(H)))\leq\omega^2.
\]
\end{proof}
\begin{remark}
We know that $\mathcal K(H)\simeq H\check{\otimes}H^\ast$, where $H\check{\otimes}H^\ast$ is the injective tensor product of $H$ and $H^\ast$. By the Causey's result \cite[Th. 1.3]{Causey2017}, it is easy obtain that $Sz(\mathcal K(H))\leq\omega$. Using the result of \cite{HS,Causey2022a}, it follows that $Sz(L_p(\mathcal K(H)))\leq\omega^2$. These results are quite powerful, here I would like to give a elementary proof.
\end{remark}
 
\begin{lemma}\label{4.11}
Let $\mathcal A$ be a C*-algebra and $1<p<\infty$. If $i(\mathcal A)<\omega^\beta$, then
\[
Sz(\mathcal A)\leq\omega^\beta,~Sz(L_p(\mathcal A))\leq\omega^{1+\beta}.
\]
\end{lemma}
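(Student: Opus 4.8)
The plan is to run the nested-subspace machinery of Corollary \ref{3.13} along the Ghasemi--Koszmider filtration $\{\mathcal A^{(\xi)}\}_{\xi\le\alpha}$, where $\alpha=i(\mathcal A)$. Since $i(\mathcal A)<\omega^\beta$ is assumed, $\mathcal A$ is scattered (Theorem \ref{4.5}), hence Asplund (Proposition \ref{2.5}), and $\alpha<\omega^\beta$ is a genuine ordinal. The family $\{\mathcal A^{(\xi)}\}$ is increasing, starts at $\{0\}$, ends at $\mathcal A^{(\alpha)}=\mathcal A$, and satisfies the limit clause $\mathcal A^{(\xi)}=\overline{\bigcup_{\zeta<\xi}\mathcal A^{(\zeta)}}$ by the very definition of the derived ideals. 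The structural input needed is that each $\mathcal A^{(\xi)}$ is a closed two-sided ideal of $\mathcal A$ (Theorem \ref{2.7}), and closed ideals of a C*-algebra are $M$-ideals; thus $\{\mathcal A^{(\xi)}\}$ is a family of subspaces satisfying the $(\ast)$-condition with $\delta_{\{\mathcal A^{(\xi)}\}}(\varepsilon)\ge\varepsilon$, exactly as in the $M$-ideal example following Definition \ref{3.6}.

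With the filtration in hand, Corollary \ref{3.13} reduces everything to bounding the Szlenk index of the successive quotients. By construction $\mathcal A^{(\xi+1)}/\mathcal A^{(\xi)}=\mathcal I^{At}(\mathcal A/\mathcal A^{(\xi)})$, which by Theorem \ref{2.7}(b) is isometrically a $c_0$-sum $(\bigoplus_{j}\mathcal K(H_j))_{c_0}$ of algebras of compact operators. So the crux is a strengthening of Lemma \ref{4.9} from a single $\mathcal K(H)$ to an arbitrary such $c_0$-sum: I claim that for $\mathcal B=(\bigoplus_{j\in J}\mathcal K(H_j))_{c_0}$ one has $Sz(\mathcal B)\le\omega$ and $Sz(L_p(\mathcal B))\le\omega^2$.

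To prove this claim I would index by the directed set of pairs $(F,(K_j,L_j)_{j\in F})$ with $F\subset J$ finite and $K_j,L_j$ finite-dimensional subspaces of $H_j$, and put $Y_{F,(K_j),(L_j)}=\bigoplus_{j\in F}Y_{K_j,L_j}$, a finite-dimensional subspace of $\mathcal B$ whose union over all such indices is dense in $\mathcal B$. On the predual $\mathcal B^\ast=(\bigoplus_j\mathcal S_1(H_j))_{\ell_1}$ the natural projection $u=(u_j)\mapsto(P_{L_j}u_jP_{K_j})_{j\in F}$ has kernel $Y_{F,(K_j),(L_j)}^\perp$, and the heart of the matter is that this family of projections is \emph{uniformly} convex. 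Here the $\ell_1$-additivity of the predual norm splits $\Vert u\Vert_1-\Vert Qu\Vert_1$ into the tail $\sum_{j\notin F}\Vert u_j\Vert_1$ plus the nonnegative local defects $\Vert u_j\Vert_1-\Vert P_{L_j}u_jP_{K_j}\Vert_1$; combining the single-factor estimate from the proof of Lemma \ref{4.8} (which bounds $\Vert u_j-P_{L_j}u_jP_{K_j}\Vert_1$ by a constant times $\sqrt{\Vert u_j\Vert_1(\Vert u_j\Vert_1-\Vert P_{L_j}u_jP_{K_j}\Vert_1)}$) with a Cauchy--Schwarz step across $j$ yields a modulus independent of $F$ and of the family. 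Theorem \ref{3.5} then gives the $(\ast)$-condition for $\{Y_{F,(K_j),(L_j)}\}$, and since each summand is finite-dimensional with $Sz=1=\omega^0$, Lemma \ref{3.11} gives $Sz(\mathcal B)\le\omega$. In particular $\mathcal B$ is Asplund, so Proposition \ref{3.7}(c) promotes the $(\ast)$-condition to $\{L_p(Y_{F,(K_j),(L_j)})\}$ in $L_p(\mathcal B)$; the union is still dense, and each $Sz(L_p(Y_{F,(K_j),(L_j)}))=Sz(L_p)=\omega=\omega^1$ since $L_p$ is uniformly smooth, whence Lemma \ref{3.11} gives $Sz(L_p(\mathcal B))\le\omega^2$.

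Finally I would feed these bounds into Corollary \ref{3.13}. For $\mathcal A$ itself, $\alpha<\omega^\beta$ together with $Sz(\mathcal A^{(\xi+1)}/\mathcal A^{(\xi)})\le\omega=\omega^{0+1}$ give $Sz(\mathcal A)\le\omega^{0+\beta}=\omega^\beta$. For $L_p(\mathcal A)$ I would apply the corollary to the filtration $\{L_p(\mathcal A^{(\xi)})\}$: it satisfies the $(\ast)$-condition by Proposition \ref{3.7}(c) (using that $\mathcal A$ is Asplund), the endpoint and limit clauses transfer by density of simple functions, and each successive quotient $L_p(\mathcal A^{(\xi+1)})/L_p(\mathcal A^{(\xi)})\cong L_p(\mathcal A^{(\xi+1)}/\mathcal A^{(\xi)})$ has Szlenk index $\le\omega^2=\omega^{1+1}$; with $\alpha<\omega^\beta$ the corollary, now with $\gamma=1$, yields $Sz(L_p(\mathcal A))\le\omega^{1+\beta}$. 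The main obstacle is the uniformity in the $(\ast)$-condition for the $c_0$-sum: the single-factor modulus degrades like a square root, so uniform control across an arbitrary, possibly uncountable, index set is exactly what must be secured, and it is precisely what the Cauchy--Schwarz bound on $\sum_j\sqrt{\Vert u_j\Vert_1(\Vert u_j\Vert_1-\Vert Qu_j\Vert_1)}$ delivers.
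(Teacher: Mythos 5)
Your proposal is correct and follows the same skeleton as the paper's proof: the Ghasemi--Koszmider filtration $\{\mathcal A^{(\xi)}\}_{\xi\le i(\mathcal A)}$, the $(\ast)$-condition coming from the fact that closed ideals are M-ideals (the paper realizes the required L-projections concretely as $\varphi\mapsto\varphi z_\xi$, where $z_\xi$ is the central projection in $\mathcal A^{\ast\ast}$ with $\overline{\mathcal A^{(\xi)}}^{w^\ast}=\mathcal A^{\ast\ast}z_\xi$, which is precisely your M-ideal claim with $\delta(\varepsilon)\ge\varepsilon$), and then Corollary \ref{3.13} with $\gamma=0$ for $\mathcal A$ and $\gamma=1$ for $L_p(\mathcal A)$, the limit clause for the $L_p$-filtration being handled by density of simple functions in both treatments. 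The one place you genuinely diverge is the bound on the atomic quotients: the paper never needs your strengthened $c_0$-sum lemma, because Theorem \ref{2.7}(b) states that $\mathcal I^{At}(\mathcal A/\mathcal A^{(\xi)})$ is $\ast$-isomorphic to a subalgebra of a single $\mathcal K(H)$ (block-diagonal operators on the $\ell_2$-sum of the $H_j$), so Lemma \ref{4.9} combined with the subspace monotonicity of Proposition \ref{3.1}(b) yields $Sz\le\omega$ and $Sz(L_p(\cdot))\le\omega^2$ at once. Your direct route --- block truncations $\bigoplus_{j\in F}Y_{K_j,L_j}$, with the $\ell_1$-additivity of the predual norm splitting the defect $\Vert u\Vert_1-\Vert Qu\Vert_1$ into the local terms $d_j=\Vert u_j\Vert_1-\Vert P_{L_j}u_jP_{K_j}\Vert_1$ plus the tail $\sum_{j\notin F}\Vert u_j\Vert_1$, then Cauchy--Schwarz across blocks --- does work: the single-factor estimate of Lemma \ref{4.8} gives $\Vert u_j-P_{L_j}u_jP_{K_j}\Vert_1\le 2\sqrt{2\Vert u_j\Vert_1 d_j}$, and $\sum_j\sqrt{\Vert u_j\Vert_1 d_j}\le\bigl(\sum_j\Vert u_j\Vert_1\bigr)^{1/2}\bigl(\sum_j d_j\bigr)^{1/2}$ delivers a modulus independent of $F$ and of the index set, so Theorem \ref{3.5} and Lemma \ref{3.11} apply as you say. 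What your variant buys is a self-contained proof of $Sz(\mathcal B)\le\omega$ and $Sz(L_p(\mathcal B))\le\omega^2$ for arbitrary (possibly uncountable) $c_0$-sums without invoking the embedding into $\mathcal K(H)$; what it costs is redundancy, since the embedding-plus-monotonicity argument the paper uses is shorter and already available from its stated toolkit.
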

\begin{proof}
For any $\xi\leq i(\mathcal A)$, we let $\overline{\mathcal A^{(\xi)}}^{w^\ast}$ is weak* closure in $\mathcal A^{\ast\ast}$. It is easy to see that $\overline{\mathcal A^{(\xi)}}^{w^\ast}$ is a $w^\ast$-closed ideal of $\mathcal A^{\ast\ast}$. Hence there exist a central projection $z_\xi$ of $\mathcal A^{\ast\ast}$ such that $\overline{\mathcal A^{(\xi)}}^{w^\ast}=\mathcal A^{\ast\ast}z_\xi$. Classical conclusion is that $\overline{\mathcal A^{(\xi)}}^{w^\ast}=\mathcal A^{(\xi)\perp\perp}$, so $\mathcal A^{(\xi)\perp}={^\perp(\mathcal A^{(\xi)\perp\perp})}=\mathcal A^\ast(1-z_\xi)$, where $1$ is the unit of $\mathcal A^{\ast\ast}$. We denote a projection $P_\xi$ on $\mathcal A^\ast$ with $\ker(P_\xi)=\mathcal A^\ast(1-z_\xi)=\mathcal A^{(\xi)\perp}$ by given $\varphi\in\mathcal A^\ast\mapsto\varphi z_\xi$. For any $\varphi\in\mathcal A^\ast$ and $\varepsilon>0$, if $\Vert\varphi\Vert-\Vert P_\xi\varphi\Vert<\varepsilon$. Then
\[
\Vert\varphi-P_\xi\varphi\Vert=\Vert=\Vert\varphi(1-z_\xi)\Vert=\Vert\varphi\Vert-\Vert\varphi z_\xi\Vert=\Vert\varphi\Vert-\Vert P_\xi\varphi\Vert<\varepsilon.
\]
This implies that $\{\mathcal A^{(\xi)}\}_{\xi\leq\alpha}$ is a family of closed ideals of $\mathcal A$ satisfying $(\ast)$-condition, where $\alpha=i(\mathcal A)$. By Theorem \ref{2.7} (b) and Lemma \ref{4.9}, $Sz\left(\mathcal A^{({\xi+1})}/\mathcal A^{(\xi)}\right)\leq\omega$ for each $\xi<\alpha$. Therefore, using Corollary \ref{3.13} we can obtain
\[
Sz(\mathcal A)\leq\omega^\beta.
\]

On the other hand, for any limit ordinal $\xi\leq\alpha$, then
\[
L_p(\mathcal A^{(\xi)})=\overline{{\bigcup}_{\zeta<\xi}L_p(\mathcal A^{(\zeta)})}
\]
by the density of the simple functions in $L_p(\mathcal A^{(\xi)})$. For each $\xi<\alpha$,
\[
Sz\left(L_p(\mathcal A^{({\xi+1})})/L_p(\mathcal A^{(\xi)})\right)=Sz\left(L_p\left(\mathcal A^{({\xi+1})}/\mathcal A^{(\xi)}\right)\right)\leq\omega^2
\]
by Lemma \ref{4.9}. Using the Proposition \ref{3.7} (c) and Corollary \ref{3.13} we can obtain
\[
Sz(L_p(\mathcal A))\leq\omega^{1+\beta}.
\]
\end{proof}

\noindent{\bf The lower estimates}

Let $\mathcal A$ be a C*-algebra. The set of pure states on $\mathcal A$ is denoted by $PS(\mathcal A)$. For any $\varphi\in PS(\mathcal A)$, we denote by $\{H_\varphi,\pi_\varphi\}$ the GNS reprensentation of $\varphi$. We put $\varPsi$ is that the canonical map $PS(\mathcal A)\to\widehat{\mathcal A}$ by setting $\varphi\mapsto[H_\varphi,\pi_\varphi]$. Then $\varPsi$ is a continuous and open map by \cite[Th. 3.4.11]{Dixmier}, where $PS(\mathcal A)$ equipped with the weak* topology.

We know that for every GNS representation $\{H_\varphi,\pi_\varphi\}$ there is an unique extension the normal representation $\{H_\varphi,\tilde{\pi_\varphi}\}$ of $\mathcal A^{\ast\ast}$ onto $\pi_\varphi(\mathcal A)''=\overline{\pi_\varphi(\mathcal A)}^{so}=\mathcal B(H_\varphi)$, (e.g. see \cite{Takesaki}). We denote by $z(\varphi)$ the central projection in $\mathcal A^{\ast\ast}$ such that $\ker(\tilde{\pi_\varphi})=\mathcal A^{\ast\ast}(1-z(\varphi))$, where $1$ is the unit of $\mathcal A^{\ast\ast}$. It is easy to see that $\varphi=\varphi z_\varphi\in\mathcal A^\ast z(\varphi)$. Supposed that $\varphi,\psi\in PS(\mathcal A)$. If $\{H_\psi,\pi_\psi\}$ and $\{H_\varphi,\pi_\varphi\}$ are unitarily equivalent. It is easy to see that $z(\varphi)=z(\psi)$. If $\{H_\psi,\pi_\psi\}$ and $\{H_\varphi,\pi_\varphi\}$ are not unitarily equivalent. Then We have $z(\varphi)\perp z(\psi)$. Indeed, note that $\tilde{\pi_\varphi}(z(\psi))$ is a projection and belongs to $\pi_\varphi(\mathcal A)'=\mathbb CI_{H_\varphi}$. It follows that $\tilde{\pi_\varphi}(z(\psi))=0$ or $I_{H_\varphi}$. If $\tilde{\pi_\varphi}(z(\psi))=0$, then $z(\varphi)\perp z(\psi)$ obviously. If $\tilde{\pi_\varphi}(z(\psi))=I_{H_\varphi}$, then $z(\varphi)-z(\psi)\in\ker(\tilde{\pi_\varphi})$. It follows that $(z(\varphi)-z(\psi))(1-z(\varphi))=z(\varphi)-z(\psi)$, so $z(\varphi)=z(\psi)z(\varphi)$. In the same way, $\tilde{\pi_\psi}(z(\varphi))=I_{H_\psi}$ and $z(\psi)=z(\varphi)z(\psi)$. It follows that $z(\varphi)=z(\psi)$, then $\ker(\tilde{\pi_\varphi})=\ker(\tilde{\pi_\psi})$. $\{\pi_\varphi\oplus\pi_\psi,H_\varphi\oplus H_\psi\}$ is a representation of $\mathcal A$, we know that there is a unique extension representation of $\mathcal A^{\ast\ast}$ onto $(\pi_\varphi\oplus\pi_\psi)(\mathcal A)''=\overline{(\pi_\varphi\oplus\pi_\psi)(\mathcal A)}^{so}$, it is easy to see that this extension representation is $\{\tilde{\pi_\varphi}\oplus\tilde{\pi_\psi},H_\varphi\oplus H_\psi\}$. Since $\{H_\psi,\pi_\psi\}$ and $\{H_\varphi,\pi_\varphi\}$ are not unitarily equivalent, it is easy to show that $(\pi_\varphi\oplus\pi_\psi)(\mathcal A)'=\mathbb C I_{H_\varphi}\oplus\mathbb C I_{H_\psi}$. Then $\{\tilde{\pi_\varphi}(a)\oplus\tilde{\pi_\psi}(a):a\in\mathcal A^{\ast\ast}\}=(\pi_\varphi\oplus\pi_\psi)(\mathcal A)''=\mathcal B(H_\varphi)\oplus\mathcal B(H_\psi)$. This contradicts the $\ker(\tilde{\pi_\varphi})=\ker(\tilde{\pi_\psi})$.
\begin{lemma}\label{4.12}
Let $\mathcal A$ be a type I C*-algebra and $\varphi\in PS(\mathcal A)$. If there is $[H,\pi]\in\widehat{\mathcal A}$ such that $\ker[H,\pi]\subsetneqq\ker[H_\varphi,\pi_\varphi]$. Then $\dim(H)=\infty$, and for any $w^\ast$-neighborhood $V$ of $\varphi$ and finite dimensional subspace $K$ of $H$, there exists a vector $e\in H$ with $\Vert e\Vert=1$ and $e\perp K$ such that $(\pi(\cdot)e|e)\in V$.
\end{lemma}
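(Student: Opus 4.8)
The plan is to separate the two claims, both resting on the type~I hypothesis through the fact that $\pi(\mathcal A)\supseteq\mathcal K(H)$ \cite{Dixmier}. Since $\ker\pi\subseteq\ker\pi_\varphi$, the state $\varphi$ kills $\ker\pi$ and factors as $\varphi=\bar\varphi\circ\pi$ for a pure state $\bar\varphi$ on $\mathcal B:=\pi(\mathcal A)\cong\mathcal A/\ker\pi$, while $\pi_\varphi$ factors through an irreducible representation $\bar{\pi_\varphi}$ of $\mathcal B$ with $\ker\bar{\pi_\varphi}=\ker\pi_\varphi/\ker\pi\neq0$ by the strictness of the inclusion. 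First I would observe that any irreducible representation $\sigma$ of $\mathcal B$ with $\ker\sigma\neq0$ must annihilate the ideal $\mathcal K(H)$: otherwise $\sigma|_{\mathcal K(H)}$ would be a nonzero, hence irreducible, representation of $\mathcal K(H)$, unitarily equivalent to the identity representation, forcing $\sigma$ to be the faithful identity inclusion of $\mathcal B$ and contradicting $\ker\sigma\neq0$. Applying this to $\bar{\pi_\varphi}$ yields $\bar\varphi|_{\mathcal K(H)}=0$. The same dichotomy settles $\dim H=\infty$: were $\dim H<\infty$, then $\mathcal B=\mathcal B(H)$ is simple, so the nonzero representation $\bar{\pi_\varphi}$ would be faithful, contradicting $\ker\bar{\pi_\varphi}\neq0$.

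For the approximation, fix a basic neighbourhood $V=\{\psi:|\psi(a_i)-\varphi(a_i)|<\varepsilon,\ 1\le i\le n\}$ and the finite-dimensional $K\subseteq H$, and write $\omega_e=(\pi(\cdot)e|e)$ for a unit vector $e$. The plan has three steps. (i) As $P_K$ is finite-rank, hence compact, $\pi(\mathcal A)\supseteq\mathcal K(H)$ gives $a_0\in\mathcal A$ with $\pi(a_0)=P_K$; then $\varphi(a_0)=\bar\varphi(P_K)=0$ by the vanishing above, while $\omega_e(a_0)=\|P_Ke\|^2$. (ii) In the Jacobson topology, $\ker\pi\subseteq\ker\pi_\varphi$ means $[H_\varphi,\pi_\varphi]\in\overline{\{[H,\pi]\}}$; since $\varPsi$ is open and continuous, the open set $\varPsi(V'\cap PS(\mathcal A))$, where $V'=\{\psi:|\psi(a_i)-\varphi(a_i)|<\varepsilon/2,\ |\psi(a_0)|<\varepsilon'\}$, contains $[H_\varphi,\pi_\varphi]$ and therefore contains the point $[H,\pi]$, a preimage of which is a pure state whose GNS representation is $\pi$, that is, a vector state $\omega_{e_0}\in V'$ for some unit $e_0\in H$. (iii) The constraint $\omega_{e_0}(a_0)=\|P_Ke_0\|^2<\varepsilon'$ makes $e_0$ almost orthogonal to $K$, so $e:=(1-P_K)e_0/\|(1-P_K)e_0\|$ lies in $K^\perp$ with $\|e\|=1$ and $\|e-e_0\|\le\varepsilon'+\sqrt{\varepsilon'}$; the elementary bound $|\omega_e(a_i)-\omega_{e_0}(a_i)|\le2\|a_i\|\,\|e-e_0\|$ then places $\omega_e$ in $V$ once $\varepsilon'$ is chosen small relative to $\varepsilon$ and $\max_i\|a_i\|$. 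This unit vector $e\perp K$ with $\omega_e\in V$ is exactly what is claimed.

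The main obstacle is the orthogonality requirement $e\perp K$: the openness of $\varPsi$ by itself produces some vector state of $\pi$ near $\varphi$, but with no control on the position of $e$ relative to $K$. The device that overcomes this is the vanishing of $\bar\varphi$ on the compacts: testing a candidate state against the compact operator $P_K$ automatically forces the $K$-component of $e_0$ to have size at most $\varepsilon'$, after which projecting off $K$ and renormalizing is a controlled perturbation. I would only need to be careful that the perturbation estimate is taken uniformly over the finitely many $a_i$ and that the phase choice in bounding $\|e-e_0\|$ is handled, both of which are routine.
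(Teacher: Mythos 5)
Your proposal is correct and takes essentially the same route as the paper's proof: exploit the openness of $\varPsi$ on a neighborhood refined by a preimage $a_0$ of the compact operator $P_K$ to produce a vector state $\omega_{e_0}$ of $\pi$ near $\varphi$ with $\Vert P_Ke_0\Vert$ small, then project off $K$, renormalize, and control the perturbation. The only cosmetic difference is how the vanishing $\varphi(a_0)=0$ is secured --- you derive $\bar\varphi|_{\mathcal K(H)}=0$ from the uniqueness of the extension of the identity representation of $\mathcal K(H)$, whereas the paper chooses the preimage $a_0$ inside $\ker[H_\varphi,\pi_\varphi]$ using $\mathcal K(H)\subset\pi\big(\ker[H_\varphi,\pi_\varphi]\big)$ --- which is the same underlying fact.
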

\begin{proof}
Note that
\[
\pi^{-1}\big(\pi(\ker[H_\varphi,\pi_\varphi])\big)=\ker[H_\varphi,\pi_\varphi]+\ker[H,\pi]=\ker[H_\varphi,\pi_\varphi]\subsetneqq\mathcal A,
\]
this implies that $\pi(\ker[H_\varphi,\pi_\varphi])$ is a nontrivial ideal of $\pi(\mathcal A)$. It follows that $\dim(H)=\infty$. We know that $\{H,\pi|_{\ker[H_\varphi,\pi_\varphi]}\}$ is a non-zero irreducible representation and $\ker[H_\varphi,\pi_\varphi]$ is type I by \cite[Th 5.6.2]{Murphy}, which yields $\mathcal K(H)\subset\pi([H_\varphi,\pi_\varphi])$. Now, we choose a $\delta\in(0,1)$ and a $w^\ast$-neighborhood $U$ of $\varphi$ such that $U+\delta B_{\mathcal A^\ast}\subset V$, and choose $a\in\pi([H_\varphi,\pi_\varphi])$ such that $\pi(a)=P_K$. Let $W=U\cap\{\phi\in\mathcal A^\ast:\vert\phi(a)-\varphi(a)\vert<\delta^2/16\}=U\cap\{\phi\in\mathcal A^\ast:\vert\phi(a)\vert<\delta^2/16\}$, it is a $w^\ast$-neighborhood of $\varphi$. We know that $\varPsi(W\cap PS(\mathcal A))$ is a neighborhood of $\varPsi(\varphi)$, then there exists a ideal $I$ of $\mathcal A$ such that $\varPsi(W\cap PS(\mathcal A))=\widehat{\mathcal A}\setminus{\rm hull}'(I)$. It is easy to see that $[H,\pi]\in\varPsi(W\cap PS(\mathcal A))$. Then there is $\phi\in W\cap PS(\mathcal A)$ such that $\varPsi(\phi)=[H,\pi]$. It is easy to show that there is $f\in H$ with $\Vert f\Vert=1$ such that $\phi=(\pi(\cdot)f|f)$. Then we have $\Vert P_Kf\Vert^2=\vert(\pi(a)f|f)\vert=\vert\phi(a)\vert<\delta^2/16$. Now we let $e=(f-P_Kf)/\Vert f-P_Kf\Vert$. Then
\begin{align*}
&\big\Vert(\pi(\cdot)f|f)-(\pi_(\cdot)e|e)\big\Vert\\
\leq~&\big\Vert(\pi(\cdot)(f-P_Kf)|P_Kf)\big\Vert+\big\Vert(\pi(\cdot)(P_Kf|f-P_Kf)\big\Vert\\
&+\big\Vert(\pi(\cdot)P_Kf|P_Kf)\big\Vert+\big(1-\Vert f-P_Kf\Vert^2\big)\big\Vert(\pi(\cdot)e|e)\big\Vert\\
<~&\delta/4+\delta/4+\delta/4+\delta/4=\delta.
\end{align*}
Therefore $(\pi(\cdot)e|e)\in\phi+\delta B_{\mathcal A^\ast}\subset U+\delta B_{\mathcal A^\ast}\subset V$ and $e\perp K$.
\end{proof}

The following lemma \ref{4.13} and \ref{4.15} can be seen as the non-commutative version of \cite[Lem 4.1]{Causey2017b}. For the proof of the main conclusion, we only need Lemma \ref{4.13}. If the reader is not interested, you can skip Lemma \ref{4.15} and will not affect the subsequent reading. The Lemma \ref{4.15} is a more refined promotion of the Lemma \ref{4.13}. 

\begin{lemma}\label{4.13}
	Let $\mathcal A$ be a C*-algebra. If an ordinal $\xi<i\big(\widehat{\mathcal A}\big)$.
\begin{itemize}
\item [(i)] For any $\varepsilon\in(0,2)$, then $\varPsi^{-1}\big(\widehat{\mathcal A}^{(\xi)}\big)\subset s_\varepsilon^\xi(B_{\mathcal A^\ast})$.
\item [(ii)] If $\mathcal A$ is type I. For any $\varepsilon\in(0,1)$, then $\varPsi^{-1}\big(\widehat{\mathcal A}^{(\xi)}\big)\subset d_\varepsilon^{\omega\xi}(B_{\mathcal A^\ast})$.
\end{itemize}
\end{lemma}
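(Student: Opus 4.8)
The plan is to prove both (i) and (ii) by transfinite induction on $\xi$, the two halves sharing the same skeleton but diverging at the successor step. For the base case $\xi=0$ one only notes that $\varPsi^{-1}\big(\widehat{\mathcal A}\big)=PS(\mathcal A)\subset B_{\mathcal A^\ast}=s_\varepsilon^0(B_{\mathcal A^\ast})=d_\varepsilon^0(B_{\mathcal A^\ast})$. At a limit ordinal $\xi$ both sides are intersections: $\varPsi^{-1}\big(\widehat{\mathcal A}^{(\xi)}\big)=\bigcap_{\beta<\xi}\varPsi^{-1}\big(\widehat{\mathcal A}^{(\beta)}\big)$, while $s_\varepsilon^\xi=\bigcap_{\beta<\xi}s_\varepsilon^\beta$ and, since $\omega\xi=\sup_{\beta<\xi}\omega\beta$ is again a limit ordinal, $d_\varepsilon^{\omega\xi}=\bigcap_{\beta<\xi}d_\varepsilon^{\omega\beta}$; so both limit steps follow termwise from the inductive hypotheses. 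All the content is in the successor step $\xi=\beta+1$.

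For (i) the successor step I would run as follows. Fix $\varphi\in\varPsi^{-1}\big(\widehat{\mathcal A}^{(\beta+1)}\big)$; since $\widehat{\mathcal A}^{(\beta+1)}\subset\widehat{\mathcal A}^{(\beta)}$ the inductive hypothesis already gives $\varphi\in s_\varepsilon^\beta(B_{\mathcal A^\ast})$. Let $V$ be any $w^\ast$-neighbourhood of $\varphi$. Because $\varPsi$ is continuous and open, $\varPsi\big(V\cap PS(\mathcal A)\big)$ is an open neighbourhood of $[H_\varphi,\pi_\varphi]$; as $[H_\varphi,\pi_\varphi]\in\big(\widehat{\mathcal A}^{(\beta)}\big)'$ is a cluster point of $\widehat{\mathcal A}^{(\beta)}$, this neighbourhood contains some $[H,\pi]\in\widehat{\mathcal A}^{(\beta)}$ distinct from $[H_\varphi,\pi_\varphi]$. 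Lifting, there is $\psi\in V\cap PS(\mathcal A)$ with $\varPsi(\psi)=[H,\pi]$, and the inductive hypothesis gives $\psi\in s_\varepsilon^\beta(B_{\mathcal A^\ast})$. Finally $\psi$ and $\varphi$ have inequivalent GNS representations, so by the disjointness computation preceding Lemma~\ref{4.12} one has $z(\varphi)\perp z(\psi)$ and hence $\|\varphi-\psi\|=2$. Thus $\varphi,\psi\in V\cap s_\varepsilon^\beta(B_{\mathcal A^\ast})$ with $\|\varphi-\psi\|=2>\varepsilon$, so ${\rm diam}\big(V\cap s_\varepsilon^\beta(B_{\mathcal A^\ast})\big)>\varepsilon$; as $V$ was arbitrary, $\varphi\in s_\varepsilon^{\beta+1}(B_{\mathcal A^\ast})$.

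For (ii) the successor step is where the factor $\omega$ is born and where I expect the real work. Here type I enters: from $[H_\varphi,\pi_\varphi]\in\big(\widehat{\mathcal A}^{(\beta)}\big)'$ I would first produce a representation $[H,\pi]\in\widehat{\mathcal A}^{(\beta)}$ with $\ker[H,\pi]\subsetneq\ker[H_\varphi,\pi_\varphi]$ — via the hull--kernel (specialization) structure of the type I spectrum — so that Lemma~\ref{4.12} applies, forcing $\dim H=\infty$ and providing, inside \emph{any} $w^\ast$-neighbourhood $V$ of $\varphi$ and orthogonal to any prescribed finite-dimensional $K\subset H$, a unit vector $e$ with $\omega_e=(\pi(\cdot)e|e)\in V$. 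Since $\varPsi(\omega_e)=[H,\pi]\in\widehat{\mathcal A}^{(\beta)}$, the outer hypothesis gives $\omega_e\in d_\varepsilon^{\omega\beta}(B_{\mathcal A^\ast})$. I would then bridge $d_\varepsilon^{\omega\beta}$ up to $d_\varepsilon^{\omega\beta+\omega}=d_\varepsilon^{\omega(\beta+1)}$ by a finite inner induction on $n$ built on a dyadic tree of mutually orthogonal vector states: at level $n$ one works with the equal-weight average of $2^n$ orthogonal states $\tfrac{1}{2^n}\sum_i\omega_{e_i}$, writes it as the midpoint of its two orthogonal half-averages (two disjoint states, hence at distance $2$, so each half lies at distance $1>\varepsilon$ from the midpoint because $\varepsilon<1$), and uses the convexity of slices — any slice through the midpoint keeps at least one half-average — to promote membership from level $n-1$ to level $n$. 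The separation exploited at every level is exactly $1$, so nothing degrades across the $\omega$ levels; this is precisely why the hypothesis is $\varepsilon\in(0,1)$ rather than $(0,2)$.

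The hard part is the bookkeeping of these convex combinations. The half-averages are \emph{not} pure states, so their membership in the intermediate sets $d_\varepsilon^{\omega\beta+k}$ is not handed to us by $\varPsi^{-1}$; to feed the next slice I must know the midpoint also survives to level $\omega\beta+k-1$, which forces me to strengthen the inductive statement so that it covers all finite-rank normal states of the representation $[H,\pi]$ (the averages of orthogonal vector states), not merely the pure vector states. Pinning down this enlarged inductive class, and checking that each average indeed lies in every derived set below its nominal level so that the slice argument can be iterated, is the technical core; the anchoring case $\beta=0$ is free, since there $\omega\beta=0$ and $d_\varepsilon^0(B_{\mathcal A^\ast})=B_{\mathcal A^\ast}$ contains every such mixed state automatically. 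A secondary obstacle is the very first step of (ii), namely extracting the strictly-smaller-kernel representation $[H,\pi]$ needed to invoke Lemma~\ref{4.12}, which is exactly where the type I hypothesis is used.
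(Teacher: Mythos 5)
Your part (i) and the overall machinery of part (ii) match the paper's proof: the same transfinite induction, the same lifting of neighbourhoods through the continuous open map $\varPsi$, the separation $\Vert\varphi-\psi\Vert\geq\vert\langle\varphi-\psi,z(\varphi)-z(\psi)\rangle\vert=2$ for inequivalent pure states, and in (ii) the dyadic averaging of $2^n$ states, the convexity of the derived sets and of slices, the exact separation $1$ explaining $\varepsilon\in(0,1)$, the anchoring at $d_\varepsilon^{\omega\beta}(B_{\mathcal A^\ast})$, and the intersection over $n$ to reach $d_\varepsilon^{\omega\beta+\omega}$. Even your worry about enlarging the inductive class to the mixed averages is resolved in the paper exactly as you anticipate: it proves by an inner induction on $n$ that the sets of such averages lie in $d_\varepsilon^{\omega\zeta+n}(B_{\mathcal A^\ast})$, with midpoints surviving to level $n-1$ by convexity.

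However, there is a genuine gap at the very first step of your successor case for (ii): you assume that every $[H_\varphi,\pi_\varphi]\in\big(\widehat{\mathcal A}^{(\beta)}\big)'$ admits some $[H,\pi]\in\widehat{\mathcal A}^{(\beta)}$ with $\ker[H,\pi]\subsetneqq\ker[H_\varphi,\pi_\varphi]$, so that Lemma \ref{4.12} always applies. This is false in general, and fails precisely in the case the lemma is designed to recover: for commutative $\mathcal A=C_0(\Omega)$ (type I, with spectrum of arbitrarily large Cantor--Bendixson rank) the kernels of irreducible representations are maximal modular ideals, pairwise incomparable, so a strictly-smaller-kernel representation never exists and your dyadic tree of orthogonal vector states inside a single representation has nowhere to live. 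The paper's proof accordingly splits the successor step into two cases. When such a $[H_\psi,\pi_\psi]$ with $\ker[H_\psi,\pi_\psi]\subset\ker[H_\varphi,\pi_\varphi]$ exists, type I is used (via the fact that inequivalent irreducible representations of a type I algebra have distinct kernels) to upgrade the inclusion to strict, and then your vector-state construction (the paper's sets $B_n$) runs through Lemma \ref{4.12}; note type I is used here to \emph{legitimize} the case, not to \emph{produce} it. When no such representation exists, the paper instead selects, inside any convex $w^\ast$-neighbourhood $V$ of $\varphi$, $2^n$ pure states lifted through $\varPsi$ whose GNS classes lie in $\widehat{\mathcal A}^{(\beta)}$ and are pairwise inequivalent, inductively choosing each new class outside the finitely many sets ${\rm hull}'\big(\ker[H_{\varphi_i},\pi_{\varphi_i}]\big)$ already used; the separation of the half-averages then comes not from orthogonal vectors but from the pairwise orthogonality of the central covers $z(\varphi_i)$, by pairing the difference against $\sum_i z(\varphi_i)-\sum_j z(\varphi_j)$. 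This second construction (the paper's sets $A_n$) is the missing idea; once both cases feed the same dyadic mechanism, the argument closes as you describe.
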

\begin{proof}
\begin{itemize}
\item [(i)]
We will be done by transfinite induction on $\xi$. It is trivially true for $\xi=0$. It also passes easily to limit ordinal. So we let $\xi=\zeta+1$ and assume now that it is true for $\zeta$. If $\varphi\in\varPsi^{-1}(\widehat{\mathcal A}^{(\xi)})\subset s_\varepsilon^\zeta(B_{\mathcal A^\ast})$ and $V$ is a $w^\ast$-neighborhood of $\varphi$. Then $\varPsi(\varphi)\in\widehat{\mathcal A}^{(\xi)}$ and $\varPsi(V\cap PS(\mathcal A))$ is neighborhood of $\varPsi(\varphi)$, so there is a $[H,\pi]\in\widehat{\mathcal A}^{(\zeta)}\cap\big(\varPsi(V\cap PS(\mathcal A))\setminus\{\varPsi(\varphi)\}\big)$. Therefore, there has a $\psi\in\varPsi^{-1}(\widehat{\mathcal A}^{(\zeta)})\cap V\subset s_\varepsilon^\zeta(B_{\mathcal A^\ast})\cap V$ such that $\varPsi(\psi)=[H_\psi,\pi_\psi]=[H,\pi]$. Since $[H_\psi,\pi_\psi]\neq[H_\varphi,\pi_\varphi]$, we can obtain
\[
\Vert\varphi-\psi\Vert\geq\big\vert\langle\varphi-\psi,z(\varphi)-z(\psi)\rangle\big\vert=2
\]
Therefore, ${\rm diam}\big(s_\varepsilon^\zeta(B_{\mathcal A^\ast})\cap V\big)\geq 2>\varepsilon$. It follows that $\varphi\in s_\varepsilon^{\zeta+1}(B_{\mathcal A^\ast})=s_\varepsilon^\xi(B_{\mathcal A^\ast})$. So
\[
\varPsi^{-1}\big(\widehat{\mathcal A}^{(\xi)}\big)\subset s_\varepsilon^\xi(B_{\mathcal A^\ast}).
\]
This finishes our induction.
\item [(ii)]
We will be done by transfinite induction on $\xi$. It is trivially true for $\xi=0$. It also passes easily to limit ordinal. So we let $\xi=\zeta+1$ and assume now that it is true for $\zeta$. For each $n\in\mathbb N$, let
\[
A_n=\bigg\{\frac{1}{2^n}{\sum}_{i=1}^{2^n}\varphi_i:\big\{[H_{\varphi_i},\pi_{\varphi_i}]\big\}_{i=1}^{2^n}\subset \widehat{\mathcal A}^{(\zeta)}~{\rm and}~[H_{\varphi_i},\pi_{\varphi_i}]\neq[H_{\varphi_j},\pi_{\varphi_j}]~{\rm for~any}~i\neq j\bigg\},
\]
\begin{align*}
B_n=\bigg\{&\frac{1}{2^n}{\sum}_{i=1}^{2^n}(\pi_\varphi(\cdot)e_i|e_i):[H_\varphi,\pi_\varphi]\in \widehat{\mathcal A}^{(\zeta)}~{\rm and}~\{e_i\}_{i=1}^{2^n}\\
&{\rm is~a~normalized~orthonormal~sequence~in}~H_\varphi\bigg\}.\\
\end{align*}
We claim that for each $n\in\mathbb N$, $A_n\cup B_n\subset d_\varepsilon^{\omega\zeta+n}(B_{\mathcal A^\ast})$. We show by induction on $n$. It is trivially true for $n=0$ by the inductive hypothesis. Assume that $n\geq1$ and the claim holds for $n-1$. Fix some $\phi=\frac{1}{2^n}{\sum}_{i=1}^{2^n}\varphi_i\in A_n$, $\psi=\frac{1}{2^n}{\sum}_{i=1}^{2^n}(\pi_\varphi(\cdot)e_i|e_i)\in B_n$. Then $\phi_1=\frac{1}{2^{n-1}}{\sum}_{i=1}^{2^{n-1}}\varphi_i,~\phi_2=\frac{1}{2^{n-1}}{\sum}_{i=2^{n-1}+1}^{2^n}\varphi_i\in d_\varepsilon^{\omega\zeta+n-1}(B_{\mathcal A^\ast})$ and $\psi_1=\frac{1}{2^{n-1}}{\sum}_{i=1}^{2^{n-1}}(\pi_\varphi(\cdot)e_i|e_i),~\psi_2=\frac{1}{2^{n-1}}{\sum}_{i=2^{n-1}+1}^{2^n}(\pi_\varphi(\cdot)e_i|e_i)\in d_\varepsilon^{\omega\zeta+n-1}(B_{\mathcal A^\ast})$. Moreover, $\phi=\frac{1}{2}\phi_1+\frac{1}{2}\phi_2$ and $\psi=\frac{1}{2}\psi_1+\frac{1}{2}\psi_2$. Since $d_\varepsilon^{\omega\zeta+n-1}(B_{\mathcal A^\ast})$ is convex, then $\phi,\psi\in d_\varepsilon^{\omega\zeta+n-1}(B_{\mathcal A^\ast})$. Let $S$ be a $w^\ast$-open slice containing $\phi$ and $T$ be a $w^\ast$-open slice containing $\psi$. The convexity yields that $S$ must contain either $\phi_1$ or $\phi_2$ and $T$ must contain either $\psi_1$ or $\psi_2$. Since $[H_{\varphi_i},\pi_{\varphi_i}]\neq[H_{\varphi_j},\pi_{\varphi_j}]$ for any $i\neq j$, it follows that $z(\varphi_i)\perp z(\varphi_j)$ for any $i\neq j$. We get
\begin{align*}
&\Vert\phi-\phi_1\Vert=\Vert\phi-\phi_2\Vert=\frac{1}{2}\Vert\phi_1-\phi_2\Vert\\
\geq&\frac{1}{2^n}\left\vert\left\langle{\sum}_{i=1}^{2^{n-1}}\varphi_i-{\sum}_{i=2^{n-1}+1}^{2^n}\varphi_i,{\sum}_{i=1}^{2^n-1}z(\varphi_i)-{\sum}_{i=2^{n-1}+1}^{2^n}z(\varphi_i)\right\rangle\right\vert=1>\varepsilon.\\
\end{align*}
On the other hand, we know that $\tilde{\pi_\varphi}(\mathcal A^{\ast\ast})=\mathcal B(H_\varphi)$. Therefore,
\begin{align*}
&\Vert\psi-\psi_1\Vert=\Vert\psi-\psi_2\Vert=\frac{1}{2}\Vert\psi_1-\psi_2\Vert\\
=&\frac{1}{2^n}\left\Vert{\sum}_{i=1}^{2^{n-1}}(\pi_\varphi(\cdot)e_i|e_i)-{\sum}_{i=2^{n-1}+1}^{2^n}(\pi_\varphi(\cdot)e_i|e_i)\right\Vert=1>\varepsilon.\\
\end{align*}
It follows that ${\rm diam}\big(S\cap d_\varepsilon^{\omega\zeta+n-1}(B_{\mathcal A^\ast})\big)>\varepsilon$ and ${\rm diam}\big(T\cap d_\varepsilon^{\omega\zeta+n-1}(B_{\mathcal A^\ast})\big)>\varepsilon$. This yields $A_n\cup B_n\subset d_\varepsilon^{\omega\zeta+n}(B_{\mathcal A^\ast})$. This finishes induction on $n$.

Now, for any $\varphi\in\varPsi^{-1}\big(\widehat{\mathcal A}^{(\xi)}\big)$ and $n\in\mathbb N$. For any convex $w^\ast$-neighborhood $V$ of $\varphi$. Then $\varPsi(V\cap PS(\mathcal A))$ is a neighborhood of $\varPsi(\varphi)$. Next, we will consider the following two cases. 

The first case, there is no $[H_\psi,\pi_\psi]\in\widehat{\mathcal A}^{(\zeta)}\setminus\big\{[H_\varphi,\pi_\varphi]\big\}$ such that $\ker[H_\psi,\pi_\psi]\subset\ker[H_\varphi,\pi_\varphi]$. We will inductively select $\{\varphi_i\}_{i=1}^{2^n}$ in $V\cap \varPsi^{-1}\big(\widehat{\mathcal A}^{(\zeta)}\big)$ such that $[H_{\varphi_i},\pi_{\varphi_i}]\neq[H_\varphi,\pi_\varphi]$ and $[H_{\varphi_i},\pi_{\varphi_i}]\neq[H_{\varphi_j},\pi_{\varphi_j}]$ for any $i\neq j$. Since $\varPsi(\varphi)\in\widehat{\mathcal A}^{(\xi)}$, we can pick a $\varphi_1\in V\cap PS(\mathcal A)$ with $[H_{\varphi_1},\pi_{\varphi_1}]\in\widehat{\mathcal A}^{(\zeta)}\setminus\{[H_\varphi,\pi_\varphi]\}$. If $\varphi_1,\cdots,\varphi_{k-1}$, $2\leq k\leq2^n$, have been chosen. Note that $\ker[H_{\varphi_i},\pi_{\varphi_i}]\nsubseteq\ker[H_\varphi,\pi_\varphi]$ for $i=1,\cdots,k-1$, it follows that $\varPsi(V\cap PS(\mathcal A))\setminus\bigcup_{i=1}^{k-1}{\rm hull}'\big(\ker[H_{\varphi_i}\pi_{\varphi_i}]\big)$ is a neighborhood of $\varPsi(\varphi)$. Therefore, we can choose a $\varphi_k\in V\cap PS(\mathcal A)$ such that $[H_{\varphi_k},\pi_{\varphi_k}]\in\mathcal A^{(\zeta)}\setminus\big[\{[H_\varphi,\pi_\varphi]\}\cup\bigcup_{i=1}^{k-1}{\rm hull}'\big(\ker[H_{\varphi_i}\pi_{\varphi_i}]\big)\big]$. It is easy to see that $[H_{\varphi_i},\pi_{\varphi_i}]\neq[H_\varphi,\pi_\varphi]$ for each $i$, $1\leq i\leq k$, and $[H_{\varphi_i},\pi_{\varphi_i}]\neq[H_{\varphi_j},\pi_{\varphi_j}]$ for any $1\leq i\neq j\leq k$. This completes the induction. Note that $\frac{1}{2^n}{\sum}_{i=1}^{2^n}\varphi_i\in V\cap A_n$ and $V$ is arbitrary, which implies that $\varphi\in\overline{A_n}^{w^\ast}\subset d_\varepsilon^{\omega\zeta+n}(B_{\mathcal A^\ast})$.

The second case, there is some $[H_\psi,\pi_\psi]\in\widehat{\mathcal A}^{(\zeta)}\setminus\big\{[H_\varphi,\pi_\varphi]\big\}$ such that $\ker[H_\psi,\pi_\psi]\subset\ker[H_\varphi,\pi_\varphi]$. Since $\mathcal A$ is tpye I, it follows that $\ker[H_\psi,\pi_\psi]\subsetneqq\ker[H_\varphi,\pi_\varphi]$ by \cite[Th 5.6.3]{Murphy}. Next, we will inductively select a normalized orthonormal sequence $\{e_i\}_{i=1}^{2^n}$ in $H_\psi$ such that $(\pi_\psi(\cdot)e_i|e_i)\in V$ for each $1\leq i\leq2^n$. To start the induction, we can pick $e_1\in H_\psi$ with $\Vert e_1\Vert=1$ such that $(\pi_\psi(\cdot)e_1|e_1)\in V$ by Lemma \ref{4.12}. If $e_1,\cdots,e_{k-1}$, $2\leq k\leq2^n$, have been chosen. By Lemma \ref{4.12}, there exists a $e_k\in H_\psi$ with $\Vert e_k\Vert=1$ and $e_k\perp{\rm span}\big(\{e_1,\cdots,e_{k-1}\}\big)$ such that $(\pi_\psi(\cdot)e_k|e_k)\in V$. This completes the induction. Note that $\frac{1}{2^n}{\sum}_{i=1}^{2^n}(\pi_\psi(\cdot)e_i|e_i)\in V\cap B_n$ and $V$ is arbitrary, which implies that $\varphi\in\overline{B_n}^{w^\ast}\subset d_\varepsilon^{\omega\zeta+n}(B_{\mathcal A^\ast})$.

Hence in both cases we get $\varphi\in d_\varepsilon^{\omega\zeta+n}(B_{\mathcal A^\ast})$ for all $n\in\mathbb N$. This implies that
\[
\varphi\in{\bigcap}_{n<\omega}d_\varepsilon^{\omega\zeta+n}(B_{\mathcal A^\ast})=d_\varepsilon^{\omega\zeta+\omega}(B_{\mathcal A^\ast})=d_\varepsilon^{\omega\xi}(B_{\mathcal A^\ast}).
\]
So $\varPsi^{-1}\big(\widehat{\mathcal A}^{(\xi)}\big)\subset d_\varepsilon^{\omega\xi}(B_{\mathcal A^\ast})$. This finishes our induction.
\end{itemize}
\end{proof}

\begin{remark}
Let $\mathcal A$ be a C*-algebra. If there are $\varphi,\psi\in PS(\mathcal A)$ such that $\ker(\pi_\varphi)=\ker(\pi_\psi)$, but $\{H_\varphi,\pi_\varphi\}$ is not unitarily equivalent to $\{H_\psi,\pi_\psi\}$. If $U$ is an open set of $\widehat{\mathcal A}$, it is easy to see that $[H_\varphi,\pi_\varphi]\in U$ if and only if $[H_\varphi,\pi_\varphi]\in U$. For any $\varepsilon\in(0,2)$ and ordinal $\xi$, it is easy to show that $\varPsi^{-1}\big(\{[H_\varphi,\pi_\varphi],[H_\psi,\pi_\psi]\}\big)\subset s_\varepsilon^\xi(B_{\mathcal A^\ast})$.
\end{remark}

For a real or complex linear space $X$ and $S$ is a non-empty subset of $X$. We denote by
\[
{\rm aco}(S)=\left\{{\sum}_{i=1}^k\lambda_ix_i:\lambda_1,\cdots,\lambda_k~{\rm are~scalars~with~}{\sum}_{i=1}^k\vert\lambda_i\vert\leq1,x_i\in S\right\}
\]
the $absolute~convex~hull$ of $S$.
\begin{lemma}\label{4.15}
Let $\mathcal A$ be a type I C*-algebra. If an ordinal $\xi<i\big(\widehat{\mathcal A}\big)$.
\begin{itemize}
\item [(i)] For any $\varepsilon\in(0,2)$, then $\overline{{\rm aco}}^\ast\big(\varPsi^{-1}\big(\widehat{\mathcal A}^{(\xi)}\big)\big)\subset s_\varepsilon^\xi(B_{\mathcal A^\ast})$.
\item [(ii)] For any $\varepsilon\in(0,1)$, then $\overline{{\rm aco}}^\ast\big(\varPsi^{-1}\big(\widehat{\mathcal A}^{(\xi)}\big)\big)\subset d_\varepsilon^{\omega\xi}(B_{\mathcal A^\ast})$.
\end{itemize}
\end{lemma}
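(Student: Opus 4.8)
The plan is to bootstrap Lemma \ref{4.15} from Lemma \ref{4.13} by analysing how the weak*-closed absolute convex hull interacts with the two derivations. The two parts demand different treatment, because the dentability derivative preserves absolute convexity whereas the Szlenk derivative does not. For part (ii) I would first isolate the structural fact that drives everything: \emph{if $K\subset X^\ast$ is weak*-compact and absolutely convex, then so is $d_\varepsilon(K)$}. Indeed $d_\varepsilon(K)$ is obtained from $K$ by deleting the union of all relatively $w^\ast$-open slices of diameter at most $\varepsilon$, so it is relatively closed, hence weak*-compact; if a convex combination of two points of $d_\varepsilon(K)$ lay in a slice $\{y:\mathrm{Re}\,\langle y,g\rangle>c\}$ of diameter $\le\varepsilon$, then one of the two points would already lie in that slice, contradicting its membership in $d_\varepsilon(K)$, which gives convexity; and rotating the defining functional by a unimodular scalar carries small slices isometrically onto small slices, so the deleted set is balanced, whence $d_\varepsilon(K)$ is balanced and therefore absolutely convex. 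A routine transfinite induction (successor steps apply this fact, limit steps are intersections) then shows that $d_\varepsilon^\alpha(B_{\mathcal A^\ast})$ is absolutely convex and weak*-compact for every ordinal $\alpha$. Granting this, part (ii) is immediate: by Lemma \ref{4.13}(ii) we have $\varPsi^{-1}(\widehat{\mathcal A}^{(\xi)})\subset d_\varepsilon^{\omega\xi}(B_{\mathcal A^\ast})$, and since the right-hand side is weak*-closed and absolutely convex it contains the weak*-closed absolute convex hull of the left-hand side.

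For part (i) the set $s_\varepsilon^\xi(B_{\mathcal A^\ast})$ need not be convex, so I would instead prove $\overline{\mathrm{aco}}^\ast\big(\varPsi^{-1}(\widehat{\mathcal A}^{(\xi)})\big)\subset s_\varepsilon^\xi(B_{\mathcal A^\ast})$ by transfinite induction on $\xi$. The base case is trivial and the limit case follows from the monotonicity of the $s$-derivatives together with the inclusions $\varPsi^{-1}(\widehat{\mathcal A}^{(\xi)})\subset\varPsi^{-1}(\widehat{\mathcal A}^{(\zeta)})$ for $\zeta<\xi$. The successor case $\xi=\zeta+1$ is where the work lies. Writing $T_\zeta=\overline{\mathrm{aco}}^\ast\big(\varPsi^{-1}(\widehat{\mathcal A}^{(\zeta)})\big)$, the inductive hypothesis gives $T_\zeta\subset s_\varepsilon^\zeta(B_{\mathcal A^\ast})$, so since $s_\varepsilon$ is monotone it suffices to show that each $\varphi\in T_{\zeta+1}$ lies in $s_\varepsilon\big(s_\varepsilon^\zeta(B_{\mathcal A^\ast})\big)$, for which I need only produce, inside every weak*-neighborhood $V$ of $\varphi$, two points of $T_\zeta$ at distance greater than $\varepsilon$. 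Fix such a $V$ and approximate $\varphi$ by a finite combination $\varphi'=\sum_{i=1}^k\lambda_i\varphi_i$ with $\varphi_i\in\varPsi^{-1}(\widehat{\mathcal A}^{(\zeta+1)})$ and $\sum_i|\lambda_i|\le1$, taken close enough to $\varphi$ that the points constructed below remain in $V$. Each $\varPsi(\varphi_i)$ is a cluster point of $\widehat{\mathcal A}^{(\zeta)}$, so, exactly as in the proof of Lemma \ref{4.13}(i) and using that $\varPsi$ is continuous and open, I can choose pure states mapping into $\widehat{\mathcal A}^{(\zeta)}$ that are weak*-close to the $\varphi_i$ and whose GNS representations are pairwise inequivalent; inequivalence forces the central supports $z(\cdot)$ to be pairwise orthogonal, so that norms of linear combinations of such states add.

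With these approximants I would then form two elements of $\mathrm{aco}\big(\varPsi^{-1}(\widehat{\mathcal A}^{(\zeta)})\big)\subset T_\zeta$ of the shape
\[
\mu^{\pm}=\sum_{i=1}^k\lambda_i\psi_i^{\pm}+\tfrac{\rho}{2}\big(a^{\pm}-b^{\pm}\big),\qquad \rho=1-\sum_i|\lambda_i|,
\]
where $\psi_i^{\pm},a^{\pm},b^{\pm}$ are pairwise distinct approximating states, the $\psi_i^{\pm}$ clustering to $\varphi_i$ and the four states $a^{\pm},b^{\pm}$ clustering to one fixed generator (if $\varphi=0$, take $k=0$ and cluster $a^{\pm},b^{\pm}$ about any element of the nonempty set $\widehat{\mathcal A}^{(\zeta+1)}$). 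The mean-zero corrections $\tfrac{\rho}{2}(a^{\pm}-b^{\pm})$ are weak*-negligible, so both $\mu^{\pm}$ can be made as close to $\varphi'$ as desired and hence lie in $V$; the sum of the absolute values of their coefficients equals $\sum_i|\lambda_i|+\rho=1$, so $\mu^{\pm}\in T_\zeta$; and because all states appearing in $\mu^+-\mu^-$ have pairwise orthogonal central supports, $\|\mu^+-\mu^-\|=2\big(\sum_i|\lambda_i|+\rho\big)=2>\varepsilon$. Thus $\mathrm{diam}\big(V\cap s_\varepsilon^\zeta(B_{\mathcal A^\ast})\big)\ge 2>\varepsilon$, giving $\varphi\in s_\varepsilon^{\zeta+1}(B_{\mathcal A^\ast})$ and closing the induction.

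The main obstacle is precisely this successor step of part (i): the distance-$2$ estimate of Lemma \ref{4.13}(i) must survive the passage to the absolute convex hull, and a naive replacement of a single component $\lambda_{i_0}\varphi_{i_0}$ by a nearby state yields only diameter $2\max_i|\lambda_i|$, which may fall well below $\varepsilon$. The spare-mass perturbation $\tfrac{\rho}{2}(a^{\pm}-b^{\pm})$ is what restores the full distance $2$ regardless of how the mass is distributed among the $\lambda_i$; it is the noncommutative analogue of the mean-zero measure perturbations that place $\lambda\delta_\omega$ in the first Szlenk derivative of $B_{\mathcal A^\ast}$ for every $|\lambda|\le1$ in the commutative model $C([0,\omega])$. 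The one point demanding genuine care is that the finitely many approximating pure states can be chosen \emph{simultaneously} weak*-close to their targets and with pairwise orthogonal central supports; this is exactly what the openness of $\varPsi$ together with the orthogonality dichotomy for central supports recorded before Lemma \ref{4.12} provides, since from an infinite supply of distinct nearby points of $\widehat{\mathcal A}^{(\zeta)}$ one may select as many mutually inequivalent representatives as the construction requires.
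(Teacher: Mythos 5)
Your part (ii) is correct and is essentially the paper's own argument: the paper likewise deduces (ii) directly from Lemma \ref{4.13}(ii), using that the dentability derived sets $d_\varepsilon^{\alpha}(B_{\mathcal A^\ast})$ are convex and invariant under multiplication by unimodular scalars, hence absolutely convex and weak*-closed. Part (i), however, has a genuine gap at the successor step, and it is exactly the point you flag at the end and then resolve incorrectly. Your construction of $\mu^{\pm}$ requires, inside a prescribed weak*-neighborhood of each $\varphi_i$, several pure states lying over $\widehat{\mathcal A}^{(\zeta)}$ whose GNS representations are \emph{pairwise inequivalent}, and you justify this by claiming that ``from an infinite supply of distinct nearby points of $\widehat{\mathcal A}^{(\zeta)}$ one may select as many mutually inequivalent representatives as the construction requires.'' But $\widehat{\mathcal A}$ is in general only $T_0$: the fact that $\varPsi(\varphi_i)$ is a cluster point of $\widehat{\mathcal A}^{(\zeta)}$ guarantees \emph{one} other point of $\widehat{\mathcal A}^{(\zeta)}$ in each neighborhood, and in a non-$T_1$ space this can be the \emph{same single point} for every neighborhood, so no infinite supply exists. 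Concretely, take $\mathcal A=\mathcal K(H)^{\sim}$ (the unitization of the compacts, a type I algebra): $\widehat{\mathcal A}$ has exactly two points, the identity representation $[H,\pi_0]$ and the character $\chi$ with kernel $\mathcal K(H)$, and $i\big(\widehat{\mathcal A}\big)=2$. With $\xi=1$, $\zeta=0$ and $\varphi=\tfrac12\chi$, your scheme demands six pure states with pairwise orthogonal central supports near $\chi$ --- impossible, since there are only two equivalence classes in total (all vector states of $\pi_0$ share one central support $z(\pi_0)$). Yet the lemma is true there: one must witness the large diameter \emph{inside a single representation}.

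This is precisely why the paper splits the representations into the set $F$ (those $[H_i,\pi_i]$ for which some $[H,\pi]\in\widehat{\mathcal A}^{(\zeta)}$ satisfies $\ker[H,\pi]\subsetneqq\ker[H_i,\pi_i]$) and its complement $E$. On $E$ your argument does work, and your mean-zero spare-mass pair $\tfrac{\rho}{2}(a^{\pm}-b^{\pm})$, with both states clustering to a common target so that the correction is automatically small on the test elements $x_1,\dots,x_t$, is in fact a somewhat cleaner device than the paper's dimension-count, which selects $2t+2$ fresh states and extracts combinations lying exactly in $\bigcap_{1\leq i\leq t}\ker\big(\langle x_i,\cdot\rangle\big)$ with prescribed total mass. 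On $F$, however, the paper replaces inequivalent classes by vector states $(\pi'_k(\cdot)e_{i,j}|e_{i,j})$ on an orthonormal sequence in a single infinite-dimensional representation, supplied by Lemma \ref{4.12}, and obtains the norm-additivity $\big\Vert\sum_j c_j(\pi'_k(\cdot)e_j|e_j)\big\Vert=\sum_j\vert c_j\vert$ from $\tilde{\pi}(\mathcal A^{\ast\ast})=\mathcal B(H)$ rather than from orthogonality of central supports. Note also that your proof of (i) never invokes the type I hypothesis, whereas Lemma \ref{4.13}(i) holds for arbitrary C*-algebras and Lemma \ref{4.15} is stated for type I precisely because Lemma \ref{4.12} (together with the strict kernel inclusion obtained from type I) is needed to handle the case $F$; an argument for (i) making no use of type I should already have been a warning sign.
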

\begin{proof}
(i): We will be done by transfinite induction on $\xi$. It is trivially true for $\xi=0$. It also passes easily to limit ordinal. So we let $\xi=\zeta+1$ and assume now that it is true for $\zeta$.

For any $\varphi\in{\rm aco}\big(\varPsi^{-1}\big(\widehat{\mathcal A}^{(\xi)}\big)\big)$. Then there is a finite non-empty subset $\big\{[H_1,\pi_1],\cdots,[H_m,\pi_m]\big\}$ of $\widehat{\mathcal A}^{(\xi)}$ such that for each $1\leq i\leq m$ there are $\{\varphi_{i,j}\}_{1\leq j\leq n_i}\subset PS(\mathcal A)$  and scalars $\{\lambda_{i,j}\}_{1\leq j\leq n_i}$ such that $[H_{\varphi_{i,j}},\pi_{\varphi_{i,j}}]=[H_i,\pi_i]$ for each $1\leq j\leq n_i$, $\sum_{i=1}^m\sum_{j=1}^{n_i}\vert\lambda_{i,j}\vert\leq1$ and
\[
\varphi={\sum}_{i=1}^m{\sum}_{j=1}^{n_i}\lambda_{i,j}\varphi_{i,j}.
\]
Let $V$ be any $w^\ast$-neighborhood of $\varphi$. Fix $x_1,\cdots,x_t\in\mathcal A$, and $\delta>0$ such that
\[
U=\{\phi\in\mathcal A^\ast:\vert\phi(x_i)-\varphi(x_i)\vert<\delta~{\rm and}~i=1,\cdots,t\}\subset V.
\]
Let $F$ be the those $i\in\{1,\cdots,m\}$ such that there is some $[H,\pi]\in\widehat{\mathcal A}^{(\zeta)}$ such that $\ker[H,\pi]\subsetneqq\ker[H_i,\pi_i]\}$. We can find a partition $\{F_1,\cdots,F_s\}$ of $F$ and $\big\{[H'_1,\pi'_1],\cdots,[H'_s,\pi'_s]\big\}\subset\widehat{\mathcal A}^{(\zeta)}$ such that for each $1\leq k\leq s$, $\ker[H'_k,\pi'_k]\subsetneqq\ker[H_i,\pi_i]$ for every $i\in F_k$. Using Lemma \ref{4.12} and by passing induction, for any $k=1,\cdots,s$, we can select a normalized orthonormal sequence $\{e_{i,j}:i\in F_k, 1\leq j\leq 2n_i\}$ in $H'_k$ such that $(\pi'_k(\cdot)e_{i,j}|e_{i,j})\in\varphi_{i,j}-\varphi+U$ for any $i\in F_k$ and $1\leq j\leq 2n_i$. Put
\[
\phi_{i,j}=(\pi'_k(\cdot)e_{i,j}|e_{i,j}),~~1\leq k\leq s,~i\in F_k,~{\rm and}~1\leq j\leq 2n_i.
\]
On that other hand, put $E=\{1,\cdots,m\}\setminus F$. For any $i\in E$, $1\leq j\leq n_i$, and $\big\{[K_1,\rho_1],\cdots,[K_n,\rho_n]\big\}\subset \mathcal A^{(\zeta)}$. Note that $\varPsi\big((\varphi_{i,j}-\varphi+U)\cap PS(\mathcal A)\big)\setminus\bigcup_{j=1}^n{\rm hull}'\big(\ker[K_j,\rho_j]\big)$ is a neighborhood of $[H_i,\pi_i]$, then we can choose a $\phi\in(\varphi_{i,j}-\varphi+U)\cap PS(\mathcal A)$ such that $[H_\phi,\pi_\phi]\in\mathcal A^{(\zeta)}\setminus\{[K_1,\rho_1],\cdots,[K_n,\rho_n]\}$. Therefore using this property and by passing induction, we can find a subset $\{\phi_{i,j}:i\in E,1\leq j\leq 2n_i\}$ such that $\phi_{i,j}\in\varphi_{i,j}-\varphi+U$ and
\[
[H_{\phi_{i,j}},\pi_{\phi_{i,j}}]\in\mathcal A^{(\zeta)}\setminus\big\{[H_{\phi_{u,v}},\pi_{\phi_{u,v}}]:u\neq i~{\rm or}~v\neq j\big\}\big].
\]
Put
\[
\phi={\sum}_{i=1}^m{\sum}_{j=1}^{n_i}\lambda_{i,j}\phi_{i,j}
\]
and
\[
\phi'={\sum}_{i=1}^m{\sum}_{j=1}^{n_i}\lambda_{i,j}\phi_{i,n_i+j}.
\]
If $F=\emptyset$. Then we can choose $\{\psi_1,\cdots,\psi_{2t+2}\}\subset\varphi_{1,1}-\varphi+U$ such that
\[
[H_{\psi_k},\pi_{\psi_k}]\in\mathcal A^{(\zeta)}\setminus\big\{[H_{\phi_{i,j}},\pi_{\phi_{i,j}}]:1\leq i\leq m~{\rm and}~1\leq j\leq 2n_i\big\}
\]
for any $k=1,\cdots,2t+2$, and
\[
[H_{\psi_k},\pi_{\psi_k}]\neq[H_{\psi_{k'}},\pi_{\psi_{k'}}],~1\leq k\neq k'\leq 2t+2.
\]
Since $\psi_k\in\mathcal A^\ast z(\psi_k)$, it is easy to see that $\{\psi_k\}_{k=1}^{2t+2}$ are linearly independent. Therefore, there exists
\[
\psi={\sum}_{k=1}^{t+1}a_k\psi_k,~\psi'={\sum}_{k=1}^{t+1}a'_k\psi_{t+1+k}\in{\bigcap}_{1\leq i\leq t}\ker\big(\langle x_i,\cdot\rangle\big)
\]
such that
\[
\Vert\phi+\psi\Vert=\left\Vert{\sum}_{i=1}^m{\sum}_{j=1}^{n_i}\lambda_{i,j}\phi_{i,j}z(\phi_{i,j})+{\sum}_{k=1}^{t+1}a_k\psi_kz(\psi_k)\right\Vert={\sum}_{i=1}^m{\sum}_{j=1}^{n_i}\vert\lambda_{i,j}\vert+{\sum}_{k=1}^{t+1}\vert a_k\vert=1
\]
and
\[
\Vert\phi'+\psi'\Vert=\left\Vert{\sum}_{i=1}^m{\sum}_{j=1}^{n_i}\lambda_{i,j}\phi_{i,n_i+j}z(\phi_{i,n_i+j})+{\sum}_{k=1}^{t+1}a'_k\psi_{t+1+k}z(\psi_{t+1+k})\right\Vert={\sum}_{i=1}^m{\sum}_{j=1}^{n_i}\vert\lambda_{i,j}\vert+{\sum}_{k=1}^{t+1}\vert a'_k\vert=1.
\]
By the induction hypothesis, it follows that $\phi+\psi,\phi'+\psi'\in{\rm aco}\big(\varPsi^{-1}\big(\widehat{\mathcal A}^{(\zeta)}\big)\big)\cap U\subset s_\varepsilon^\zeta(B_{\mathcal A^\ast})\cap V$. It is easy to show that
\[
\Vert\phi+\psi-(\phi'+\psi')\Vert=2>\varepsilon.
\]
If $F\neq\emptyset$. We choose some $i'\in F_1$. By Lemma \ref{4.12}, we know that $\dim(H_1)=\infty$. Then we can select a normalized orthonormal sequence $\{g_1,\cdots,g_{2t+2}\}$ in $H'_1$ which are orthogonal to $\{e_{i,j}:i\in F_1,1\leq j\leq 2n_i\}$. It is easy to see that $\big\{(\pi'_1(\cdot)g_1|g_1),\cdots,(\pi'_1(\cdot)g_{2t+2}|g_{2t+2})\big\}$ are linearly independent. Therefore, there exists some
\[
\sigma={\sum}_{k=1}^{t+1}b_k(\pi'_1(\cdot)g_k|g_k),~\sigma'={\sum}_{k=1}^{t+1}b'_k(\pi'_1(\cdot)g_{t+1+k}|g_{t+1k})\in{\bigcap}_{1\leq i\leq t}\ker\big(\langle x_i,\cdot\rangle\big)
\]
such that
\[
\Vert\phi+\sigma\Vert={\sum}_{i=1}^m{\sum}_{j=1}^{n_i}\vert\lambda_{i,j}\vert+{\sum}_{j=1}^{t+1}\vert b_j\vert=1.
\]
and
\[
\Vert\phi'+\sigma'\Vert={\sum}_{i=1}^m{\sum}_{j=1}^{n_i}\vert\lambda_{i,j}\vert+{\sum}_{j=1}^{t+1}\vert b'_j\vert=1.
\]
It follows that $\phi+\sigma,\phi+\sigma'\in{\rm aco}\big(\varPsi^{-1}\big(\widehat{\mathcal A}^{(\zeta)}\big)\big)\cap W\subset s_\varepsilon^\zeta(B_{\mathcal A^\ast})\cap V$. It is easy to show that
\[
\Vert\phi+\sigma-(\phi'+\sigma')\Vert=2>\varepsilon.
\]
To sum up, ${\rm diam}(V\cap s_\varepsilon^\zeta(B_{\mathcal A^\ast}))\geq2>\varepsilon$, this implies that $\varphi\in s_\varepsilon^\xi(B_{\mathcal A^\ast})$. So ${\rm aco}\big(\varPsi^{-1}\big(\widehat{\mathcal A}^{(\xi)}\big)\big)\subset s_\varepsilon^\xi(B_{\mathcal A^\ast})$. It follows that $\overline{{\rm aco}}^\ast\big(\varPsi^{-1}\big(\widehat{\mathcal A}^{(\xi)}\big)\big)\subset s_\varepsilon^\xi(B_{\mathcal A^\ast})$. This finishes our induction.

(ii): For any $a\in\mathbb C$ with $\vert a\vert=1$. We know that the map
\[
\varphi\in\mathcal A^\ast\mapsto a\varphi\in\mathcal A^\ast
\]
is $w^\ast$-$w^\ast$ homeomorphism and linearly isometric onto. It is easy to see that
\[
d_\varepsilon^{\omega\xi}(B_{\mathcal A^\ast})=d_\varepsilon^{\omega\xi}(a\cdot B_{\mathcal A^\ast})=a\cdot d_\varepsilon^{\omega\xi}(B_{\mathcal A^\ast}).
\]
By Lemma \ref{4.13} (ii), it follows that $d_\varepsilon^{\omega\xi}(B_{\mathcal A^\ast})\neq\emptyset$.
Choose a element $\varphi\in d_\varepsilon^{\omega\xi}(B_{\mathcal A^\ast})$, then $-\varphi\in d_\varepsilon^{\omega\xi}(B_{\mathcal A^\ast})$. By convexity of $d_\varepsilon^{\omega\xi}(B_{\mathcal A^\ast})$, which yields that $0=(\varphi-\varphi)/2\in d_\varepsilon^{\omega\xi}(B_{\mathcal A^\ast})$. We let $S_{\mathbb C}=\{a\in\mathbb C:\vert a\vert=2\}$. By Lemma \ref{4.13} (ii) and the convexity of $d_\varepsilon^{\omega\xi}(B_{\mathcal A^\ast})$, we can obtain that
\[
\overline{{\rm aco}}^\ast\big(\varPsi^{-1}\big(\widehat{\mathcal A}^{(\xi)}\big)\big)=\overline{{\rm co}}^\ast\left(\{0\}\cup \big[S_{\mathbb C}\cdot\varPsi^{-1}\big(\widehat{\mathcal A}^{(\xi)}\big)\big]\right)\subset d_\varepsilon^{\omega\xi}(B_{\mathcal A^\ast}).
\]
\end{proof}

Now, we are ready to state and prove the main theorem of this section as follows.
\begin{theorem}\label{4.16}
Let $\mathcal A$ be an infinite dimensional C*-algebra and $1<p<\infty$. Then
\[
Sz(\mathcal A)=\Gamma'(i(\mathcal A))~and~Dz(\mathcal A)=Sz(L_p(\mathcal A))=\omega Sz(\mathcal A)=\omega\Gamma'(i(\mathcal A)).
\]
\end{theorem}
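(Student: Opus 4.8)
The plan is to bracket each index between matching upper and lower bounds. If $\mathcal A$ is not scattered then $i(\mathcal A)=\infty$ by Theorem \ref{4.5}, while $Sz(\mathcal A)=Dz(\mathcal A)=\infty$ by Proposition \ref{2.5} and $Sz(L_p(\mathcal A))\ge Dz(\mathcal A)=\infty$ by Theorem \ref{3.2}; the conventions $\Gamma'(\infty)=\infty$, $\omega\cdot\infty=\infty$ then close this case. So assume $\mathcal A$ is scattered, hence of type I with $i(\mathcal A)=i(\widehat{\mathcal A})<\infty$ by Proposition \ref{2.5}(e) and Corollary \ref{4.7}. For the upper bounds I let $\beta$ be least with $i(\mathcal A)<\omega^\beta$, so that $\omega^\beta=\Gamma'(i(\mathcal A))$ by strict monotonicity of $\zeta\mapsto\omega^\zeta$; Lemma \ref{4.11} gives $Sz(\mathcal A)\le\omega^\beta=\Gamma'(i(\mathcal A))$ and $Sz(L_p(\mathcal A))\le\omega^{1+\beta}=\omega\Gamma'(i(\mathcal A))$, and Theorem \ref{3.2} gives $Dz(\mathcal A)\le Sz(L_p(\mathcal A))\le\omega\Gamma'(i(\mathcal A))$.

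For the lower bounds the first attempt is Lemma \ref{4.13}(i): for $\xi<i(\mathcal A)$ and $\varepsilon\in(0,2)$ one has $\emptyset\ne\varPsi^{-1}(\widehat{\mathcal A}^{(\xi)})\subset s_\varepsilon^\xi(B_{\mathcal A^\ast})$, whence $Sz(\mathcal A)\ge i(\mathcal A)$. Since $Sz(\mathcal A)$ is a power of $\omega$ (Proposition \ref{3.1}(c)), this already yields $Sz(\mathcal A)\ge\Gamma'(i(\mathcal A))$ \emph{unless} $i(\mathcal A)$ is itself a power of $\omega$. To remove this single obstruction I pass to the unitization $\widetilde{\mathcal A}=\mathcal A\oplus\mathbb C1$, which is unital, scattered and type I, and in which $\mathcal A$ is a closed ideal, hence an M-ideal, so $\mathcal A$ satisfies the $(\ast)$-condition of Definition \ref{3.6}. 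By Theorem \ref{3.9} (and its verbatim analogue for the $w^\ast$-dentability index, valid because the restriction map carries $w^\ast$-open slices of $B_{\mathcal A^\ast}$ back to $w^\ast$-open slices of $B_{\widetilde{\mathcal A}^\ast}$) one gets $Sz(\widetilde{\mathcal A})=Sz(\mathcal A)$ and $Dz(\widetilde{\mathcal A})=Dz(\mathcal A)$. Because $\widetilde{\mathcal A}$ is unital its spectrum $\widehat{\widetilde{\mathcal A}}$ is quasi-compact, so its Cantor--Bendixson derivation cannot first vanish at a limit ordinal (a nested family of nonempty closed sets has nonempty intersection); hence $i(\widetilde{\mathcal A})=i(\widehat{\widetilde{\mathcal A}})$ is a successor $\gamma+1$, with $\gamma\ge1$ since a unital infinite-dimensional scattered algebra cannot have $i=1$. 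Finally $i(\mathcal A)\le i(\widetilde{\mathcal A})\le i(\mathcal A)+1$ (the extra character adds at most one derivation level, exactly as in the $C(\widetilde\Omega)$ computation), and since $i(\mathcal A)\ge1$ forces the least power of $\omega$ above $i(\mathcal A)$ to exceed $i(\mathcal A)+1$ as well, we get $\Gamma'(i(\widetilde{\mathcal A}))=\Gamma'(i(\mathcal A))$. Thus it suffices to bound $\widetilde{\mathcal A}$ from below.

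On $\widetilde{\mathcal A}$, Lemma \ref{4.13}(i) at $\xi=\gamma$ gives $s_\varepsilon^\gamma(B_{\widetilde{\mathcal A}^\ast})\ne\emptyset$, so $Sz(\widetilde{\mathcal A})\ge\gamma+1=i(\widetilde{\mathcal A})$; being a successor $\ge2$ this is not a power of $\omega$, so Proposition \ref{3.1}(c) forces $Sz(\widetilde{\mathcal A})\ge\Gamma'(i(\widetilde{\mathcal A}))$. Likewise Lemma \ref{4.13}(ii) (where type I is used) at $\xi=\gamma$ gives $d_\varepsilon^{\omega\gamma}(B_{\widetilde{\mathcal A}^\ast})\ne\emptyset$ for $\varepsilon\in(0,1)$, so $Dz(\widetilde{\mathcal A})\ge\omega\gamma+1>\omega\gamma$. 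Writing $\omega^\zeta=\Gamma'(\gamma+1)$ (equivalently the least power of $\omega$ exceeding $\gamma$, since $\gamma\ge1$), an elementary check shows the least power of $\omega$ exceeding $\omega\gamma$ is $\omega^{1+\zeta}$: indeed $\omega^{1+\zeta}=\omega\cdot\omega^\zeta\ge\omega(\gamma+1)>\omega\gamma$, whereas a power $\omega^\nu>\omega\gamma$ with $\nu<1+\zeta$ would have $\nu=1+\nu'$ with $\nu'<\zeta$, giving $\omega^{\nu'}\le\gamma$ and $\omega^\nu=\omega\cdot\omega^{\nu'}\le\omega\gamma$, a contradiction. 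Since $Dz(\widetilde{\mathcal A})$ is a power of $\omega$ exceeding $\omega\gamma$, it is $\ge\omega^{1+\zeta}=\omega\Gamma'(i(\widetilde{\mathcal A}))$. Transferring back, $Sz(\mathcal A)\ge\Gamma'(i(\mathcal A))$ and $Dz(\mathcal A)\ge\omega\Gamma'(i(\mathcal A))$, so the first chain gives $Sz(\mathcal A)=\Gamma'(i(\mathcal A))$ and hence $\omega Sz(\mathcal A)=\omega\Gamma'(i(\mathcal A))$, while $Dz(\mathcal A)=\omega\Gamma'(i(\mathcal A))$, and the squeeze $Dz(\mathcal A)\le Sz(L_p(\mathcal A))\le\omega\Gamma'(i(\mathcal A))$ pins $Sz(L_p(\mathcal A))=\omega\Gamma'(i(\mathcal A))$.

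The main obstacle is the gap between what the lower-estimate Lemma \ref{4.13} delivers (nonemptiness of $s_\varepsilon^\xi$, resp.\ $d_\varepsilon^{\omega\xi}$, only for $\xi<i(\mathcal A)$) and the sharp targets $\Gamma'(i(\mathcal A))$ and $\omega\Gamma'(i(\mathcal A))$, which lie one power of $\omega$ higher precisely when $i(\mathcal A)$ is a power of $\omega$. Closing it rests on two inputs working together: the rigidity that $Sz$ and $Dz$ are always powers of $\omega$ (Proposition \ref{3.1}(c)), and the unitization, whose role is to convert a possibly limit (in particular power-of-$\omega$) index into a successor index — the latter resting on quasi-compactness of the spectrum of a unital C*-algebra. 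The one point in the plan that must be verified rather than quoted is the $d_\varepsilon$-analogue of Theorem \ref{3.9}, but this follows from the same induction as Lemma \ref{3.8} once one observes that restriction pulls slices back to slices.
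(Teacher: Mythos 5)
Your upper bounds and your $Sz$ lower bound are correct, but note that the lower-bound route is genuinely different from the paper's. The paper never unitizes: it exploits the fact that $Sz_\varepsilon(K)$ and $Dz_\varepsilon(K)$ are never limit ordinals (the derived sets are nested and $w^\ast$-compact, so at a limit stage a nonempty intersection survives), applies Lemma \ref{4.13} to $\mathcal A$ itself with $\omega^\alpha\le i(\mathcal A)<\omega^{\alpha+1}$ to get $Sz_\varepsilon(B_{\mathcal A^\ast})\ge\omega^\alpha$ and $Dz_\varepsilon(B_{\mathcal A^\ast})\ge\omega^{1+\alpha}$ for a fixed $\varepsilon$, and then lets successorness plus Proposition \ref{3.1}(c) jump these to $\omega^{\alpha+1}$ and $\omega^{1+\alpha+1}$. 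You put the compactness on the other side instead: quasi-compactness of the spectrum of the unitization forces $i(\widetilde{\mathcal A})$ to be a successor, and Theorem \ref{3.9} (legitimately applicable, since the ideal $\mathcal A$ is an M-ideal in $\widetilde{\mathcal A}$ and hence satisfies the $(\ast)$-condition, exactly as the paper notes after Definition \ref{3.6}) transfers $Sz(\widetilde{\mathcal A})=Sz(\mathcal A)$. For the Szlenk index this is sound, and it even treats the case $i(\mathcal A)=1$ (e.g.\ $\mathcal K(H)$) uniformly, a case the paper's own argument handles only via Proposition \ref{3.1}(a).

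The $Dz$ lower bound, however, has a genuine gap: you need $Dz(\widetilde{\mathcal A})\le Dz(\mathcal A)$, i.e.\ a $d_\varepsilon$-analogue of Theorem \ref{3.9}, and your justification --- that restriction pulls slices back to slices, so the induction of Lemma \ref{3.8} goes through verbatim --- fails at the decisive step. In the successor step of Lemma \ref{3.8} one does not merely pull $V$ back to $X^\ast$: one intersects the pullback with $\{\phi:\vert\phi(y)\vert>1-\delta(\varepsilon)\}$, and this localization is what allows the induction hypothesis to be applied to $\phi|_Y,\psi|_Y$ and yields $\Vert\phi-P\phi\Vert,\Vert\psi-P\psi\Vert<\varepsilon$. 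An intersection of $w^\ast$-neighborhoods is a $w^\ast$-neighborhood, but the intersection of a $w^\ast$-open slice with that set is not a slice, so the induction collapses for $d_\varepsilon$; no slice version of Lemma \ref{3.8} or Theorem \ref{3.9} is proved (or used) anywhere in the paper, and this is precisely the known obstruction that makes $w^\ast$-dentability computations require separate arguments in the literature. The repair is to abandon the unitization for $Dz$ and argue as the paper does: Lemma \ref{4.13}(ii), applied to $\mathcal A$ directly, gives $d_\varepsilon^{\omega\xi}(B_{\mathcal A^\ast})\ne\emptyset$ for every $\xi<i(\mathcal A)$, so for fixed $\varepsilon\in(0,1)$ one gets $Dz_\varepsilon(B_{\mathcal A^\ast})\ge\omega^{1+\alpha}$ whenever $\omega^\alpha\le i(\mathcal A)$ with $\alpha\ge1$; since $Dz_\varepsilon$ is a successor and $\omega^{1+\alpha}$ a limit, $Dz_\varepsilon(B_{\mathcal A^\ast})>\omega^{1+\alpha}$, and Proposition \ref{3.1}(c) forces $Dz(\mathcal A)\ge\omega^{1+\alpha+1}$. (When $i(\mathcal A)$ is finite only the case $i(\mathcal A)=1$ is delicate: there Lemma \ref{4.13}(ii) is vacuous, and one can instead note that $Dz(\mathcal A)=\omega$ would make $\mathcal A$ superreflexive, which no infinite-dimensional C*-algebra is, so $Dz(\mathcal A)\ge\omega^2$ --- a point the paper itself passes over silently.)
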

\begin{proof}
If $\mathcal A$ is not scattered, then $i(\mathcal A)=\infty$ by Theorem \ref{4.5}. Conclusion is trivial.

If $\mathcal A$ is scattered, then $\mathcal A$ is type I by Proposition \ref{2.5}. Since ${\rm dim}(\mathcal A)=\infty$, then we can put $\Gamma'(i(\mathcal A))=\omega^{\alpha+1}$ for some ordinal $\alpha\geq0$. Then $\omega^\alpha\leq i(\mathcal A)<\omega^{\alpha+1}$. By Theorem \ref{3.2} and Lemma \ref{4.11}, we can obtain that
\[
Sz(\mathcal A)\leq\omega^{\alpha+1},~Dz(\mathcal A)\leq Sz(L_p(\mathcal A))\leq\omega^{1+\alpha+1}.
\]
 
On the other hand, for any $0<\varepsilon<1$, by Corollary \ref{4.7} and Lemma \ref{4.13}, it follows that
\[
Sz_\varepsilon(B_{\mathcal A^\ast})\geq\omega^\alpha,~Dz(\mathcal A)\geq\omega^{1+\alpha}.
\]
Since both $Sz_\varepsilon(B_{\mathcal A^\ast})$ and $Dz_\varepsilon(B_{\mathcal A^\ast})$ are successor ordinal and by Proposition \ref{3.1} (c), then
\[
Sz(\mathcal A)\geq\omega^{\alpha+1},~~Dz(\mathcal A)\geq\omega^{1+\alpha+1}.
\]
To sum up, we obtain
\[
Sz(\mathcal A)=\Gamma'(i(\mathcal A))~and~Dz(\mathcal A)=Sz(L_p(\mathcal A))=\omega Sz(\mathcal A)=\omega\Gamma'(i(\mathcal A)).
\]
\end{proof}

Combining Corollary \ref{4.7} and Theorem \ref{4.16} gives us the following:
\begin{corollary}\label{4.17}
Let $\mathcal A$ be an infinite dimension separable or type I C*-algebra. Then
\[
Sz(\mathcal A)=\Gamma'\big(i\big(\widehat{\mathcal A}\big)\big)~and~Dz(\mathcal A)=\omega\Gamma'\big(i\big(\widehat{\mathcal A}\big)\big)
\]
In particular, $\mathcal A$ is infinite dimension scattered C*-algebra, the same conclusion holds.
\end{corollary}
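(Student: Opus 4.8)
The plan is to obtain this as an immediate formal consequence of the two results cited, with no new analysis required; the entire substance has already been carried out in Theorem \ref{4.16} (the index computation) and Corollary \ref{4.7} (the identification of $i(\mathcal A)$ with $i(\widehat{\mathcal A})$). First I would observe that since $\mathcal A$ is infinite dimensional, Theorem \ref{4.16} applies verbatim and yields, for any $1<p<\infty$,
\[
Sz(\mathcal A)=\Gamma'(i(\mathcal A)) \quad\text{and}\quad Dz(\mathcal A)=\omega\Gamma'(i(\mathcal A)).
\]
All that remains is to replace the noncommutative Cantor--Bendixson index $i(\mathcal A)$ by the topological index $i(\widehat{\mathcal A})$ of the spectrum.

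Second, because $\mathcal A$ is assumed to be separable or of type I, Corollary \ref{4.7} gives precisely $i(\mathcal A)=i(\widehat{\mathcal A})$. I would stress that this equality holds uniformly, including the non-scattered case where both sides equal $\infty$ (the conventions $\Gamma'(\infty)=\infty$ and $\omega\cdot\infty=\infty$ then make the formulas valid as well). Substituting $i(\mathcal A)=i(\widehat{\mathcal A})$ into the two displayed formulas immediately produces
\[
Sz(\mathcal A)=\Gamma'\big(i\big(\widehat{\mathcal A}\big)\big)\quad\text{and}\quad Dz(\mathcal A)=\omega\Gamma'\big(i\big(\widehat{\mathcal A}\big)\big),
\]
which is the asserted conclusion.

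Finally, for the ``in particular'' clause I would invoke Proposition \ref{2.5}: the equivalence of conditions (a) and (e) there shows that every scattered C*-algebra is automatically of type I. Hence an infinite dimensional scattered C*-algebra satisfies the type I hypothesis of the main assertion, and the same two formulas hold for it. I do not expect any genuine obstacle in this argument; the only point deserving a word of care is confirming that Corollary \ref{4.7} supplies the index equality in exactly the generality (separable \emph{or} type I) needed here, and that the scattered case reduces to the type I case via Proposition \ref{2.5}.
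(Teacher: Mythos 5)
Your proposal is correct and coincides with the paper's own argument: the paper derives Corollary \ref{4.17} exactly by combining Theorem \ref{4.16} with the equality $i(\mathcal A)=i\big(\widehat{\mathcal A}\big)$ from Corollary \ref{4.7}, with the scattered case reducing to type I via Proposition \ref{2.5}. Your extra remark that the identification of indices holds uniformly in the non-scattered case (both sides being $\infty$, consistent with the conventions $\Gamma'(\infty)=\infty$ and $\omega\cdot\infty=\infty$) is a correct and worthwhile point of care that the paper leaves implicit.
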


R.M. Causey \cite{Causey2017}  proved that the Szlenk index of an injective tensor product $X\check{\otimes}Y$ of Banach spaces $X$ and $Y$ is $\max\{Sz(X),Sz(Y)\}$, and \cite{Causey2022} proved that the Szlenk index of a projective tensor product $C(K)\widehat{\otimes}C
(L)$ of continuous functions on arbitrary scattered, compact, Hausdorff space is $\max\{Sz(C(K)),Sz(C(L))\}$. Next, we will show the similar conclusion holds for the C*-tensor product of C*-algebras.

\begin{theorem}\label{4.18}
Let $\mathcal A$ and $\mathcal B$ be non-zero C*-algebras, $\mathcal A\otimes_\beta\mathcal B$ is a C*-tensor product of $\mathcal A$ and $\mathcal B$. Then
\[
Sz(\mathcal A\otimes_\beta\mathcal B)=\max\{Sz(\mathcal A),Sz(\mathcal B)\},
\]
\[
Dz(\mathcal A\otimes_\beta\mathcal B)=\max\{Dz(\mathcal A),Dz(\mathcal B)\}.
\]
\end{theorem}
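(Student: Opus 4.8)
The plan is to reduce the entire statement to the noncommutative Cantor--Bendixson index via Theorem \ref{4.16} and Corollary \ref{4.17}, and to prove the Szlenk identity by establishing a (general) lower estimate and an upper estimate separately; the $w^\ast$-dentability identity will then come out by running the same argument with the dentability halves of those results. Concretely, Corollary \ref{4.17} gives $Sz=\Gamma'(i(\widehat{\,\cdot\,}))$ and $Dz=\omega\Gamma'(i(\widehat{\,\cdot\,}))$ for type I algebras, so once the Szlenk statement is known the dentability statement follows from $\omega\max\{a,b\}=\max\{\omega a,\omega b\}$ together with Theorem \ref{2.1} (which disposes of the non--type I case, since there $Sz=Dz=\infty$). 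I will treat the degenerate case where $\mathcal A\otimes_\beta\mathcal B$ is finite dimensional (i.e. both factors are) directly via Proposition \ref{3.1}(a).

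For the lower estimate, valid for an arbitrary C*-cross norm $\beta$, I fix norm-one elements $a_0\in\mathcal A$, $b_0\in\mathcal B$. Since every C*-norm on the algebraic tensor product is a cross norm, the maps $a\mapsto a\otimes b_0$ and $b\mapsto a_0\otimes b$ are linear isometries, so $\mathcal A$ and $\mathcal B$ embed as closed (Banach) subspaces of $\mathcal A\otimes_\beta\mathcal B$. Proposition \ref{3.1}(b) then yields $\max\{Sz(\mathcal A),Sz(\mathcal B)\}\le Sz(\mathcal A\otimes_\beta\mathcal B)$. In particular, if one factor is not scattered, equivalently (Proposition \ref{2.5}) not type I, then $\max\{Sz(\mathcal A),Sz(\mathcal B)\}=\infty$ and the theorem is already proved.

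For the upper estimate I may thus assume both $\mathcal A$ and $\mathcal B$ are type I. Type I algebras are nuclear, so the C*-norm is unique, $\mathcal A\otimes_\beta\mathcal B=\mathcal A\otimes_{\min}\mathcal B$ is again type I, and its spectrum is the topological product $\widehat{\mathcal A\otimes_{\min}\mathcal B}\cong\widehat{\mathcal A}\times\widehat{\mathcal B}$, the irreducible representations being exactly the $\pi\otimes\sigma$ (see \cite{Dixmier,Takesaki}). Writing $X=\widehat{\mathcal A}$, $Y=\widehat{\mathcal B}$ and applying Corollary \ref{4.17} to all three algebras, the inequality $Sz(\mathcal A\otimes_{\min}\mathcal B)\le\max\{Sz(\mathcal A),Sz(\mathcal B)\}$ becomes the purely topological assertion $\Gamma'(i(X\times Y))\le\max\{\Gamma'(i(X)),\Gamma'(i(Y))\}$. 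If $X$ or $Y$ is non-scattered this is trivial, since then $i(X\times Y)=\infty$ because $X\times\{\mathrm{pt}\}$ is a closed copy of $X$; so I assume $X,Y$ scattered, normalize $i(X)\ge i(Y)$, and fix $\alpha$ with $\omega^\alpha\le i(X)<\omega^{\alpha+1}$, so that the right-hand side equals $\Gamma'(i(X))=\omega^{\alpha+1}$. As $\Gamma'$ is non-decreasing and the lower estimate already gives $i(X\times Y)\ge i(X)\ge\omega^\alpha$, it suffices to prove the strict bound $i(X\times Y)<\omega^{\alpha+1}$.

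The decisive step, which I expect to be the main obstacle, is the Cantor--Bendixson rank estimate for products: writing $\rho_X(x)$ for the rank of $x$ (the unique $\beta$ with $x\in X^{(\beta)}\setminus X^{(\beta+1)}$) and $\oplus$ for the natural (Hessenberg) sum, I will prove by transfinite induction on $\gamma$ that
\[
(X\times Y)^{(\gamma)}\subseteq\big\{(x,y)\in X\times Y:\ \rho_X(x)\oplus\rho_Y(y)\ge\gamma\big\}.
\]
The limit case is immediate; for $\gamma=\gamma_0+1$ one observes that $x$ is isolated in $X^{(\rho_X(x))}$ and $y$ in $Y^{(\rho_Y(y))}$, so a product neighbourhood $U\times V$ with $U\cap X^{(\rho_X(x))}=\{x\}$ and $V\cap Y^{(\rho_Y(y))}=\{y\}$ meets $(X\times Y)^{(\rho_X(x)\oplus\rho_Y(y))}$ only in $(x,y)$ (using the induction hypothesis and strict monotonicity of $\oplus$), so the rank cannot exceed $\rho_X(x)\oplus\rho_Y(y)$. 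Consequently every point of $X\times Y$ has rank at most $s_X\oplus s_Y$, where $s_X=\sup_x\rho_X(x)\le i(X)$ and $s_Y=\sup_y\rho_Y(y)\le i(Y)$, whence $i(X\times Y)\le s_X\oplus s_Y+1$. Finally $s_X,s_Y<\omega^{\alpha+1}$, i.e. their Cantor normal forms have leading exponent $\le\alpha$, and the set of ordinals below $\omega^{\alpha+1}$ is closed under $\oplus$; therefore $s_X\oplus s_Y<\omega^{\alpha+1}$ and, $\omega^{\alpha+1}$ being a limit ordinal, $i(X\times Y)<\omega^{\alpha+1}$. Combining with the lower estimate gives $Sz(\mathcal A\otimes_\beta\mathcal B)=\max\{Sz(\mathcal A),Sz(\mathcal B)\}$, and the $w^\ast$-dentability identity follows by the parallel computation described above.
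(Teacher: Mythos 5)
Your proposal is correct and shares the paper's overall skeleton --- dispose of the degenerate and non-scattered cases, reduce to type I factors where the C*-norm is unique, identify $\widehat{\mathcal A\otimes_\beta\mathcal B}$ with $\widehat{\mathcal A}\times\widehat{\mathcal B}$ (the paper cites Guichardet for exactly this), and translate through Corollary \ref{4.17} --- but your combinatorial core is genuinely different from the paper's. The paper proves Lemma \ref{4.19}, the exact identity $(X\times Y)^{(\omega^\alpha\cdot n)}=\bigcup_{r+s=n}X^{(\omega^\alpha\cdot r)}\times Y^{(\omega^\alpha\cdot s)}$, by a double induction on $\alpha$ and $n$ (citing Sahan's thesis), then picks $n$ with $X^{(\omega^\alpha\cdot n)}=Y^{(\omega^\alpha\cdot n)}=\emptyset$ to get $i(X\times Y)\le\omega^\alpha\cdot 2n<\omega^{\alpha+1}$; since the formula is an equality it also gives $i(X\times Y)\ge\omega^\alpha$, so the paper concludes $\Gamma'\bigl(i(X\times Y)\bigr)=\omega^{\alpha+1}$ and needs no Banach-space lower estimate in the scattered case. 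You instead prove the pointwise Cantor--Bendixson rank bound $\rho_{X\times Y}(x,y)\le\rho_X(x)\oplus\rho_Y(y)$ by a single transfinite induction (the successor step, using isolation of $x$ in $X^{(\rho_X(x))}$, of $y$ in $Y^{(\rho_Y(y))}$, and strict monotonicity of the Hessenberg sum in each argument, is sound), deduce $i(X\times Y)\le s_X\oplus s_Y+1<\omega^{\alpha+1}$ from closure of $[0,\omega^{\alpha+1})$ under $\oplus$, and supply the lower bound instead from the isometric slice embeddings $a\mapsto a\otimes b_0$, $b\mapsto a_0\otimes b$ together with Proposition \ref{3.1}(b) --- an embedding the paper invokes only in the non-scattered case. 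Your route is arguably more self-contained (no external product-derivative lemma) at the cost of getting only an inequality for $i(X\times Y)$ where the paper gets the exact value.

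Two small slips, both repairable in a line. First, your parenthetical ``not scattered, equivalently (Proposition \ref{2.5}) not type I'' is false as stated: $C([0,1])$ is type I but not scattered. What Proposition \ref{2.5} actually gives --- and all you use --- is that a non-scattered factor forces $Sz=\infty$, and that scattered implies type I; so after discarding the non-scattered case you may indeed assume both factors scattered, hence type I, and nothing downstream is affected (your belt-and-braces handling of non-scattered $X$ or $Y$ is then redundant). Second, Corollary \ref{4.17} requires infinite-dimensional algebras, so ``applying Corollary \ref{4.17} to all three algebras'' is not literally available when exactly one factor is finite dimensional --- your separately treated degenerate case covers only the situation where both are --- and there the substitution is genuinely wrong for that factor, since $\Gamma'\bigl(i(\widehat{\mathcal A})\bigr)=\omega\neq 1=Sz(\mathcal A)$ when $\dim\mathcal A<\infty$. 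The fix is immediate: if $\dim\mathcal A<\infty$ and $\dim\mathcal B=\infty$ then $\Gamma'\bigl(i(X)\bigr)=\omega\le\Gamma'\bigl(i(Y)\bigr)=Sz(\mathcal B)$, so $\max\bigl\{\Gamma'(i(X)),\Gamma'(i(Y))\bigr\}=\max\{Sz(\mathcal A),Sz(\mathcal B)\}$ and your topological inequality still yields the required upper bound; the paper sidesteps this by handling $\dim\mathcal A<\infty$ at the outset via the linear isomorphism $\mathcal A\otimes_\beta\mathcal B\cong\mathcal B\oplus\cdots\oplus\mathcal B$.
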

In order to proof the Theorem \ref{4.18}, we need the following lemma:

\begin{lemma}\label{4.19}
Let $X$ and $Y$ be topological spaces. $X\times Y$ is the product topological space of $X$ and $Y$. Then for any ordinal $\alpha$ and $n\in\mathbb N$,
\[
(X\times Y)^{(\omega^\alpha\cdot n)}=\bigcup_{r+s=n\atop r,s\geq0}X^{(\omega^\alpha\cdot r)}\times Y^{(\omega^\alpha\cdot s)}.
\]
\end{lemma}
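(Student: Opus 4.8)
The plan is to prove the identity by a double transfinite induction: an outer induction on $\alpha$ and, for each fixed $\alpha$, an inner induction on $n$. Everything reduces to a few computations with Cantor--Bendixson derivatives, so I first record the elementary facts I will use repeatedly. A point $(x,y)$ is isolated in $X\times Y$ precisely when $x$ is isolated in $X$ and $y$ is isolated in $Y$; passing to complements gives the first-derivative formula $(X\times Y)'=(X'\times Y)\cup(X\times Y')$. Since every derived set $T^{(\rho)}$ is closed and a finite union of closed sets is closed, for closed $A\subseteq X$ and $B\subseteq Y$ one checks, by expanding $A=(A\setminus A')\cup A'$ and similarly for $B$ and then deleting the product of the two isolated-point sets, that $(A\times B)'=(A'\times B)\cup(A\times B')$. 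I will also use: (i) the derived set of a finite union splits, and in fact $(A\cup B)^{(\gamma)}=A^{(\gamma)}\cup B^{(\gamma)}$ for every ordinal $\gamma$ --- the successor step is the finite-union fact and the limit step holds because the families $\{A^{(\gamma)}\}$ and $\{B^{(\gamma)}\}$ are nested decreasing; (ii) iterated derivatives add ordinals, $(T^{(\gamma)})^{(\delta)}=T^{(\gamma+\delta)}$; and (iii) if $A,B$ are closed then derivatives of subsets of $A\times B$ computed in the subspace $A\times B$ agree with those computed in $X\times Y$.

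Granting the crucial one-step identity
\[
(X\times Y)^{(\omega^\alpha)}=\big(X^{(\omega^\alpha)}\times Y\big)\cup\big(X\times Y^{(\omega^\alpha)}\big)\qquad(\star)
\]
for a fixed $\alpha$ and all spaces, the inner induction on $n$ is routine, the base $n=0$ being $(X\times Y)^{(0)}=X^{(0)}\times Y^{(0)}$. Using $\omega^\alpha(n+1)=\omega^\alpha n+\omega^\alpha$ and fact (ii), I compute
\[
(X\times Y)^{(\omega^\alpha(n+1))}=\Big(\bigcup_{r+s=n}X^{(\omega^\alpha r)}\times Y^{(\omega^\alpha s)}\Big)^{(\omega^\alpha)}=\bigcup_{r+s=n}\big(X^{(\omega^\alpha r)}\times Y^{(\omega^\alpha s)}\big)^{(\omega^\alpha)},
\]
where the first equality is the inductive hypothesis and the second is fact (i). Applying $(\star)$ to the closed subspaces $X^{(\omega^\alpha r)}$ and $Y^{(\omega^\alpha s)}$ (legitimate by (iii)) together with (ii) turns each summand into $\big(X^{(\omega^\alpha(r+1))}\times Y^{(\omega^\alpha s)}\big)\cup\big(X^{(\omega^\alpha r)}\times Y^{(\omega^\alpha(s+1))}\big)$, and reindexing the resulting union over all pairs with $r'+s'=n+1$ gives exactly $\bigcup_{r'+s'=n+1}X^{(\omega^\alpha r')}\times Y^{(\omega^\alpha s')}$.

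It remains to prove $(\star)$, which is the \emph{heart} of the argument, and I do this by transfinite induction on $\alpha$. For $\alpha=0$ the identity $(\star)$ is just the first-derivative formula recorded above. If $\alpha=\beta+1$, I use that $\{\omega^\beta n:n\in\mathbb N\}$ is cofinal in the limit ordinal $\omega^{\beta+1}$, so $(X\times Y)^{(\omega^{\beta+1})}=\bigcap_{n}(X\times Y)^{(\omega^\beta n)}$, and I substitute the full statement of the theorem at the smaller exponent $\beta$ (available by the outer induction), obtaining $\bigcap_n\bigcup_{r+s=n}X^{(\omega^\beta r)}\times Y^{(\omega^\beta s)}$. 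For a point in this set one chooses for each $n$ a pair $(r_n,s_n)$ with $r_n+s_n=n$; since $r_n+s_n\to\infty$, either $(r_n)$ or $(s_n)$ is unbounded, and nestedness of the derived sequences forces the point into $X^{(\omega^{\beta+1})}\times Y$ or into $X\times Y^{(\omega^{\beta+1})}$, while the reverse inclusion follows from the choices $(r,s)=(n,0)$ and $(0,n)$. The limit case is analogous, using $\omega^\alpha=\sup_{\beta<\alpha}\omega^\beta$ and the already-proved $(\star)$ for each $\beta<\alpha$, so that $(X\times Y)^{(\omega^\alpha)}=\bigcap_{\beta<\alpha}\big[(X^{(\omega^\beta)}\times Y)\cup(X\times Y^{(\omega^\beta)})\big]$; a cofinality argument (if the point leaves $X^{(\omega^{\beta_0})}$ for some $\beta_0<\alpha$, it must remain in every $Y^{(\omega^\beta)}$ with $\beta\ge\beta_0$, hence in $Y^{(\omega^\alpha)}$) yields the desired description.

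The main obstacle is precisely this successor/limit analysis of $(\star)$: the two intersection-of-union expressions collapse to a single clean union only because the derived sequences are nested decreasing and the exponents $\omega^\beta n$ (respectively $\omega^\beta$) are cofinal in $\omega^{\beta+1}$ (respectively $\omega^\alpha$). The remaining work is bookkeeping --- keeping track of whether a derived set is computed in a subspace or in the ambient product, which is controlled by closedness via fact (iii), and checking that the reindexing in the inner induction covers every pair $(r',s')$ with $r'+s'=n+1$.
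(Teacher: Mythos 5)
Your proof is correct and takes essentially the same route as the paper's: an outer transfinite induction on $\alpha$ that reduces everything to the one-step identity $(X\times Y)^{(\omega^\alpha)}=\bigl(X^{(\omega^\alpha)}\times Y\bigr)\cup\bigl(X\times Y^{(\omega^\alpha)}\bigr)$, proved via cofinality of $\{\omega^\beta\cdot n\}_{n\in\mathbb N}$ in $\omega^{\beta+1}$ at successor stages and of $\{\omega^\beta\}_{\beta<\alpha}$ at limit stages, followed by the same inner induction on $n$. You merely spell out the auxiliary facts (splitting of derived sets over finite unions, additivity $(T^{(\gamma)})^{(\delta)}=T^{(\gamma+\delta)}$, and agreement of relative and ambient derivatives on closed subspaces) that the paper's sketch leaves implicit.
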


A proof of the Lemma \ref{4.19} can be found in \cite{Sahan}, for the sake of readability, we give here a sketch proof. 

\begin{proof}
We will be done by transfinite induction on $\alpha$. It is easy to see that
\[
(X\times Y)'=X'\times Y\bigcup X\times Y',
\]
so it is true for $\alpha=0$ and $n=1$. Now we assume for all ordinal $\gamma<\alpha$, conclusion is true. 

If $\alpha$ is a limit ordinal. It is easy to see that
\[
(X\times Y)^{(\omega^\alpha)}=\bigcap_{\gamma<\alpha}(X\times Y)^{(\omega^\gamma)}=\bigcap_{{\gamma<\alpha}}X^{(\omega^\gamma)}\times Y\cup X\times Y^{(\omega^\gamma)}=X^{(\omega^\alpha)}\times Y\bigcup X\times Y^{(\omega^\alpha)}.
\]
Now, we will inductively prove that $(X\times Y)^{(\omega^\alpha\cdot n)}=\bigcup_{r+s=n\atop r,s\geq0}X^{(\omega^\alpha\cdot r)}\times Y^{(\omega^\alpha\cdot s)}$ on $n$. When $n=0$, it is trivially true. Now we let $n=k+1$ and assume that is true for $k$. Then $(X\times Y)^{(\omega^\alpha\cdot k)}=\bigcup_{r+s=k\atop r,s\geq0}X^{(\omega^\alpha\cdot r)}\times Y^{(\omega^\alpha\cdot s)}$. By the induction hypothesis, 
\[
(X\times Y)^{(\omega^\alpha\cdot(k+1))}=\left((X\times Y)^{(\omega^\alpha\cdot k)}\right)^{(\omega^\alpha)}=\bigcup_{r+s=k\atop r,s\geq0}\left(X^{(\omega^\alpha\cdot r)}\times Y^{(\omega^\alpha\cdot s)}\right)^{(\omega^\alpha)}
\]
\[
=\bigcup_{r+s=k\atop r,s\geq0}X^{(\omega^\alpha\cdot(r+1))}\times Y^{(\omega^\alpha\cdot s)}\cup X^{(\omega^\alpha\cdot r)}\times Y^{(\omega^\alpha\cdot(s+1))}=\bigcup_{r+s=n\atop r,s\geq0}X^{(\omega^\alpha\cdot r)}\times Y^{(\omega^\alpha\cdot s)}.
\]
This finishes induction on $n$.

If $\alpha=\beta+1$. By the same method as before, that $(X\times Y)^{(\omega^\beta\cdot n)}=\bigcup_{r+s=n\atop r,s\geq0}X^{(\omega^\beta\cdot r)}\times Y^{(\omega^\alpha\cdot s)}$. It follows that
\[
(X\times Y)^{(\omega^\alpha)}=\bigcap_{n\geq0}\bigcup_{r+s=n\atop r,s\geq0}X^{(\omega^\beta\cdot r)}\times Y^{(\omega^\beta\cdot s)}=X^{(\omega^\alpha)}\times Y\bigcup X\times Y^{(\omega^\alpha)}.
\]
and
\[
(X\times Y)^{(\omega^\alpha\cdot n)}=\bigcup_{r+s=n\atop r,s\geq0}X^{(\omega^\alpha\cdot r)}\times Y^{(\omega^\alpha\cdot s)}.
\]
This finishes induction on $\alpha$.
\end{proof}

\begin{proof}[Proof of Theorem \ref{4.18}]
If $\dim(\mathcal A)=n<\infty$. Then $\mathcal A\otimes_\beta\mathcal B$ is linear isomorphic to $\overbrace{\mathcal B\oplus\cdots\oplus\mathcal B}^{\rm{n~times}}$. So
\[
Sz(\mathcal A\otimes_\beta\mathcal B)=Sz(\mathcal B)=\max\{Sz(\mathcal A),Sz(\mathcal B)\}.
\]
If $\mathcal A$ is not scattered. We choose a $b\in\mathcal B$ satisfying $\Vert b\Vert=1$. Then $\mathcal A$ is linearly isometric to the subspace $\{a\otimes b:a\in\mathcal A\}$ of $\mathcal A\otimes_\beta\mathcal B$, so
\[
Sz(\mathcal A\otimes_\beta\mathcal B)=\infty=\max\big\{Sz(\mathcal A),Sz(\mathcal B)\big\}.
\]

We can suppose without loss of generality that $\mathcal A$ and $\mathcal B$ are infinite dimension scattered C*-algebras. This implies $\mathcal A$ and $\mathcal B$ are type I, therefore
\[
\widehat{\mathcal A\otimes_\beta\mathcal B}=\widehat{\mathcal A\otimes_{\rm{max}}\mathcal B}\simeq\widehat{\mathcal A}\times\widehat{\mathcal B}
\]
by \cite{Guichardet}. We let $\max\left\{\Gamma'\big(i\big(\widehat{\mathcal A}\big)\big),\Gamma'\big(i\big(\widehat{\mathcal B}\big)\big)\right\}=\omega^{\alpha+1}$, then there is $n\in\mathbb N$ such that $\widehat{\mathcal A}^{(\omega^\alpha\cdot n)}=\emptyset$ and $\widehat{\mathcal B}^{(\omega^\alpha\cdot n)}=\emptyset$. By the Lemma \ref{4.19}, it is easy to see that $\left(\widehat{\mathcal A}\times\widehat{\mathcal B}\right)^{(\omega^\alpha\cdot2n)}=\emptyset$. So $i\big(\widehat{\mathcal A}\times\widehat{\mathcal B}\big)\leq\omega^\alpha\cdot2n<\omega^{\alpha+1}$, which implies $\Gamma'\left(i\big(\widehat{\mathcal A}\times\widehat{\mathcal B}\big)\right)=\omega^{\alpha+1}$. Therefore,
\[
Sz(\mathcal A\otimes_\beta\mathcal B)=\max\{Sz(\mathcal A),Sz(\mathcal B)\}
\]
by the Corollary \ref{4.17}. On the other hand,
\[
Dz(\mathcal A\otimes_\beta\mathcal B)=\omega Sz(\mathcal A\otimes_\beta\mathcal B)=\max\{\omega Sz(\mathcal A),\omega Sz(\mathcal B)\}=\max\{Dz(\mathcal A),Dz(\mathcal B)\}.
\]
\end{proof}
\begin{remark}
If $\mathcal A\otimes_\beta\mathcal B=\mathcal A\check{\otimes}\mathcal B$, where is $\mathcal A\check{\otimes}\mathcal B$ is the injective tensor product of $\mathcal A$ and $\mathcal B$, then either $\mathcal A$ of $\mathcal B$ is abelian (e.g. see book \cite[Page 211]{Takesaki}).
\end{remark}
\begin{definition}
	Let $\mathcal A$ be a C*-algebra and $a\in\mathcal A_h$ (the set of self-adjoint elements of $\mathcal A$). Denote by $C^\ast(a)$ the C*-subalgebra of $\mathcal A$ generated by $a$.
\end{definition}
Suppose that $\mathcal A$ be a C*-algebra, is there a commutative C*-subalgebra $\mathcal C$ of $\mathcal A$ such that
\[
Sz(\mathcal A)=Sz(\mathcal C)?~Dz(\mathcal A)=Dz(\mathcal C)?
\] 
When we consider situation for nonseparable C*-algebras. C. Hida and P. Koszmider \cite{HK} show that assuming an additional axiom (introduced by R. Jensen), there is a full noncommutative nonseparable scattered (Definition see \cite[Page 994]{HK}) C*-algebra $\mathcal B$ (of operator in $\mathcal B(\ell_2)$) with no nonseparable commutative subalgebra and with no uncountable irredundant set. By the \cite[Prop. 3.16]{HK}, then $\mathcal B$ contains a C*-subalgebra $\mathcal B_1$ such that $i(\mathcal B_1)=\omega_1=\omega^{\omega_1}$ ($\omega_1$ denote the first uncountable ordinal number). By Corollary \ref{4.17}, this implies $Sz(\mathcal B_1)=Dz(\mathcal B_1)=\omega_1\cdot\omega$. Let $\mathcal D$ is a commutative C*-subalgebra of $\mathcal B$, then $\mathcal D$ is separable and 
scattered, so 
\[
Sz(\mathcal D)\leq Dz(\mathcal D)<\omega_1<Sz(\mathcal B)=Dz(\mathcal B).
\]
The situation for separable C*-algebra is positive. H.X. Lin \cite[Th. 5.4]{Lin} studied the properties of self-adjoint elements on scattered C*-algebras. Inspired by the conclusion and proof method, we now adopt the techniques over there and Theorem \ref{4.16} to show the following theorem.
\begin{theorem}
	Let $\mathcal A$ be a separable C*-algebra. Then there exists $a\in\mathcal A_h$ such that
	\[
	Sz(\mathcal A)=Sz(C^\ast(a))~and~Dz(\mathcal A)=Dz(C^\ast(a))
	\] 
\end{theorem}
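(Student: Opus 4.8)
The plan is to reduce the statement to producing, for each $\mathcal A$, a single self-adjoint $a$ whose singly generated commutative subalgebra $C^\ast(a)=C_0(\sigma(a)\setminus\{0\})$ carries the full Szlenk complexity of $\mathcal A$. Since $C^\ast(a)$ is a C*-subalgebra, hence a closed subspace, Proposition \ref{3.1}(b) already gives $Sz(C^\ast(a))\le Sz(\mathcal A)$ and $Dz(C^\ast(a))\le Dz(\mathcal A)$. Moreover Theorem \ref{4.16} asserts $Dz=\omega Sz$ for every infinite dimensional C*-algebra, so once I arrange $Sz(C^\ast(a))=Sz(\mathcal A)$ the corresponding identity for $Dz$ follows at once (the finite dimensional case being trivial, as then $Sz(\mathcal A)=1$ and any nonzero $a$ works). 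Thus it suffices to find $a\in\mathcal A_h$ with $Sz(C^\ast(a))\ge Sz(\mathcal A)$.

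First I would dispose of the non-scattered case. If $\mathcal A$ is not scattered, then by Proposition \ref{2.5}(f) it contains a copy of $C_0((0,1])$, whose coordinate function is a positive generator; taking $a$ to be the corresponding self-adjoint element gives $C^\ast(a)\cong C_0((0,1])$, which is not scattered, whence $Sz(C^\ast(a))=\infty=Sz(\mathcal A)$ and likewise for $Dz$. So I may assume $\mathcal A$ is scattered and infinite dimensional; by Proposition \ref{2.5} it is type I, and by Corollary \ref{4.17} we have $Sz(\mathcal A)=\Gamma'\big(i\big(\widehat{\mathcal A}\big)\big)=\omega^{\alpha+1}$ with $\omega^\alpha\le i\big(\widehat{\mathcal A}\big)<\omega^{\alpha+1}$ for some countable $\alpha$.

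Here I would reduce to a purely commutative question: it is enough to produce an infinite dimensional commutative C*-subalgebra $\mathcal C\subset\mathcal A$ with $i(\widehat{\mathcal C})\ge\omega^\alpha$. Indeed, by Proposition \ref{2.5}(g) such a $\mathcal C$ is automatically scattered, hence separable, commutative and scattered, so $\widehat{\mathcal C}$ is a countable, locally compact, metrizable space; its one-point compactification is then countable compact metrizable, which by the classical Mazurkiewicz--Sierpi\'nski classification embeds homeomorphically into $\mathbb R$, and choosing the embedding so that the point at infinity lands at $0$ yields a self-adjoint generator $a\in\mathcal C\subset\mathcal A_h$ with $C^\ast(a)=\mathcal C$. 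Then the corollary following Theorem \ref{4.1} gives $Sz(C^\ast(a))=\Gamma'(i(\widehat{\mathcal C}))\ge\omega^{\alpha+1}$, since $\Gamma'(\xi)\ge\omega^{\alpha+1}$ exactly when $\xi\ge\omega^\alpha$; combined with the reverse inequality this forces $Sz(C^\ast(a))=Sz(\mathcal A)$.

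The heart of the argument, and the step I expect to be the main obstacle, is the construction of $\mathcal C$, for which I would follow the technique of H.X. Lin \cite[Th. 5.4]{Lin}. Using the ideal filtration $\{\mathcal A^{(\beta)}\}$ and Theorem \ref{2.7}(b), each successive quotient $\mathcal A^{(\beta+1)}/\mathcal A^{(\beta)}$ is a $c_0$-sum of algebras of compact operators, in which minimal projections are abundant; the plan is to select and lift such projections by transfinite recursion on $\beta<\omega^\alpha$ so as to obtain mutually commuting self-adjoint elements whose joint functional-calculus spectrum is a scattered set whose Cantor--Bendixson derivatives track the $\widehat{\mathcal A}^{(\beta)}$, yielding $i(\widehat{\mathcal C})\ge\omega^\alpha$. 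The delicate points are, first, maintaining commutativity across distinct levels of the filtration, since projections drawn from different subquotients need not commute in the ambient noncommutative algebra, so the selections must be made compatibly; and second, controlling the accumulation structure at limit ordinals so that the derived sets do not collapse prematurely. Both are handled precisely because $\mathcal A$ is separable: at each limit stage one works along a countable cofinal sequence and uses a diagonal/approximation argument to keep the constructed elements inside a single commuting family. This is exactly the feature that fails in the nonseparable Hida--Koszmider examples discussed above, which is why separability is essential to the statement.
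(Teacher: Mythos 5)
Your reductions are all sound and essentially coincide with the paper's: the upper bound $Sz(C^\ast(a))\leq Sz(\mathcal A)$ from Proposition \ref{3.1} (b), the passage from $Sz$ to $Dz$ via $Dz=\omega Sz$ (Theorem \ref{4.16}), the non-scattered case via a copy of $C_0((0,1])$ from Proposition \ref{2.5} (f), and the observation that a separable commutative scattered subalgebra $\mathcal C$ with $i\big(\widehat{\mathcal C}\big)\geq\omega^\alpha$ has countable, locally compact, metrizable spectrum and is therefore singly generated by a self-adjoint element, with $Sz(\mathcal C)=\Gamma'\big(i\big(\widehat{\mathcal C}\big)\big)\geq\omega^{\alpha+1}$. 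But the one step you defer --- the construction of $\mathcal C$ --- is the entire content of the theorem, and the mechanism you propose for it would fail. A ``diagonal/approximation argument \dots\ to keep the constructed elements inside a single commuting family'' is not available: commutativity is an exact algebraic condition, not stable under small perturbations, and nothing in your sketch explains how a projection lifted from the subquotient at level $\beta'$ is to commute with the elements already produced at levels $\beta<\beta'$. Separability does give $i(\mathcal A)<\omega_1$ and countable cofinality at limit stages, but it does not by itself produce commuting lifts, so your recursion does not close as stated.

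The paper resolves exactly this point by a different device, which is why it is worth naming. One inducts on $\alpha=i(\mathcal A)$ and proves the stronger statement that there exists $a\in\mathcal A_h$ with $i(\sigma(a))\geq i(\mathcal A)$. At a limit stage one fixes $\alpha_n\nearrow\alpha$, lifts nonzero projections $\overline p_n\in\mathcal A^{(\alpha_{n+1})}/\mathcal A^{(\alpha_n)}$ to \emph{mutually orthogonal} projections $p_n\in\mathcal A$ --- using \cite[Lem. 5.1]{Lin} and Brown's projection lifting theorem \cite{Brown}, applied inside the corner $(1-q_m)\mathcal A^{(\alpha_{m+2})}(1-q_m)$ with $q_m=p_1+\cdots+p_m$ to enforce orthogonality --- and then takes the inductively obtained elements $a_n$ inside the pairwise orthogonal hereditary corners $p_n\mathcal Ap_n$, which satisfy $i(p_n\mathcal Ap_n)\geq\alpha_n+1$ because $(p_n\mathcal Ap_n)^\wedge$ is homeomorphic to an open subset of $\widehat{\mathcal A}$. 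Then $a_na_m=0$ for $n\neq m$, so $a=\sum_n2^{-n}a_n$ is a single self-adjoint element with $\sigma(a)\supset2^{-n}\sigma(a_n)$: commutativity comes for free from orthogonality, not from any approximation. Moreover, at a successor step $\alpha=\beta+1$ with $\beta=\gamma+1$ there is a second subtlety your sketch does not address: one must take $a=\sum_n2^{-n}(e_n+b_n)$, where the projections $e_n$ shift the spectra so that $\emptyset\neq\sigma(e_n+b_n)^{(\gamma)}\subset(0,2]$; it is precisely this push of the spectra away from $0$ that forces $0\in\sigma(a)^{(\beta)}$ and prevents the ``premature collapse'' of derived sets at the accumulation point $0$ --- the very failure mode you flag at limit stages but do not actually handle. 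Without the orthogonal-corner trick and the $e_n+b_n$ spectral shift, the construction you outline has a genuine gap at its central step.
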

\begin{proof}
	If $\mathcal A$ is not scattered, then $i(\mathcal A)=\infty$. By Proposition \ref{2.5}, there exists $a\in\mathcal A_h$ such that $C^\ast(a)\simeq C_0((0,1])$, so 
	\[
	Sz(\mathcal A)=Sz(C^\ast(a))=\infty.
	\]
	
	If $\mathcal A$ is scattered. In order to conclude proof of this conclusion, we will show that there exists $a\in\mathcal A_h$ such that 
	\[
	i(\sigma(a))\geq i(\mathcal A).
	\]
	Since $\mathcal A$ is a separable scattered C*-algebra, then $\mathcal A^\ast$ is separable by Proposition \ref{2.5}  and Theorem \ref{2.1}. So $\widehat{\mathcal A}$ is countable by \cite[Th. 3.1]{Jensen1977}, this implies $i(\mathcal A)=i(\widehat{\mathcal A})<\omega_1$. We let $i(\mathcal A)=\alpha$. We will be done by transfinite induction on $\alpha$. 
	
	It is trivially true for $\alpha=1$. 
	
	If $\alpha>1$ is limit ordinal, and assume now that it is true for any ordinal less than $\alpha$. We choose a strictly increasing sequence $(\alpha_n)$ ordinals satisfying $\alpha_n\nearrow\alpha$ and a sequence of non-zero projection $\overline{p}_n\in\mathcal A^{(\alpha_{n+1})}/\mathcal A^{(\alpha_n)}$. Let $\phi_n:\mathcal A^{(\alpha_{n+1})}\to \mathcal A^{(\alpha_{n+1})}/\mathcal A^{(\alpha_n)}$ be the canonical homomorphism. By \cite[Lem. 5.1]{Lin} and the projection lifting theorem \cite{Brown}, there is a projection $p_1\in\mathcal A^{(\alpha_2)}$ such that $\phi_1(p_1)=\overline{p}_1$. Assume that mutually orthogonal projections $p_k\in\mathcal A^{(\alpha_{k+1})}$, $k=1,\dots,m$, have been chosen, such that $\phi_k(p_k)=\overline{p}_k$. We now let $q_m=p_1+\cdots+p_m$. Using the projection lifting theorem on $(1-q_m)\mathcal A^{(\alpha_{m+2})}(1-q_m)/\mathcal A^{(\alpha_{m+1})}\simeq\mathcal A^{(\alpha_{m+2})}/\mathcal A^{(\alpha_{m+1})}$, there is a projection $p_{m+1}\in (1-q_m)\mathcal A^{(\alpha_{m+2})}(1-q_m)$ such that $\phi_{m+1}(p_{m+1})=\overline{p}_{m+1}$ and $p_1,\dots,p_m,p_{m+1}$ are mutually orthogonal. By passing induction, we can obtain a mutually orthogonal projections sequence $(p_n)_{n=1}^\infty$ such that $\phi_n(p_n)=\overline{p}_n$. Since $\phi_n(p_n)\neq0$, there is $[H,\pi]\in{\rm hull}'(\mathcal A^{(\alpha_n)}))={\rm hull}'(\ker'(\widehat{\mathcal A}^{(\alpha_n)}))=\widehat{\mathcal A}^{(\alpha_n)}$ such that $\pi(p_n)\neq0$. Because $p_n\mathcal A p_n$ is a hereditary C*-subalgebra of $\mathcal A$ and $(p_n\mathcal A p_n)^\wedge$ is homeomorphic to $\widehat{\mathcal A}\setminus{\rm hull}'(p_n\mathcal A p_n)$ that is a open set of $\widehat{\mathcal A}$. It is easy to see that
	\[
	i(p_n\mathcal A p_n)=i((p_n\mathcal A p_n)^\wedge)\geq\alpha_n+1.
	\]
So $i((p_n\mathcal A p_n)^{(\alpha_n+1)})=\alpha_n+1$. By the induction hypothesis, there is $a_n\in{(p_n\mathcal A p_n)^{(\alpha_n+1)}}_h$, $\Vert a_n\Vert\leq 1$ such that
	\[
	i(\sigma(a_n))\geq i((p_n\mathcal A p_n)^{(\alpha_n+1)})=\alpha_n+1.
	\]
	Now, we let
	\[
	a=\sum_{n=1}^\infty\frac{1}{2^n}a_n.
	\]
	Then $\sigma(a)\supset\frac{1}{2^n}\sigma(a_n)$ for all $n\in\mathbb N$, it follows that $i(\sigma(a))\geq\alpha$.
	
	If $\alpha=\beta+1$, and assume now that it is true for $\beta$. If $\beta$ is a limit ordinal, by the induction hypothesis, there is $a\in\mathcal A^{(\beta)}$ such that $i(\sigma(a))\geq\beta$. Since $\sigma(a)$ is compact, this implies $i(\sigma(a))$ is a successor ordinal. So $i(\sigma(a))\geq\beta+1=\alpha$. If $\beta=\gamma+1$, it is easy to see that $\mathcal A/\mathcal A^{(\gamma)}$ is infinite dimension. So $\mathcal A/\mathcal A^{(\gamma)}$ contain a infinite dimensional commutative C*-subalgebra that it is scattered. It follows that $\mathcal A/\mathcal A^{(\gamma)}$ contains a sequence of mutually orthogonal non-zero projections $(\overline e_n)_{n=1}^\infty$. Let $\phi:\mathcal A\to\mathcal A/\mathcal A^{(\gamma)}$. Same as the previous proof, there is a mutually orthogonal projections sequence $(e_n)_{n=1}^\infty$ such that $\phi(e_n)=\overline e_n$, and $i((e_n\mathcal A e_n)^{(\beta)})=\beta$. By the induction hypothesis, there is $b_n\in{(e_n\mathcal A e_n)^{(\beta)}}_h$, $\Vert b_n\Vert<1$ such that
	\[
	i(\sigma(b_n))\geq i((e_n\mathcal A e_n)^{(\beta)})=\beta.
	\]
	Now, we let
	\[
	a=\sum_{b=1}^\infty\frac{1}{2^n}(e_n+b_n).
	\]
Since $\sigma(e_n+b_n)\subset(0,2]$, $\emptyset\neq\sigma(e_n+b_n)^{(\gamma)}\subset(0,2]$, and $\frac{1}{2^n}\sigma(e_n+b_n)\subset\sigma(a)$. It follows that $0\in\sigma(a)^{(\beta)}$, so
\[
i(\sigma(a))\geq\beta+1=\alpha.
\] 
This finishes our induction.

Now, we choose a $a\in\mathcal A_h$ such that $i(\sigma(a))\geq i(\mathcal A)$. Note that
\[
Sz(C^\ast(a))=Sz(C(\sigma(a))).
\]
So
\[
Sz(C^\ast(a))=\Gamma'(i(\sigma(a)))\geq\Gamma'(i(\mathcal A))=Sz(\mathcal A).
\]
Therefore, $Sz(\mathcal A)=Sz(C^\ast(a))$. On the other hand,
\[
Dz(\mathcal A)=\omega Sz(\mathcal A)=\omega Sz(C^\ast(a))=Dz(C^\ast(a)).
\]
\end{proof}

\section{\large }

\section*{\large Acknowledgment}
The authors would like to thank people in the Functional Analysis seminar of Xiamen University for their helpful conversations on the paper.

\bibliographystyle{amsalpha}

\end{document}